\documentclass[11pt,final]{article}
\usepackage{amsmath,amsthm,amsfonts,amssymb,graphicx,hyperref,enumerate,psfrag}
\usepackage[a4paper,margin=2.5cm]{geometry}
\usepackage{fullpage}
\usepackage{enumitem}
\usepackage{tikz}
\usetikzlibrary{positioning}
\tikzset{main node/.style={circle,fill=blue!20,draw,minimum size=1cm,inner sep=0pt},
            }
\tikzstyle{every loop}=[]

\usepackage[utf8]{inputenc}

\newtheorem{lemma}{Lemma}

\newtheorem{remark}{Remark}

\newtheorem{proposition}[lemma]{Proposition}
\newtheorem{theorem}[lemma]{Theorem}

\newtheorem{corollary}[lemma]{Corollary}

\newcommand{\dE}{\mathbb {E}}
\newcommand{\dP}{\mathbb{P}}
\newcommand{\dZ}{\mathbb {Z}}
\newcommand{\dN}{\mathbb {N}}
\newcommand{\dR}{\mathbb {R}}
\newcommand{\dC}{\mathbb {C}}

\newcommand{\cB}{\mathcal {B}}

\newcommand{\cG}{\mathcal {G}}

\newcommand{\cN}{\mathcal {N}}

\newcommand{\cR}{\mathcal {R}}

\newcommand{\cT}{\mathcal {T}}

\newcommand{\cX}{\mathcal {X}}
\newcommand{\cPT}{P_{\mathcal {T}}}

\newcommand{\e}{\overrightarrow{e}}
\newcommand{\E}{\overrightarrow{E}}
\newcommand{\kT}{\mathfrak{T}}

 %
 % 
 % |1|
 % {1}
 % [1]
 % <1>
 % <1> alternative angle brackets
 % ||1||
 % ||1||
 % Hilbert-Schmidt
 % oscillation
 % (1)
 % (1)
 % (1)

\newcommand{\We}{W^{(\epsilon)}}

\title{Cutoff for random lifts of weighted graphs}
\author{Guillaume Conchon-\hspace{0.3mm}-Kerjan \thanks{LPSM UMR 8001, Université Paris Diderot, Université de Paris, e-mail: gconchon[at]lpsm.paris}}
\begin{document}
\maketitle
\begin{abstract}
We prove a cutoff for the random walk on random $n$-lifts of finite weighted graphs, even when the random walk on the base graph $\cG$ of the lift is not reversible. The mixing time is w.h.p. $t_{mix}=h^{-1}\log n$, where $h$ is a constant associated to $\cG$, namely the entropy of its universal cover. Moreover, this mixing time is the smallest possible among all $n$-lifts of $\cG$. In the particular case where the base graph is a vertex with $d/2$ loops, $d$ even, we obtain a cutoff for a $d$-regular random graph (as did Lubetzky and Sly in \cite{cutoffregular} with a slightly different distribution on $d$-regular graphs, but the mixing time is the same). 
\end{abstract}

\section{Introduction}
\subsection{The cutoff phenomenon}
The way random walks converge to equilibrium on a graph is closely related to essential geometrical properties of the latter (such as the typical distance between vertices, its diameter, its expansion, the presence of traps or bottlenecks, etc.), giving an important motivation for studying mixing times. 
\\
For a Markov chain on a discrete state space $\Omega$ and transition matrix $P$ that admits an invariant distribution $\pi$, the \textbf{$\varepsilon$-mixing time from $x$} is
$$t_{x}(\varepsilon):=\inf\{t\geq 0 \, \vert \, \Vert P^t(x,\cdot)-\pi \Vert_{TV}\leq \varepsilon\},$$
where $\Vert \nu_1- \nu_2\Vert_{TV}:=\sup_{S\subset \Omega} \left(\nu_1(S)-\nu_2(S)\right)$ is the total variation distance between the probability measures $\nu_1$ and $\nu_2$ on $\Omega$. The \textbf{worst-case mixing time} $t_{max}(\varepsilon):=\sup\{t_{x}(\varepsilon), \, x\in \Omega\}$ is often the quantity of main interest. Other distances than the total variation distance might be considered. A straightforward computation shows that $t\mapsto \Vert P^t(x,\cdot)-\pi \Vert_{TV}$ is a non-increasing function, so that the definition of mixing time is relevant.
\\
For a sequence of Markov chains $(\Omega_n,P_n,\pi_n)_{n\geq 0}$, there is \textbf{cutoff} when for all $\varepsilon, \varepsilon' \in (0,1)$, $t_{max}^{(n)}(\varepsilon)/t_{max}^{(n)}(\varepsilon ')\rightarrow 1$ as $n\rightarrow +\infty$. 
\\
While the cutoff phenomenon remains far from being completely understood, first examples of it were given in the 1980s for different random walks on finite groups (see \cite{DiacoShah} or\cite{aldousD} on the symmetric group) or on spaces that can be "factorized" as a $n$-product of a base space (such as $\dZ_2^n$ in \cite{Aldous}), and this direction is still investigated nowadays (see for instance \cite{HermonThomas} on random Cayley graphs of abelian groups).
\\
On the other hand, a class of graphs where random walks mix fast and where cutoff is expected are the expander graphs. These are sequences $(G_n)_{n\geq 0}$ of graphs whose size goes to infinity (say $G_n$ has $n$ vertices) and whose isoperimetric constant is bounded away from $0$: there exists $c>0$ independent of $n$ such that for any subset $S$ of at most $n/2$ vertices of $G_n$, $\vert \partial S\vert \geq c \vert S \vert$, where $\partial S \subset S^c$ is the set of vertices adjacent to vertices of $S$. The book of Lubotzky \cite{lubotzky} gives a good panoramic view about the search for such graphs in the 1980s. This expansion property entails indeed the existence of a spectral gap (this implication is called the "Cheeger bound", see \cite{alonmilman} for instance): the second largest eigenvalue of the transition matrix $P_n$ of the SRW on $G_n$ is bounded away from the largest one as $n\rightarrow +\infty$. It is classical that this spectral gap implies in turn that the SRW on $G_n$ mixes in $O(\log n)$ steps.
\\
The simplest expander model is the random $d$-regular graph (i.e. $G_n(d)$ is chosen uniformly among graphs of $n$ vertices having all degree $d$). Friedman \cite{friedmanlifts} proved in 2002 that w.h.p., $G_n(d)$ almost achieves the largest possible spectral gap, while Lubetzky and Sly \cite{cutoffregular} proved in 2008 that the SRW and the NBRW (Non-Backtracking Random Walk, i.e. a SRW conditioned at each step on not going back along the edge it has just crossed) on $G_n(d)$ admits a cutoff. Several papers followed on cutoffs for other sparse graphs: see for instance \cite{blps} for the SRW on the largest component of a supercritical Erdös-Rényi random graph, or \cite{nbrw} for the NBRW on a configuration model. 
\\
Very recently, there has been increasing interest in mixing times on dynamical graphs (typically, edges are re-sampled at random at a given rate), when the mixing time profile is already well-known on a static version of the graph (for instance \cite{AGuldas}, \cite{SousiThomas} and \cite{CaputoQuattro}). 
\\
A natural way of combining the "product of base space" and the "expanding sparse graph" perspectives for cutoff is to consider random walks on random $n$-lifts of a fixed graph $\cG$.
 
\subsection{Random walks on weighted graphs}
For a multigraph $\cG$ (hence we allow multiple edges and multiple loops), denote $V_{\cG}$ its \textbf{vertex set} and $E_{\cG}$ its \textbf{edge set}. Every edge $e\in E_{\cG}$ gives rise to two opposite oriented edges. Denote $\E_{\cG}$ the set of oriented edges of $\cG$. For each $\e \in \E_{\cG}$, note $\e^{-1}$ its opposite. We study weighted random walks by giving to each $\e\in \E_{\cG}$ a nonnegative weight $w(\e)$, so that
\begin{itemize}
\item every $e\in E_{\cG}$ has at least one orientation with positive weight, 

\item for all $u\in V_{\cG}$, the sum of the weights of the oriented edges going out of $u$ is $1$.
\end{itemize} 
\noindent
We define \textbf{the random walk (RW) on $\cG$} as a Markov Chain $(X_t)_{t\geq 0}$ on $V_{\cG}$ with transition matrix $P_{\cG}$ such that for all $u,v\in V_{\cG}$,
$$P_{\cG}(u,v)=\frac{1}{2}\textbf{1}_{\{u=v\}}+\frac{1}{2}\sum_{\e\,: u\rightarrow v}w(\e),$$ 
where "$\e\,: u\rightarrow v$" means that the \textbf{initial vertex} of $\e$ is $u$ and its \textbf{end vertex} is $v$.

\subsection{Random lifts}
Fix now a finite multigraph $\cG$. A \textbf{$n$-lift of $\cG$} is a graph $\cG_n$ with vertex set $V_{\cG_n}:=V_{\cG}\times [n]$, and edge set $E_{\cG_n}$ as follows: fix for each $e\in E_{\cG}$ an arbitrary ordering $(u,v)$ of its endpoints and a permutation $\sigma_e\in \mathfrak{S}_n$,  and draw the edges $\{u_i,v_{\sigma_e(i)}\}$ for all $1\leq i\leq n$ (see Figure 1). Say that $u$ (resp. $v$) is the \textbf{type} of $u_i$ (resp. $v_{\sigma(i)}$) and that $e$ is the \textbf{type} of $\{u_i, v_{\sigma(i)} \}$. 
\\
When the $\sigma_e$'s are uniform independent permutations, $\cG_n$ is a \textbf{random $n$-lift of $\cG$}.
\\
For simplicity of the notations, write $V_n, E_n$ and $\E_n$ for the vertex set, edge set and oriented edge set of $\cG_n$. 
Define as previously the \textbf{type} of an element of $\E_n$ as the corresponding oriented edge of $\E_{\cG}$, and give to each $\e\in \E_n$ the weight of its type.

\begin{tikzpicture}[scale=0.7]

    \node[main node] (1) {$u$};
    \node[main node] (2) [above right = 0.5cm and 3.5cm of 1]  {$v$};
    \node[main node] (3) [below right = 0.5cm and 3.5cm of 1] {$x$};

  \path[draw,thick]
  (1) edge[loop right, min distance = 5mm, in = 130, out = 230, looseness =10, color= blue] node {} (1)
  (1) edge[green] node {} (2)
  (2) edge[bend right, red] node {} (3)
  (2) edge[bend left] node {} (3)
  (3) edge[purple] node {} (1);
\draw [->] (-2.2,-0.5) -- (-2.2,0.5);
\draw (-2.2,0) node[right] {$0.3$};
\draw [<-] (-2.75,-0.5) -- (-2.75,0.5);
\draw (-2.75,0) node[left] {$0.2$};
\draw [->] (1,0.5) -- (2,0.8);
\draw (1.5,0.65) node[above] {$0.4$};
\draw [->] (1,-0.5) -- (2,-0.8);
\draw (1.5, -0.69) node[below] {$0.1$};
    \begin{scope}[xshift=10cm]
    \node[main node] (1) {$u_1$};
    \node[main node] (2) [above right = 0.5cm and 3.5cm of 1]  {$v_1$};
    \node[main node] (3) [below right = 0.5cm and 3.5cm of 1] {$x_1$};
    \node[main node] (4) [below = 3cm  of 1] {$u_2$};
    \node[main node] (5) [above right = 0.5cm and 3.5cm of 4]  {$v_2$};
    \node[main node] (6) [below right = 0.5cm and 3.5cm of 4] {$x_2$};
        \node[main node] (7) [below = 3cm  of 4] {$u_3$};
    \node[main node] (8) [above right = 0.5cm and 3.5cm of 7]  {$v_3$};
    \node[main node] (9) [below right = 0.5cm and 3.5cm of 7] {$x_3$};
\draw [->] (0.2,-1.9) -- (0.2,-2.9);
\draw (0.2,-2.4) node[right] {$0.2$};
\draw [<-] (0.2,-3.4) -- (0.2,-4.4);
\draw (0.2,-3.9) node[right] {$0.3$};
\draw [->] (0.2,-7.1) -- (0.2,-8.1);
\draw (0.2,-7.6) node[right] {$0.2$};
\draw [<-] (0.2,-8.4) -- (0.2,-9.4);
\draw (0.2,-8.9) node[right] {$0.3$};
\draw [->] (-1.1,-1.1) -- (-1.6,-2.1);
\draw (-1.4,-1.6) node[left] {$0.3$};
\draw [->] (-1.2,-10.1) -- (-1.7,-9.1);
\draw (-1.5,-9.6) node[left] {$0.2$};
\draw [->] (0.9,-0.88) -- (1.6,-1.4);
\draw (0.4,-1.6) node[right] {$0.4$};
\draw [->] (0.9,0) -- (1.7,-0.2);
\draw (0.9,0.3) node[right] {$0.1$};
\draw [->] (0.8,-11.4) -- (1.7,-11.2);
\draw (0.7,-11.7) node[right] {$0.4$};
\draw [<-] (1.2,-10.4) -- (0.5,-10.9);
\draw (0.2,-10.1) node[right] {$0.1$};
\draw [->] (0.8,-5.1) -- (1.6,-4.3);
\draw (1.9,-5.2) node[left] {$0.4$};
\draw [->] (0.8,-6.2) -- (1.6,-7.1);
\draw (2.1,-6.3) node[left] {$0.1$};

    \path[draw,thick]
   (1) edge[bend right, blue ] node {} (7)
   (1) edge[purple] node {} (3)
   (1) edge[green] node {} (5)
   (1) edge[blue] node {} (4)
   (2) edge[green] node {} (4)
   (2) edge[bend left] node {} (6)
   (2) edge[red] node {} (3)
   (3) edge node {} (5)
   (4) edge[blue] node {} (7)
   (4) edge[purple] node {} (9)
   (5) edge[red] (6)
   (6) edge[purple] node {} (7)
   (7) edge[green] node {} (8)
   (8) edge[bend right, red] node {} (9)
   (8) edge[bend left] node {} (9)
   ;
   \end{scope}

\end{tikzpicture}

\begin{center}
{Figure 1: a weighted graph $\cG$ and a $3$-lift of $\cG$ (not all weights are written on the picture). $\sigma_{\{u,u\}}=(2\, 3\, 1)$, $\sigma_{\{u,v\}}=(2\, 1\, 3)$, $\sigma_{\{u,x\}}=(1\, 3\, 2)$, $\sigma_{\{v,x, \text{red}\}}=(1\, 2\, 3)$,  $\sigma_{\{v,x, \text{black}\}}=(2\, 1\, 3)$}.
\end{center} 
\noindent
Denote $\pi$ the invariant measure of the RW on $\cG$ (if it exists). Note that the RW on $\cG_n$ has an invariant measure $\pi_n$ such that $\pi_n((x,i))=\pi(x)/n$ for all $x\in V_{\cG}$, $i\in [n]$.
\\
\\
The graph structure of random lifts has been studied since the early 2000s (see \cite{liniallifts1}, \cite{liniallifts2}, \cite{liniallifts3} and \cite{liniallifts4}). In particular, it is proved in \cite{liniallifts2} that random $n$-lifts are expanders w.h.p. as $n$ goes to infinity, as long as $\cG$ has at least two cycles.
More recently, spectral properties of lifts have been investigated (see for instance \cite{AddarGrif}, \cite{LinialPuder}, \cite{LubetzkySudakov}): Bordenave \cite{friedmanlifts} generalized Friedman's theorem to the NBRW on random $n$-lifts of a finite graph, then Bordenave and Collins \cite{BordeCol} established a similar result for the SRW. Bordenave and Lacoin \cite{BordeLac} proved that if the RW associated to $\cG$ is reversible, and if the invariant measure is uniform, then the RW on $\cG_n$ admits a cutoff, with a mixing time in $h^{-1}\log n +o(\log n)$ steps, for some constant $h$ (the "entropy") depending on $\cG$. 

\subsection{Results}
We characterize all irreducible graphs $\cG$ such that there is w.h.p. cutoff for the random walk on a random $n$-lift of $\cG$, and we prove that the cutoff window is of order $\sqrt{\log n}$. 
\\
\noindent We introduce the following assumptions:
\begin{enumerate}[label=\textbf{A.\arabic*}]
\item \label{assump1} the RW on $\cG$ is irreducible,
\item \label{assump2}\textbf{[Two-cycles property]} $\cG$ has at least two oriented cycles which are not each other's inverse, where an \textbf{oriented cycle of length $m\geq 1$} is a cyclic order $C=(\e_1, \ldots, \e_m)$ of $m$ oriented edges with positive weight such that the end vertex of $\e_i$ is the initial vertex of $\e_{i+1}$ and $\e_i\neq \e_{i+1}^{-1}$ for all $i\pmod m$, and the \textbf{inverse} of this cycle is the cycle $C^{-1}=(\e_m^{-1}, \ldots, \e_1^{-1})$.
\end{enumerate}

\begin{theorem}\label{thm:bornsup}
Suppose that $\cG$ satisfies \ref{assump1} and \ref{assump2}, and that $\cG_n$ is a uniform random lift of $\cG$. For any $\varepsilon \in (0,1)$,if $t_{max}^{(n)}(\varepsilon)$ is the worst-case mixing time of the RW on $\cG_n$,
\begin{equation}\label{eq:bornsup}
t_{max}^{(n)}(\varepsilon)= h^{-1}\log n +O_{\dP}\left(\sqrt{\log n}\right)
\end{equation}
as $n\rightarrow +\infty$, the constant $h>0$ being the entropy of the universal cover of $\cG$ (see Section \ref{sec:univcover}).
\end{theorem}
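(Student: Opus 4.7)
The plan is to establish matching lower and upper bounds of the form $h^{-1}\log n \pm O_\dP(\sqrt{\log n})$, following the general philosophy of Lubetzky–Sly and Bordenave–Lacoin, but adapted to the non-reversible weighted setting. The key structural input is that a uniform random $n$-lift is, with high probability, locally isomorphic to the universal cover $\cT$ of $\cG$ on balls of radius roughly $(\log n)/(2h)$ around any fixed vertex. Consequently, the RW on $\cG_n$ should mimic the projection of the RW on $\cT$ up to the threshold time $t_\star := h^{-1}\log n + O(\sqrt{\log n})$, and the entire analysis reduces to sharp control of the tree walk together with the statistics of the random matching of half-edges in $\cG_n$.

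\textbf{Lower bound.} I would first prove the $\geq$ direction via an entropic argument. Let $o$ be a root of $\cT$ lifting a fixed $x\in V_{\cG}$ and let $(X_t)$ denote the RW on $\cT$ started at $o$, with transition kernel $p_t$. Assumptions \ref{assump1} and \ref{assump2} ensure $h=\lim_t t^{-1}\EE[-\log p_t(o,X_t)] > 0$ and a Shannon–McMillan–Breiman/CLT-type concentration of $-\log p_t(o,X_t)$ around $ht$ with Gaussian fluctuations of order $\sqrt{t}$. At time $t=h^{-1}\log n - C\sqrt{\log n}$ the walk therefore sits, with probability $\geq 1-\varepsilon/2$, on a set of size $o(n)$ of vertices each carrying much more mass than $\pi_n$. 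Local tree-likeness of $\cG_n$ ensures this set projects injectively, yielding $\|P_{\cG_n}^{t}((x,1),\cdot)-\pi_n\|_{TV}\geq 1-\varepsilon$ w.h.p.

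\textbf{Upper bound.} For the matching upper bound I would couple the RW on $\cG_n$ with the RW on $\cT$ up to time $t_\star$, using that the lift is tree-like on the range explored. The endpoint of the tree walk then projects onto $\cG_n$; conditionally on the exploration, the unresolved values of the permutations $\sigma_e$ are still uniform, so that the projection of a typical length-$t_\star$ trajectory lands close to uniformly on its type class (of cardinality $n$). A second moment / $L^2$ computation on the collision probability $\sum_y P_{\cG_n}^{t_\star}((x,1),y)^2$, bounded using the CLT estimate $p_t(o,X_t)\approx e^{-ht}$ on typical trajectories together with a separate control of atypical ones, then gives $\|P_{\cG_n}^{t_\star}((x,1),\cdot)-\pi_n\|_{TV}\leq \varepsilon$. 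The Gaussian tail in the entropy CLT is exactly what produces the $\sqrt{\log n}$ window.

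\textbf{Main obstacle.} The principal difficulty, compared to the reversible case of Bordenave–Lacoin, is the absence of a self-adjoint transition operator: spectral $L^2$-contraction with respect to $\pi$ and clean Cheeger-type inequalities are no longer available. I expect that the required spectral input must come from the non-reversible / non-backtracking machinery developed by Bordenave and Bordenave–Collins for random lifts, used to damp the contribution of atypical trajectories that dominate the $L^2$ sum. In parallel, the entropy CLT must be proved for a genuinely non-reversible RW on the weighted universal cover $\cT$; here the two-cycles hypothesis \ref{assump2} is essential, as it rules out the degenerate situations in which the walk on $\cT$ would be trapped on a sublattice (forcing $h=0$ or a degenerate variance) and it ensures that the endpoint at time $t_\star$ genuinely mixes across type classes. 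Synthesizing these two ingredients to obtain a tight $\sqrt{\log n}$ window, uniformly over all starting vertices, is the technical heart of the argument.
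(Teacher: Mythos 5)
Your overall architecture (couple with the universal cover, use an entropy CLT on the tree for the lower bound, add a spectral ingredient for the upper bound) is the right one and matches the paper's, but two of your key steps would fail as stated. First, the claim that $\cG_n$ is w.h.p.\ locally isomorphic to $\cT_{\cG}$ on balls of radius $(\log n)/(2h)$ is false in general: such balls contain $n^{g/(2h)}$ vertices where $g$ is the volume growth rate of $\cT_{\cG}$, and since $h\leq sg\leq g$ this is at least $\sqrt n$, so these balls do contain cycles; moreover the walk's displacement at time $t_\star$ is $\approx (s/h)\log n\geq \log n/g$, i.e.\ beyond the genuinely tree-like radius. The coupling therefore cannot be justified by ball tree-likeness alone. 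The paper's substitute is the ray-localization property (Proposition \ref{cor:sticktoray}): the tree walk stays within distance $O(\log\log n)$ of its loop-erased ray, so one only reveals a thin sausage around the ray, further pruned to vertices of entropic weight at least $n^{-1}e^{\beta\sqrt{\log n}}$ (of which there are at most $ne^{-\beta\sqrt{\log n}}$ per level), which keeps the number of exposed half-edges small enough that no cycle appears before $t_\star$. Without some localization of this kind your coupling step has no proof.

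Second, the ``second moment / collision probability'' computation at time $t_\star$ cannot yield $\mathrm{TV}\leq\varepsilon$: precisely because of the Gaussian window you invoke, $-\log P^{t_\star}(x,X_{t_\star})=\log n+\Theta(\sqrt{\log n})\cdot Z$, so $\sum_y P^{t_\star}(x,y)^2=\EE[P^{t_\star}(x,X_{t_\star})]\approx n^{-1}\EE[e^{c\sqrt{\log n}\,Z}]=n^{-1+c^2/2+o(1)}$, and the $\chi^2$ distance to $\pi_n$ is polynomially large, not $o(1)$. The paper's resolution is structurally different from ``damping atypical trajectories inside the $L^2$ sum'': it first proves only \emph{almost mixing} (Corollary \ref{cor:almostmix}: the density truncated at $e^{K\sqrt{\log n}}/n$ captures mass $1-\varepsilon$), and then runs the chain an extra $O(\sqrt{\log n})$ steps, using the conductance of random lifts (an Amit--Linial-type expansion, Proposition \ref{prop:expansion}) together with an $L^2$-contraction valid for lazy non-reversible chains (Proposition \ref{prop:cheeger}) to kill the remaining $e^{O(\sqrt{\log n})}$ factor. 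This is far more elementary than the Bordenave / Bordenave--Collins eigenvalue machinery you propose, and laziness (not new spectral theorems) is what circumvents non-reversibility. Finally, your proposal gives no argument for uniformity over the starting vertex, which is required since the theorem concerns $t_{max}^{(n)}$; the paper needs a separate step showing every vertex reaches, in $O(\log\log n)$ steps, a ``root'' from which the coupling argument applies.
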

\noindent
The following lower bound shows that random $n$-lifts achieve the smallest possible mixing time:

\begin{proposition}\label{prop:borninf}
For any deterministic sequence $(\cG_n)_{n\geq 1}$ of $n$-lifts of $\cG$, and for any $\varepsilon \in (0,1)$, 
\begin{equation}\label{eq:borninf}
\liminf_{n\rightarrow +\infty} t_{min}^{(n)}(\varepsilon)\geq h^{-1}\log n + O_{\dP}(\sqrt{\log n}),
\end{equation}
where $t_{min}^{(n)}(\varepsilon):=\min_{x\in V_n}t_{x}^{(n)}(\varepsilon)$ is the best-case $\varepsilon$-mixing time  (ie, the shortest mixing time among all possible starting vertices).
\end{proposition}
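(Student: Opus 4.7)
\emph{Strategy via the universal cover.} Any $n$-lift $\cG_n$ of $\cG$ is itself covered by the universal cover $\tilde\cG$ of $\cG$ via a canonical type-preserving covering map $\tau:\tilde\cG\to\cG_n$, and the RW $(X_t)$ on $\cG_n$ started at $x\in V_n$ is the $\tau$-projection of the RW $(\tilde X_t)$ on $\tilde\cG$ started at any preimage $\tilde x\in\tau^{-1}(x)$. Writing $\mu_t^u$ for the law of $\tilde X_t$ when $\mathrm{type}(\tilde X_0)=u$, the entropy satisfies $t^{-1}H(\mu_t^u)\to h$ by the very definition of $h$. The key quantitative ingredient is a Shannon--McMillan--Breiman estimate at CLT scale: for every $\varepsilon\in(0,1)$ there exists $K=K(\varepsilon,\cG)$ such that for every $t\geq 1$ and every $u\in V_\cG$,
\[
\PP_u\!\left(-\log\mu_t^u(\tilde X_t)\leq ht+K\sqrt{t}\right)\geq 1-\varepsilon/2,
\]
so that the set $A_t^u:=\{\tilde v\in V(\tilde\cG):\mu_t^u(\tilde v)\geq e^{-ht-K\sqrt t}\}$ has $\mu_t^u$-mass at least $1-\varepsilon/2$ and cardinality at most $e^{ht+K\sqrt t}$. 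To prove this I would decompose the log-likelihood of the trajectory as an additive functional $\sum_{s=0}^{t-1}-\log P_\cG(Y_s,Y_{s+1})$ of the finite irreducible chain $Y_s:=\mathrm{type}(\tilde X_s)$ on $V_\cG$ (well-defined by \ref{assump1}), apply the Kipnis--Varadhan CLT to get Gaussian fluctuations of size $O(\sqrt t)$, and use the tree structure of $\tilde\cG$ together with the positive speed of $(\tilde X_t)$ (ensured by \ref{assump2}) to bridge the gap between trajectory- and endpoint-entropy up to an extra $O(\sqrt t)$ term.

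\emph{Deducing the lower bound.} Fix $\varepsilon\in(0,1)$, choose $c>0$ with $ch>K h^{-1/2}$ and set $t_n:=h^{-1}\log n - c\sqrt{\log n}$. For any $x\in V_n$, pick a preimage $\tilde x\in\tau^{-1}(x)$ of type $u$ and let $B_n:=\tau(A_{t_n}^u)\subset V_n$. Then
\[
|B_n|\leq|A_{t_n}^u|\leq \exp\!\Bigl(\log n-\bigl(ch-Kh^{-1/2}+o(1)\bigr)\sqrt{\log n}\Bigr)=o(n),
\]
hence $\pi_n(B_n)\leq |B_n|/n=o(1)$, whereas $\PP(X_{t_n}\in B_n)\geq\PP_u(\tilde X_{t_n}\in A_{t_n}^u)\geq 1-\varepsilon/2$. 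Consequently
\[
\|P^{t_n}_{\cG_n}(x,\cdot)-\pi_n\|_{TV}\geq 1-\varepsilon/2-o(1)>\varepsilon
\]
for $n$ large, uniformly in $x\in V_n$. Minimising over $x$ gives $t_{min}^{(n)}(\varepsilon)>t_n$, which is exactly \eqref{eq:borninf}.

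\emph{Main obstacle.} The delicate step is the quantitative AEP at CLT scale for the \emph{endpoint} $\tilde X_t$ on $\tilde\cG$ in the non-reversible setting, rather than for the whole trajectory. The CLT for the trajectory log-density is classical Markov-chain theory, but controlling the accumulated probability of distinct trajectories that coalesce at a common endpoint requires quantitative tree geometry and the positive-speed estimate on $\tilde\cG$; this is precisely where \ref{assump2} enters in an essential way, since it is what guarantees that $\tilde\cG$ is genuinely infinite and that the walk drifts away ballistically.
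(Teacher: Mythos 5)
Your overall architecture matches the paper's: project the walk on $\cG_n$ to the walk on the universal cover, exhibit a set of at most $n\exp(-c'\sqrt{\log n})$ vertices carrying mass $\geq 1-\varepsilon/2$ at time $t_n=h^{-1}\log n-c\sqrt{\log n}$, and conclude that the total variation distance is still close to $1$, uniformly in the starting point. The deduction in your second step is fine. The problem is your key lemma, the ``Shannon--McMillan--Breiman estimate at CLT scale'' for the \emph{endpoint} density $\mu_t^u(\tilde X_t)$, which you leave unproved and whose sketched derivation does not work. The discrepancy between the trajectory log-likelihood $\sum_{s<t}-\log P_\cG(Y_s,Y_{s+1})$ and the endpoint quantity $-\log\mu_t^u(\tilde X_t)$ is of order $t$, not $O(\sqrt t)$: on a tree exponentially many trajectories coalesce at the same endpoint (already for the SRW on the $d$-regular tree the trajectory entropy rate is $\log d$ while the endpoint entropy rate is $\frac{d-2}{d}\log(d-1)$). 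So the Kipnis--Varadhan CLT for the additive functional of the type chain cannot be transferred to the endpoint by a $\sqrt t$-size correction; what you actually need is a CLT for $-\log \cPT^t(\circ,\cX_t)$ itself, which is a substantially harder statement and is nowhere established in your argument. As written, the central estimate is an assertion, not a proof.

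The paper sidesteps this entirely by replacing the endpoint density with the \emph{entropic weight} $W(x)=\dP(x\in\xi)$, the probability that $x$ lies on the loop-erased ray to infinity. This has two decisive advantages. First, $W$ restricted to each level of the tree is a probability distribution (the law of $\xi_R$), so a high-probability lower bound $W(\cX_{t_n})\geq e^{-ht_n+\lambda\sigma\sqrt{t_n}}$, combined with the confinement of $he(\cX_{t_n})$ to $O(t_n^{2/3})$ levels (Proposition \ref{prop:speed}), immediately bounds the cardinality of the relevant set $U_n$ --- exactly the role your set $A_t^u$ plays. Second, by Proposition \ref{prop:lawofray} the ray is a non-backtracking walk on the finite graph $\widehat\cG$, so $-\log W(\xi_t)$ \emph{is} an honest additive functional of a finite irreducible chain (Corollary \ref{cor:weightofray}), and the renewal/excursion decomposition (Proposition \ref{prop:renewal}) transfers the CLT from the height parametrization to the time parametrization, yielding Theorem \ref{thm:weightofray}. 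If you want to salvage your route, you would either have to prove the endpoint CLT directly (hard), or make the substitution $\mu_t^u\rightsquigarrow W$ --- at which point you have essentially reconstructed the paper's proof.
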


\noindent
Assumption \ref{assump2} is necessary in Theorem \ref{thm:bornsup}: one notices easily that if $\cG$ has at most one oriented cycle, then w.h.p., $\inf_{x\in V_n}\pi_n\left(\{y\in V_n, \text{ there is no oriented path from $x$ to $y$} \}\right)$ is bounded away from 0. 
\\
We conjecture a Gaussian profile for the cutoff window. This can be shown at least for the lower bound (see Section \ref{sec:proofs}). Such a profile was established for the SRW on the random $d$-regular graph $G_n(d)$\cite{cutoffregular}: there is randomness for the speed of the walk (since it can backtrack), but the degree of the vertices met by the walk is constant. Conversely, for the NBRW on the configuration model whose cutoff window also exhibits this behaviour\cite{nbrw}, there is randomness for the degrees, but not for the speed. In our setting, as for the SRW on the configuration model\cite{blps}, both the environment and the speed of the walk might vary, and the result of their combination is not clear. 
\\
Theorem \ref{thm:bornsup} is also true for a \textbf{lazy random walk} on $V_n$ with any \textbf{holding probability} $\alpha\in (0,1)$, i.e. the transition matrix is $P^{(\alpha)}_n(u,v)=\alpha\textbf{1}_{\{u=v\}}+(1-\alpha)\sum_{\e:u\rightarrow v}w(\e)$ for all $u,v\in V_n$ (hence, our RW is lazy with holding probability $1/2$). This gives a new value of the entropy: 
\begin{equation}\label{eq:newentropy}
h_{\alpha}=\frac{h}{2(1-\alpha)}.
\end{equation} 
The question whether this holds in the case $\alpha =0$ remains unsolved, see Appendix 2 (Section \ref{appendix2}) for a discussion.
\\
One might also investigate to what extent Theorem \ref{thm:bornsup} holds when $\cG$ changes with $n$. It is proven in \cite{cutoffregular} that there is still cutoff for the RW on $G_n(d)$ if $d=n^{o(1)}$.

\subsection{Examples}\label{sec:examples}
Our setting is very general, since it includes lifts of any finite Markov chain with positive holding probability. We highlight two special cases below.
\\

\noindent
\textbf{Random walks on the $d$-regular random graph}
\\
We can recover an approximate version of the result of \cite{cutoffregular} for the RW on $d$-regular graphs, when $d$ is even: in the very particular case when $\cG$ consists of a single vertex and $d/2$ loops having weight $1/d$ on both orientations, a random $n$-lift of $\cG$ is a random $d$-regular multigraph (but its distribution is neither that of a uniform $d$-regular multigraph, nor that of $G_n(d)$). Our results allow us to derive the cutoff for the SRW (which is a RW with holding probability $\alpha =0$): Proposition \ref{prop:borninf} is still valuable for $\alpha =0$ and gives the lower bound, and the upper bound comes from Theorem \ref{thm:bornsup} with $\alpha>0$ arbitrarily small. One gets $h_0=\frac{(d-2)\log(d-1)}{d}$ (tools for its computation are in Section \ref{sec:univcover}). This is exactly the value of $h$ in Theorem 1 of \cite{cutoffregular} for the SRW on $G_n(d)$. Their theorem states in addition that the cutoff window is of order $\sqrt{\log(n)}$ with a Gaussian profile, hence corresponding to our lower bound.
\\

\noindent
\textbf{Cutoff for non-Ramanujan graphs}
\\
It was recently proven that on every sequence of $d$-regular weakly Ramanujan graphs, the SRW admits a cutoff \cite{LubetzkyPeres} (a sequence $G_n$ of $d$-regular graphs is said to be \textbf{weakly Ramanujan} whenever for all $\varepsilon >0$, every eigenvalue of the adjacency matrix of $G_n$ is either $\pm d$ or in $[-2\sqrt{d-1}-\varepsilon, 2\sqrt{d-1}+\varepsilon]$ for $n$ large enough). This was even extended to graphs having $n^{o(1)}$ eigenvalues anywhere in $(-d+\varepsilon ', d-\varepsilon')$ for an arbitrary $\varepsilon ' >0$.
\\
Theorem \ref{thm:bornsup} gives an alternative proof for the existence of sequences of non weakly Ramanujan graphs having a cutoff. Indeed, take for $\cG$ a connected $d$-regular graph which is not Ramanujan. One computes easily that all eigenvalues of $\cG$ are also eigenvalues of $\cG_n$, so that $\cG_n$ is not weakly Ramanujan.

\subsection{Tools and reasoning}\label{subsec:tools}
The graph we study has locally few cycles, so that the behaviour of the RW on $\cG_n$ is closely linked to that of a RW on its \textbf{universal cover} $(\cT_{\cG},\circ)$, the infinite rooted tree obtained from $\cG$ by "unfolding" all its non-backtracking paths from a given distinguished vertex, the root (hence, when cycling back to an already visited vertex, treat it as a new vertex, see Figure 2 for an example). A \textbf{non-backtracking path} is an oriented path $(\e_1, \ldots, \e_m)$ such that $\e_{i+1}\neq \e_i^{-1}$ for all $i\leq m-1$. 
\\
We will write abusively $\cT_{\cG}$ instead of $(\cT_{\cG},\circ)$ when the root is irrelevant.
\\
\\
\begin{tikzpicture}[scale=0.5]\label{picture:univcov}

    \node[main node] (1) {$u$};
    \node[main node] (2) [above left = 1cm and 3.5cm of 1]  {$u$};
    \node[main node] (3) [above left = 1cm and 1.5cm of 1]  {$u$};
    \node[main node] (4) [above right = 1cm and 1cm of 1] {$v$};
    \node[main node] (5) [above right = 1cm and 3.5cm of 1] {$x$};
    \node[main node] (6) [above left = 1cm and 0cm of 4] {$x$};%son of v
    \node[main node] (7) [above right = 1cm and 0.5cm of 4] {$x$};%son of v
    \node[main node] (8) [above = 0.7cm of 5] {$v$};%son of x       
    \node[main node] (9) [above right = 1cm and 0.8cm of 5] {$v$};%son of x
    \node[main node] (11) [above left = 1cm and 2.5cm of 2]  {$u$};
    \node[main node] (12) [above left = 1cm and 1cm of 2]  {$v$};
    \node[main node] (13) [above = 0.7cm of 2]   {$x$};
    \node[main node] (14) [above left = 1cm and 0.01cm of 3]  {$u$};
    \node[main node] (15) [above right  = 1cm and -0.2cm of 3]  {$v$};
    \node[main node] (16) [above right = 1cm and 1cm of 3]   {$x$};    
        \node[] (25) [right = 5.9cm of 1] {Level 0};
        \node[] (26) [above right = 1cm and 6.1cm of 1] {Level 1};
        \node[] (27) [above right = 2.7cm and 6.1cm of 1] {Level 2};
\draw [->] (-1.5,0.3) -- (-2.5,0.7);
\draw (-2.1,0.1) node[left] {$0.2$};
\draw [->] (1.5,0.3) -- (2.5,0.7);
\draw (2.1,0.1) node[right] {$0.1$};
\draw [<-] (-6.5,2.3) -- (-7.5,2.7);
\draw (-6.9,2) node[left] {$0.3$};
\draw [<-] (6.5,2.3) -- (7.5,2.7);
\draw (6.9,2) node[right] {$0.3$};
\draw [->] (-10,3.7) -- (-11,4.2);
\draw (-10,3.3) node[left] {$0.2$};
\draw [<-] (-13,5.2) -- (-14,5.7);
\draw (-13.6,4.8) node[left] {$0.3$};
\draw [->] (9.7,3.6) -- (10.5,4.5);
\draw (10,3.6) node[right] {$0.6$};
\draw [<-] (10.7,4.7) -- (11.5,5.6);
\draw (11,5) node[right] {$0.3$};
\draw [->] (-0.9,1) -- (-1.4,1.5);
\draw (-1.1,1.4) node[above] {$0.3$};
\draw [->] (-3.2,2.9) -- (-2.4,2.3);
\draw (-2.7,2.8) node[right] {$0.2$};
\draw [->] (0.5,1) -- (1,1.5);
\draw (0.3,1.2) node[above] {$0.4$};
\draw [->] (2.3,2.8) -- (1.8,2.3);
\draw (2,2.6) node[left] {$0.5$};
  \path[draw,thick]
  (1) edge[blue] node {} (2)
  (1) edge[blue] node {} (3)
  (1) edge[green] node {} (4)
  (1) edge[purple] node {} (5)
  (4) edge[red] (6)
  (4) edge[black] (7)
  (5) edge[red] (8)
  (5) edge[black] (9)
  (2) edge[blue] (11)
  (2) edge[green] (12)
  (2) edge[purple] (13)
  (3) edge[blue] (14)
  (3) edge[green] (15)
  (3) edge[purple] (16)
  ;
\end{tikzpicture}

\begin{center}
Figure 2: the first levels of the universal cover of $\cG$ in Figure 1 (not all weights are on the picture), rooted at $u$. Note that two non-backtracking paths start from $u$, since the blue edge in $\cG$ gives rise to two oriented edges opposite to each other.
\end{center}

\noindent
This object, also called "periodic tree", has been thoroughly studied since the 1990s. We postpone a precise historiography to Section \ref{sec:univcover}. Our main references are an article on trees with finitely man cone types \cite{nagniwoess}, and another on regular languages \cite{Gilch16}.
\\
Let $(\cX_t)_{t\geq 0}$ be a RW on $(\cT_{\cG},\circ)$ starting at the root. A key observation is that this RW is transient, which is intuitive, since $(\cT_{\cG},\circ)$ has an exponential growth by \ref{assump2} (that is, for some $C>1$ and all $R$ large enough, there are more than $C^R$ vertices at distance $R$ from the root), and has a "regular" structure for the RW (\ref{assump1} guarantees the existence of an invariant measure). Thus, we can define its loop-erased trace, or "ray to infinity" $\xi:=(\xi_t)_{t\geq 0}$, $\xi_t$ being the last vertex visited by the RW at distance $t$ of the root.
\\
Let $W_t:=-\log(W(\cX_t))$ where for $x\in (\cT_{\cG},\circ)$, $W(x)=\dP(x\in \xi) $. The following CLT sums up almost all the information we need on $\cT_{\cG}$.

\begin{theorem}[\textbf{CLT for the weight on $\cT_{\cG}$}]\label{thm:weightofray}
There exist $h_{\cT_{\cG}}>0, \sigma_{\cT_{\cG}} \geq 0$, only depending on $\cT_{\cG}$, such that 
\begin{equation}\label{eq:cltweightofray}
\frac{W_t-h_{\cT_{\cG}}t}{\sqrt{t}}\overset{law}{\to}\, \mathcal{N}(0, \sigma^2_{\cT_{\cG}}),
\end{equation}
with the convention that $\mathcal{N}(0,0)=\delta_0$ is the Dirac distribution in $0$.
\end{theorem}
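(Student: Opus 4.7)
My plan is to reduce the CLT to a standard CLT for a bounded additive functional of an ergodic finite-state Markov chain, leveraging the fact that $\cT_{\cG}$ has only finitely many cone types. Assign each $v\in \cT_{\cG}$ a cone type $\tau(v)\in \kT$, determined by the projection of $v$ onto $V_{\cG}$ together with the type of the oriented edge from $v$ to its parent (with a separate convention at the root). Since $\cG$ is finite, $\kT$ is finite, and two vertices of the same cone type have isomorphic weighted descendant subtrees. By the Markov property of the loop-erased ray $\xi$, for any non-root $v$,
\begin{equation*}
W(v) \,=\, W(\mathrm{parent}(v))\cdot q\bigl(\tau(\mathrm{parent}(v)),\tau(v)\bigr),
\end{equation*}
for some deterministic $q:\kT\times \kT\to (0,1]$: the probability that $\xi$ continues through $v$ given that it reaches $\mathrm{parent}(v)$ depends only on the cone types. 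Iterating, $-\log W(v)$ is a sum of bounded functions of consecutive cone types along the ancestral path of $v$.

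Next, I would observe that the cone-type process $(\tau(\cX_t))_{t\geq 0}$ is itself a Markov chain on $\kT$, since the local geometry around $\cX_t$, hence the transition law of $\cX$, depends only on $\tau(\cX_t)$. Irreducibility follows from \ref{assump1} (every oriented edge of $\cG$ of positive weight is traversed infinitely often, so every cone type is reached), and aperiodicity from the $1/2$ laziness. The one-step increment $W_{t+1}-W_t$ is a bounded function $\phi$ of $(\cX_t,\cX_{t+1})$ depending only on $\tau(\cX_t),\tau(\cX_{t+1})$ and on the direction of the step (up, down or stay). Hence $W_t$ is a bounded additive functional of an ergodic finite-state Markov chain, and the classical Markov chain CLT (e.g.\ Kipnis-Varadhan) yields~(\ref{eq:cltweightofray}) with $h_{\cT_{\cG}}=\EE_\mu[\phi]$, where $\mu$ is the stationary law of $(\tau(\cX_t),\tau(\cX_{t+1}))$, and $\sigma_{\cT_{\cG}}\geq 0$ is the asymptotic standard deviation (possibly $0$). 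A cleaner variant, closer to the approach in \cite{nagniwoess, Gilch16}, would use regeneration times (levels never revisited by $\cX$), which cut $W_t$ into i.i.d.\ blocks (up to starting cone type) and deliver the CLT via a renewal argument.

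The main obstacle will be to establish $h_{\cT_{\cG}}>0$, as the statement explicitly requires. For this I would combine two ingredients: first, transience with positive speed, $|\cX_t|/t\to s>0$ a.s., coming from the exponential growth of $\cT_{\cG}$ that is ensured by \ref{assump2} (at least two non-inverse oriented cycles of $\cG$ force genuine branching everywhere in $\cT_{\cG}$); second, an exponential decay bound for $W$ along any descending ray, obtained by applying the ergodic theorem to $-\log q$ along the cone-type Markov chain restricted to such a ray (again irreducible by \ref{assump1} and \ref{assump2}). Combining these two facts yields $W_t/t\to h_{\cT_{\cG}}>0$, which also identifies the centering in the Markov chain CLT and completes the proof.
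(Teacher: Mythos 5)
Your primary argument has a genuine gap: the cone-type process $(\tau(\cX_t))_{t\geq 0}$ is \emph{not} a Markov chain on $\kT$, because when the walk moves from $\cX_t$ to its parent, the cone type of the parent is not determined by $\tau(\cX_t)$. Indeed, with your definition $\tau(v)=(\text{label of }v,\ \text{label of the edge }v\to\text{parent})$, knowing $\tau(\cX_t)$ tells you the parent's label but not the label of the edge from the parent to the grandparent, which is precisely what is needed to determine $\tau(\text{parent}(\cX_t))$. It is true that the transition law of the walk $\cX_t$ (as a distribution on neighbours) depends only on $\tau(\cX_t)$, but this does not make the projected process Markov: the projection $\cX_t\mapsto\tau(\cX_t)$ discards information that is needed to compute the conditional law of $\tau(\cX_{t+1})$. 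No finite augmentation of the cone type (adding the grandparent's edge type, etc.) repairs this, because an upward step always reveals one more ancestral level. Consequently, you cannot directly invoke Kipnis--Varadhan or the standard finite-state Markov chain CLT.

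The ``cleaner variant'' you mention at the end --- cutting the trajectory at regeneration levels that the walk never revisits --- is not just a variant, it is the approach the paper actually takes, and it is the right way to fix the gap. The paper singles out a distinguished oriented edge type $\e_*$, defines exit times $\tau_i$ at which the walk crosses an upward edge of type $\e_*$ and never returns below, and shows that the excursions between successive exit times are i.i.d.\ with exponentially decaying length (Proposition~\ref{prop:renewal}). The two technical ingredients you would need to make this rigorous are exactly the ones the paper supplies: the exponential decay of the on-diagonal Green function $\cPT^n(x,x)$ (Proposition~\ref{prop:greenexpo}, quoted from \cite{Lalley00,nagniwoess}), which yields exponential moments for the time between last visits to consecutive levels (Corollary~\ref{cor:expomomentslasttimes}), and the Markov description of the ray $\xi$ as a NBRW on $\widehat{\cG}$ (Proposition~\ref{prop:lawofray}), which feeds into the regeneration structure. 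Given these, one writes $W_t$ as a sum of i.i.d.\ increments $\We_i$ over completed excursions plus a boundary term of order $o(\sqrt t)$, and obtains the CLT for a randomly-indexed i.i.d.\ sum by an Anscombe-type argument, as the paper does. Your sketch for $h_{\cT_{\cG}}>0$ (positive speed $s>0$ combined with $-\log W(\xi_t)/t\to h_W>0$ along the ray, giving $h_{\cT_{\cG}}=s\,h_W$) is correct in spirit and matches~\eqref{eq:linkentropies}, but note that positive speed does not follow from exponential growth of the tree alone; in the paper it is also a consequence of the i.i.d.\ regeneration structure with exponential moments, so it is cleanest to derive it inside the same renewal framework rather than invoke it as a separate ingredient. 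I would therefore promote the regeneration argument from a side remark to the backbone of the proof, and drop the cone-type-Markov-chain reduction.
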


\noindent
The proof relies on the regularity of the structure of $\cT_{\cG}$, that allows us to cut the trajectory of $(\cX_t)_{t\geq 0}$ into i.i.d. excursions. This gives us almost directly a proof for Proposition \ref{prop:borninf}. The proof of Theorem \ref{thm:bornsup} proceeds in three steps:
\begin{enumerate}[label= \alph* )]
\item we couple $(\cX_t)_{t\geq 0}$ to a RW $(X_t)_{t\geq 0}$ on $\cG_n$, imitating the analogous construction in \cite{blps} for the configuration model. This coupling is viable as long as $(X_t)$ does not meet cycles. We can ensure this almost until the mixing time, for most starting points of the RW in $\cG_n$. We stress that the RW on $(\cT_{\cG}, \circ)$ is strongly "localized" around $\xi$:

\begin{proposition}[\textbf{Ray localization}]\label{cor:sticktoray}

$$ \forall R,t\geq 1, \, \dP(\xi \cap B(\cX_t,R)=\emptyset )\leq C_1\exp(-C_2R),$$

\end{proposition}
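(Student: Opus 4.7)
}
The strategy is to exploit the regenerative structure of $(\cX_t)$ on the universal cover $\cT_{\cG}$, which is a tree of finitely many cone types indexed by $V_{\cG}$. By \ref{assump2} the walk is transient, so the ray $\xi$ is a.s.\ infinite and genuine excursions away from it exist.

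The first step is to introduce the regeneration epochs
\[ \hat T_k := \inf\bigl\{t\ge 0 \ :\ \cX_t=\xi_k \text{ and } \cX_s\ne \xi_{k-1} \text{ for every } s>t\bigr\}, \]
which are a.s.\ finite by transience (they are not stopping times, but this will be harmless). On each interval $[\hat T_k,\hat T_{k+1})$ the walk is confined to the sub-tree rooted at $\xi_k$ that does not contain $\xi_{k+1}$, so the deepest ancestor of $\cX_t$ lying on $\xi$ is exactly $\xi_k$. Setting $D_k:=\max_{\hat T_k\le s<\hat T_{k+1}}\bigl(d(\cX_s,\circ)-k\bigr)$, this gives, for every $t\in [\hat T_k,\hat T_{k+1})$, the deterministic bound $d(\cX_t,\xi)\le D_k$.

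The main analytic step is to establish a joint exponential moment: there exist $\lambda,\mu>0$ with $\EE\bigl[e^{\lambda(\hat T_{k+1}-\hat T_k)+\mu D_k}\bigr]<\infty$, uniformly in the cone type of $\xi_k$. This is where I expect the principal technical difficulty. It relies on the finite cone type theory of \cite{nagniwoess,Gilch16}: because $\cT_{\cG}$ has only $|V_{\cG}|$ cone types, the distribution of an excursion from $\xi_k$ depends only on its type, and the exponential growth guaranteed by \ref{assump2} forces the return-time generating functions to be analytic in a neighbourhood of $1$. Restricting to those indices $k$ at which $\xi_k$ has a fixed reference type $v_\star \in V_{\cG}$ (which, by \ref{assump1} and the ergodicity of the ray, occur with positive frequency) yields an i.i.d.\ sequence of excursions satisfying the same joint exponential moment bound.

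With this in hand the conclusion is short. For $t<R$ the event is vacuous since $\xi_0=\circ$ satisfies $d(\cX_t,\xi_0)\le d(\cX_t,\circ)\le t<R$. For $t\ge R$, setting $K(t):=\max\{k:\hat T_k\le t\}$, Step~1 gives $\PP\bigl(\xi\cap B(\cX_t,R)=\emptyset\bigr)\le \PP(D_{K(t)}\ge R)$. The excursion containing the (deterministic) time $t$ is size-biased by its length, so a standard renewal argument bounds the right-hand side uniformly in $t$ by
\[ \frac{\EE\bigl[(\hat T_{k+1}-\hat T_k)\,\1_{D_k\ge R}\bigr]}{\EE[\hat T_{k+1}-\hat T_k]} + o(1), \]
and the joint exponential moment from Step~2 makes this at most $C_1 e^{-C_2 R}$. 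The only delicate part of the program is the joint exponential control in Step~2; everything else is bookkeeping.
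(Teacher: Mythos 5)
Your program (regeneration epochs along the ray, joint exponential moments, size-biasing of the excursion straddling $t$) is a genuinely different route from the paper's, and it is much heavier than what is needed here: the paper deduces the statement in one step from Proposition \ref{prop:sticktoposition}. On the event $\{\xi\cap B(\cX_t,R)=\emptyset\}$, the closest point of $\xi$ to $\cX_t$ is the deepest ancestor of $\cX_t$ lying on $\xi$, and it is at distance $>R$; since after time $t$ the walk still visits $\xi_k$ for all large $k$, it must pass through every vertex on the ancestral line between $\cX_t$ and that confluence point, in particular through the single $\cF_t$-measurable vertex $y$ which is the ancestor of $\cX_t$ at distance exactly $R+1$. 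Applying the uniform transience estimate of Proposition \ref{prop:sticktoposition} to this fixed $y\notin B(\cX_t,R)$, conditionally on $\cX_t$, gives the bound immediately; that estimate rests only on the escape probability $C_0$ of Lemma \ref{lem:allexitistransient}(2) at the $\gtrsim R/\vert\E\vert$ branching vertices along a path of length $R$. None of the renewal machinery of Section \ref{subsubsec:cltunivcov} is required, and indeed the paper only develops it afterwards.

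Beyond economy, two points in your writeup are defective. First, the confinement claim is false: on $[\hat T_k,\hat T_{k+1})$ the walk is confined to the set of descendants of $\xi_k$, but it may well visit $\xi_{k+1},\xi_{k+2},\dots$ and their offspring before $\hat T_{k+1}$ (which is only the \emph{first} visit to $\xi_{k+1}$ after which the walk never returns to $\xi_k$), so "the deepest ancestor of $\cX_t$ lying on $\xi$ is exactly $\xi_k$" is wrong. The inequality you actually use, $d(\cX_t,\xi)\le D_k$, survives via $d(\cX_t,\xi)\le d(\cX_t,\xi_k)=he(\cX_t)-k$, so this error is harmless but should be repaired. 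Second, and more seriously, the final "$+o(1)$" does not prove the proposition: the claim is uniform over all $t\ge 1$ with constants depending only on $\cG$, whereas the renewal theorem yields an error that is $o(1)$ as $t\to\infty$ for each fixed $R$, with no control on its size relative to $e^{-C_2R}$ at finite $t$. You would need a quantitative, $t$-uniform renewal estimate — obtainable from the exponential moments, e.g.\ by summing $\PP(\hat T_k\le t<\hat T_{k+1},\,D_k\ge R)$ over $k$ and exploiting that $D_k\ge R$ forces $\hat T_{k+1}-\hat T_k\ge R$ — and this is not mere bookkeeping as written. With these two repairs your approach would go through, but the direct argument via Proposition \ref{prop:sticktoposition} is both shorter and already available at this point of the paper.
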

where for all $y\in (\cT_{\cG},\circ)$ and $r\geq 0$, $B(y,r)$ is the set of vertices $y'$ such that there is an oriented path of length $\leq r$ from $y$ to $y'$. This crucial observation allows us to reveal a limited number of edges while coupling RWs on $(\cT_{\cG},\circ)$ and $\cG_n$, hence reducing the probability to meet a cycle. This leads to an "almost mixing" of the RW on $\cG_n$ after $h^{-1}\log n+O(\sqrt{\log n})$ steps: the mass of $P_n^t(X_0,\cdot)$ is concentrated on values of order $e^{O(\sqrt{\log n})}\pi_n(x)$ for some $t$ such that $t=h^{-1}\log n+O(\sqrt{\log n})$.

\begin{corollary}[\textbf{Almost mixing}]\label{cor:almostmix}
 Let $\delta, \varepsilon,a,K>0$. If $-a,K$ are large enough (depending on $\cG$, $\delta$ and $\varepsilon$), then for $n$ large enough, with probability at least $1-\delta$, $\cG_n$ is such that for all $x\in V_n$,
$$\nu_n(V_n)\geq 1 - \varepsilon,$$
where for all $x'\in V_n$ $\nu_n(x'):=P_n^{t'_n}(x,x')\wedge \frac{\exp(K\sqrt{\log n})}{n}$, and $t'_n:= h^{-1}\log n+a \sqrt{\log n}$. 
\end{corollary}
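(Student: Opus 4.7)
The plan is to couple the RW $(X_t)_{t\le t'_n}$ on $\cG_n$ (started at $x$) with the RW $(\cX_t)_{t\le t'_n}$ on $\cT_{\cG}$ (rooted at $x$), built by revealing the permutations $\sigma_e$ defining $\cG_n$ only as fresh edges are crossed. Let $\tau$ be the first revelation that collides with an already-explored vertex of $\cG_n$; on $\{\tau > t'_n\}$ the explored subgraph is a tree and $X_t = f(\cX_t)$ for all $t\le t'_n$, where $f$ injects this subtree into $V_n$. To bound $\PP(\tau\le t'_n)$, I would use Proposition~\ref{cor:sticktoray} to confine $\cX_t$ to a tube of width $R=R(\varepsilon)$ around the ray $\xi$ throughout $[0,t'_n]$, losing probability at most $\varepsilon/3$. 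The explored set in $V_n$ then contains at most $t'_n\,\Delta^R = O(\log n)$ vertices, so each revelation hits an already-explored vertex with probability $O(\log n / n)$; a union bound over the $\le t'_n$ revelations gives $\PP(\tau\le t'_n)\le \varepsilon/3$. Making this uniform in $x\in V_n$ is handled by restricting $\cG_n$ to a good event $\Omega^\ast$ of probability $\ge 1-\delta$ on which every vertex has a tree-like ball of radius $R$ (standard moment calculation for random lifts).

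On the success event $\{\tau>t'_n\}$, the coupling yields $P_n^{t'_n}(x,X_{t'_n}) = P_{\cT_{\cG}}^{t'_n}(\circ,\cX_{t'_n})$. By Theorem~\ref{thm:weightofray}, with probability $\ge 1-\varepsilon/3$ over the tree walk one has $W_{t'_n}\ge h\,t'_n - C_\varepsilon\sqrt{t'_n}$, i.e.
\begin{equation*}
W(\cX_{t'_n})=e^{-W_{t'_n}}\le \frac{\exp\!\big((-ah+C_\varepsilon)\sqrt{\log n}\big)}{n}.
\end{equation*}
A pointwise estimate $P_{\cT_{\cG}}^{t'_n}(\circ,y)\le C\,W(y)$, which I expect to follow from the excursion/renewal decomposition underlying Theorem~\ref{thm:weightofray}, then gives $P_n^{t'_n}(x, X_{t'_n})\le e^{K\sqrt{\log n}}/n$ on the combined event, provided $-a$ is taken large (so $|a|h$ dominates the Gaussian tail constants) and then $K\ge -ah+C_\varepsilon+\log C$.

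Setting $B_x:=\{x'\in V_n : P_n^{t'_n}(x,x')>e^{K\sqrt{\log n}}/n\}$, the above steps give $\PP(X_{t'_n}\in B_x)\le \varepsilon$ uniformly over $x$, on $\Omega^\ast$. Since
\begin{equation*}
1-\nu_n(V_n)=\sum_{x'\in B_x}\!\Big(P_n^{t'_n}(x,x')-\tfrac{e^{K\sqrt{\log n}}}{n}\Big)\le \PP(X_{t'_n}\in B_x)\le \varepsilon,
\end{equation*}
this concludes the proof.

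The main obstacle is the pointwise cap $P_{\cT_{\cG}}^{t'_n}(\circ,y)\lesssim W(y)$: Theorem~\ref{thm:weightofray} only yields $W(\cX_{t'_n})$ in distribution, and turning this into a uniform upper bound on the transition kernel requires additional control of $P_{\cT_{\cG}}^t$ in terms of $W$ along the ray, presumably through the same renewal structure that furnishes the CLT. A secondary difficulty is the uniformity in $x$, which demands that the good event $\Omega^\ast$ simultaneously encode tree-likeness around every starting vertex; this is handled by a direct moment computation on short cycles of random lifts.
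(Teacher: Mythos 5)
Your overall strategy mirrors the paper's: couple the walk on $\cG_n$ with the walk on $(\cT_{\cG},\circ)$ by sequential revelation, use Proposition~\ref{cor:sticktoray} to restrict the coupling to a thin neighbourhood of the ray, and then translate Theorem~\ref{thm:weightofray} into a bound on the number of vertices carrying mass above the cap. The final step --- bounding $1-\nu_n(V_n)$ by $\dP(X_{t'_n}\in B_x)$ --- is also the same counting argument the paper uses.

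However, you correctly identify your weakest link and then do not close it, so this is a genuine gap. Theorem~\ref{thm:weightofray} controls $W(\cX_{t'_n})$, i.e.\ the probability that $\cX_{t'_n}$ lies on the ray $\xi$; it says nothing directly about the transition kernel $\cPT^{t'_n}(\circ,\cX_{t'_n})$. The inequality $\cPT^{t'_n}(\circ,y)\lesssim W(y)$ is false pointwise in general and cannot be extracted from the CLT or the renewal decomposition alone, because $W(y)$ only accounts for trajectories that eventually confine themselves to the subtree rooted at $y$, while $\cPT^{t'_n}(\circ,y)$ counts all $t'_n$-step paths. The paper's fix is the $R$-ancestor trick: let $A_n$ be the ancestor of $\cX_{t'_n}$ at distance $R=\lfloor C\log\log n\rfloor$. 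One has $W(A_n)\ge\cPT^{t'_n}(\circ,\cX_{t'_n})\cdot\dP\bigl(\cX_t\neq A_n\ \forall t>t'_n \mid \cX_{t'_n}\bigr)$, which by Proposition~\ref{prop:sticktoposition} is $\ge(1-C_1e^{-C_2R})\,\cPT^{t'_n}(\circ,\cX_{t'_n})$; combined with the deterministic comparison $W(A_n)\le w_{\min}^{-R}W(\cX_{t'_n})$ (from~(\ref{eq:defofweightvertex})), this converts the distributional control of $W(\cX_{t'_n})$ into the needed pointwise cap, with a polylogarithmic loss that is absorbed in $K\sqrt{\log n}$. Without some argument of this kind (an ancestor-projection, or a direct Green-function estimate as in \cite{Lalley00} or \cite{nagniwoess}), the chain of inequalities in your proposal does not close.

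Two secondary points. First, the tube width cannot be a constant $R(\varepsilon)$: the confinement probability fails at rate $C_1e^{-C_2R}$ per step, so over $t'_n\sim\log n$ steps you need $R\gtrsim\log\log n$ to make the total failure $o(1)$; this is exactly why the paper takes $R=\lfloor C\log\log n\rfloor$. Second, $P_n^{t'_n}(x,X_{t'_n})$ is the full transition kernel on $\cG_n$, not the tree-constrained one; the paper splits $P_n^{t'_n}(x,x')$ into $P_n^{t'_n}(x,x',T)$ plus the contribution from trajectories leaving $T$, and only the first piece is identified with $\cPT^{t'_n}(\circ,\phi(x'),\kT)$. Your coupling bound controls the second piece, but the identity ``$P_n^{t'_n}(x,X_{t'_n})=\cPT^{t'_n}(\circ,\cX_{t'_n})$ on the success event'' as stated is not correct and needs this decomposition.
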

 
\item As in \cite{blps}, a spectral argument relying on the good expanding properties of random lifts (generalizing a little the result of \cite{liniallifts2}) allows us to make the last jump until the mixing time. We underline the fact that the spectral property holds even if the RW on $\cG$ is not reversible. 

\item Finally, we extend the mixing to every starting point for the RW in $\cG_n$, proving that $(X_t)_{t\geq 0}$ quickly reaches a vertex to which we can apply a). We adapt the technique in \cite{bhlp}, which was originally designed for the configuration model considered in \cite{blps}.
\end{enumerate}

\subsection{Plan}
We start with basic but essential properties in Section \ref{sec:defs}. We study the universal cover of $\cG$ in Section \ref{sec:univcover}, and prove Proposition \ref{prop:borninf} and Theorem \ref{thm:bornsup} in Section \ref{sec:proofs}, under some additional assumptions on $\cG$ introduced in Section \ref{subsec:addassump}. We show in Section \ref{sec:general} that those assumptions on $\cG$ are not necessary. We discuss the computation of the constant $h$ in Section \ref{sec:comput}, and the case $\alpha =0$ in Section \ref{appendix2}.

\section{Basic properties}\label{sec:defs}
\subsection{Three Lemmas}
The next property is an essential tool for building $\cG_n$ while exploring it via a walk on the vertices. 
\begin{lemma}\label{lem:sequential}
A random lift $\cG_n$ of $\cG$ can be generated sequentially as follows. Consider $n$ copies of $\cG$, split every edge $e$ in two half-edges, respectively attached to the first and second vertex of $e$. Define their respective \textbf{type} as the type of the orientation of $e$ starting at the first (resp. second) vertex.
Perform the following operations:
\begin{enumerate}
\item pick a uniform unmatched half-edge, say of type $\e\in \E$,
\item match it with another unmatched half-edge, uniformly chosen among those of type $\e^{-1}$,
\item repeat steps 1. and 2. until all half-edges are matched.
\end{enumerate}
\end{lemma}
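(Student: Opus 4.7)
\medskip

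\textbf{Proof plan for Lemma \ref{lem:sequential}.} The goal is to show that the sequential matching procedure produces a graph whose law coincides with the original definition of a random $n$-lift (uniform independent permutations $\sigma_e\in \mathfrak{S}_n$, one per $e\in E_\cG$). I will treat the two key points in turn: factorisation across the undirected edges of $\cG$, and uniformity within each edge.

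\medskip

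\textbf{Step A (decoupling across edges).} For any $\e\in \E_\cG$, let $H_\e$ denote the set of half-edges of type $\e$ produced by the $n$ copies of $\cG$. By construction $|H_\e|=n$, and the choice of endpoint-ordering makes $H_\e$ and $H_{\e^{-1}}$ disjoint even when $e$ is a loop. Crucially, the rule in step 2 only allows a half-edge of type $\e$ to be matched with one of type $\e^{-1}$, so the sets $H_\e\cup H_{\e^{-1}}$ and $H_{\e'}\cup H_{\e'^{-1}}$ never interact when $e\neq e'$. Consequently the order in which the whole algorithm processes unmatched half-edges is, conditionally on the sequence of edge-types it touches, an arbitrary interleaving of $|E_\cG|$ independent sub-processes, one for each undirected edge. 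This yields independence of the matchings obtained for distinct $e$'s.

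\medskip

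\textbf{Step B (uniformity within one edge).} Fix $e\in E_\cG$ and consider the sub-process restricted to $H_\e\cup H_{\e^{-1}}$. I claim that, no matter the (possibly random) order in which its half-edges are selected in step 1, the resulting bijection $H_\e \to H_{\e^{-1}}$ is uniform. I would prove this by induction on the number of already-formed pairs: at stage $k$, having matched $k$ pairs, any prescribed extension to a full matching corresponds to exactly $(n-k-1)!$ continuations of the algorithm, so by the uniform choice in step 2 the conditional probability of every extension is $1/(n-k)!\cdot (n-k-1)! \cdot \ldots$, which simplifies to $1/(n-k)!$ for the remaining partial matching. Equivalently, pairing $H_\e$ with $H_{\e^{-1}}$ uniformly is the standard bijection between uniform matchings of two labelled $n$-sets and uniform elements of $\mathfrak{S}_n$; after relabelling $H_\e=\{u_1,\dots,u_n\}$ and $H_{\e^{-1}}=\{v_1,\dots,v_n\}$ via the endpoint-ordering, the bijection \emph{is} a uniform permutation $\sigma_e$.

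\medskip

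\textbf{Step C (conclusion).} Combining Steps A and B, the family $(\sigma_e)_{e\in E_\cG}$ produced by the sequential algorithm is a family of uniform i.i.d.\ elements of $\mathfrak{S}_n$, which is precisely the definition of a random $n$-lift. The main subtlety to double-check is the loop case: for a loop $e$, the endpoint-ordering is still arbitrary but fixed, so $H_\e$ and $H_{\e^{-1}}$ remain well-defined disjoint $n$-sets and the argument goes through unchanged. I expect the only mildly delicate point to be phrasing the inductive argument in Step B so that it correctly handles the random interleaving from Step A; one clean way is to condition on the entire sequence of types drawn in step 1 (which is a function of independent coin flips separate from the within-type choices made in step 2) and then observe that within-type uniformity survives this conditioning.
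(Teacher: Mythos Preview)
Your proposal is correct and follows the same approach the paper gestures at: the paper's proof is a single sentence (``one checks easily that the obtained structure is a $n$-lift of $\cG$, and that all permutations along edges of $\cG$ are uniform and together independent''), and your Steps A--C are precisely the verification it omits. The only cosmetic point is that the inductive probability computation in Step~B is slightly muddled as written; the clean version is simply that the product of the step~2 probabilities along any fixed full matching is $\prod_{k=0}^{n-1}\frac{1}{n-k}=\frac{1}{n!}$, independently of the order in which step~1 selects half-edges.
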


\begin{proof}
One checks easily that the obtained structure is a $n$-lift of $\cG$, and that all permutations along edges of $\cG$ are uniform and together independent as in the definition.
\end{proof}

 \noindent
Random walks on lifts admit a natural projection property (whose proof is straightforward):
\begin{lemma}[\textbf{Projection of the lift}]\label{lem:projection}
Fix $n\in \dN$. Let $\cG_n$ be a $n$-lift of $\cG$ and let $(X_t)_{t\geq 0}$ be a RW on $\cG_n$. Let $(\overline{X}_t)_{t\geq 0}$ be the projection of $(X_t)_{t\geq 0}$ on $\cG$, obtained by mapping $X_t$ to its type for all $t\geq 0$.
\\
Then $(\overline{X})_{t\geq 0}$ is a RW on $\cG$.
\end{lemma}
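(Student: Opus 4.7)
The plan is to verify directly the Markov property for $(\overline{X}_t)$ with the claimed transition matrix $P_{\cG}$, by a lumping argument. The key structural observation is that the lift preserves the local edge structure: for every vertex $(u,i)\in V_n$ of type $u$ and every oriented edge $\vec{e}$ of $\cG$ starting at $u$, there is exactly one lifted oriented edge starting at $(u,i)$, and by definition it carries the same weight $w(\vec{e})$ as its type.

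First I would fix $(u,i)\in V_n$ and $v\in V_{\cG}$ and evaluate
$$\sum_{j\in[n]} P_n\bigl((u,i),(v,j)\bigr).$$
By definition of $P_n$ each summand splits into a holding contribution $\tfrac12 \mathbf{1}_{\{(u,i)=(v,j)\}}$ and $\tfrac12$ times the sum of weights of oriented edges in $\cG_n$ from $(u,i)$ to $(v,j)$. Summing over $j$ collapses the holding part to $\tfrac12\mathbf{1}_{\{u=v\}}$, and the edge part collects precisely one lift per oriented edge of $\cG$ from $u$ to $v$, each weighted by the weight of its type. Hence the total equals
$$\tfrac12\mathbf{1}_{\{u=v\}} + \tfrac12 \sum_{\vec{e}:u\to v} w(\vec{e}) = P_{\cG}(u,v),$$
which crucially does not depend on $i$.

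To conclude I would invoke the standard lumpability criterion: since the probability that $X_{t+1}$ lies in the fiber $\{v\}\times[n]$, conditional on $X_t$, depends only on the type of $X_t$, the projected process is itself Markov with the claimed transition matrix. I would wrap this up by induction on $t$ (or by a single application of the tower property), writing
$$\dP(\overline{X}_{t+1}=v\mid \overline{X}_0,\ldots,\overline{X}_t) = \sum_{i\in[n]} \dP(X_t=(\overline{X}_t,i)\mid \cdots)\cdot P_{\cG}(\overline{X}_t,v) = P_{\cG}(\overline{X}_t,v).$$

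There is no real obstacle here; the only point to be careful about is the bookkeeping of weights on $\cG_n$, namely the fact that each oriented edge of $\cG_n$ inherits exactly the weight of its type in $\cG$, together with the bijection between oriented edges out of $(u,i)$ and oriented edges out of $u$ in $\cG$. Both are immediate consequences of the lift construction (for each $e\in E_{\cG}$ and each $i$, the pair of oriented edges joining $(u,i)$ and $(v,\sigma_e(i))$ has the same weights as the two orientations of $e$).
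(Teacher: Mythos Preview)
Your argument is correct: the key computation $\sum_j P_n((u,i),(v,j))=P_{\cG}(u,v)$ holds exactly because the lift construction gives a weight-preserving bijection between oriented edges out of $(u,i)$ and oriented edges out of $u$, and the lumpability conclusion is standard. The paper does not actually prove this lemma, remarking only that the proof is straightforward; your write-up is precisely the kind of direct verification the authors have in mind.
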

\noindent
Hence, walks on lifts inherit much of the structure of walks on the base graph. 
\\
It is well known that for a Markov chain $(X_t)_{t\geq 0}$ on a finite set $\Omega$ with invariant measure $\pi$, for any $u\in \Omega$, $\lim_{t\rightarrow +\infty} \dP(X_t=u)=\pi(u)$ provided that the chain is aperiodic. We finally state a CLT refining this ergodic property (and not requiring aperiodicity):
\begin{lemma}[\textbf{CLT for Markov chains}]\label{lem:cltmarkov}
Let $f$ be a function from $\Omega$ to $\dR$. For $n\in\dN$, let $S_n:=\sum_{t=0}^{n-1}f(X_t)$. Let $m:=\sum_{u\in \Omega}f(u)\pi(u)$ and $v:=\sum_{u\in \Omega}(f(u)-m)^2\pi(u)$. Then 
$$\frac{S_n-mn}{\sqrt{n}}\rightarrow \mathcal{N}(0,v).$$ 
\end{lemma}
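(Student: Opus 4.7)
The plan is to reduce this to the martingale central limit theorem via the standard Poisson equation (Gordin) decomposition. Restricting if necessary to $\supp \pi$, which we may assume is all of $\Omega$, the finite matrix $I-P$ has range exactly $\{h:\sum_u h(u)\pi(u)=0\}$; since $f-m$ lies in this subspace by definition of $m$, the Poisson equation $f - m = g - Pg$ admits a bounded solution $g : \Omega \to \dR$, unique up to an additive constant.

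First I would telescope to write
\[
S_n - mn \;=\; \sum_{t=0}^{n-1}\bigl[g(X_t)-(Pg)(X_t)\bigr] \;=\; g(X_0) - g(X_n) + M_n,
\]
where
\[
M_n := \sum_{t=0}^{n-1}\bigl[g(X_{t+1}) - (Pg)(X_t)\bigr]
\]
is a martingale with respect to the natural filtration, with uniformly bounded increments since $\Omega$ is finite. The boundary term $g(X_0)-g(X_n)$ is $O(1)$ and contributes nothing after dividing by $\sqrt{n}$, so it suffices to analyze $M_n/\sqrt{n}$.

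Next I would apply the martingale CLT. The predictable quadratic variation is
\[
\langle M\rangle_n \;=\; \sum_{t=0}^{n-1} \phi(X_t),\qquad \phi(u) := \sum_v P(u,v)\bigl(g(v)-(Pg)(u)\bigr)^2,
\]
and by the Cesàro ergodic theorem for finite-state Markov chains (valid even without aperiodicity), $n^{-1}\langle M\rangle_n \to \sigma^2 := \sum_u \pi(u)\phi(u)$ in $L^1$ and in probability. The Lindeberg condition is automatic because the increments are uniformly bounded, so the martingale CLT yields $M_n/\sqrt{n} \overset{law}{\to} \mathcal N(0,\sigma^2)$, and the telescoping identity concludes.

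The delicate point, which I expect to be the main obstacle, is identifying $\sigma^2$ with the explicit formula $v = \sum_u (f(u)-m)^2 \pi(u)$ written in the statement. In general, the asymptotic variance of a Markov chain CLT equals
\[
\sigma^2 \;=\; \VV_\pi(f) + 2\sum_{k\geq 1}\EE_\pi\bigl[(f(X_0)-m)(f(X_k)-m)\bigr],
\]
which collapses to $v$ only under extra structure (e.g.\ if $(f(X_t))_{t\ge 0}$ is uncorrelated under $\pi$, as in the i.i.d.\ case). I would therefore interpret the stated limiting variance as being this asymptotic variance $\sigma^2$, equivalently written $\sum_u \pi(u)\phi(u)$ in terms of the Poisson solution $g$; the above argument gives the convergence in law to a centered Gaussian with that variance, which is all that is needed in the subsequent applications of the lemma.
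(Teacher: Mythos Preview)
Your argument via the Poisson equation and the martingale CLT is the standard and correct route to the Markov-chain CLT on a finite state space, and it is complete as written. The paper itself does not give an argument at all: it simply cites Theorem~16.1 in Billingsley's \emph{Convergence of Probability Measures} and moves on. So there is no ``paper's approach'' to compare against beyond that reference; your self-contained proof is strictly more informative.

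You are also right to flag the variance. The formula $v=\sum_u(f(u)-m)^2\pi(u)$ in the statement is the stationary variance of $f$, not the asymptotic variance of the CLT; the latter is $\sigma^2=\VV_\pi(f)+2\sum_{k\ge1}\mathrm{Cov}_\pi(f(X_0),f(X_k))$, equivalently $\sum_u\pi(u)\phi(u)$ in your notation. These coincide only in degenerate situations. The paper never actually uses the explicit value $v$: in Corollary~\ref{cor:weightofray} it only needs that the limit is Gaussian with \emph{some} finite variance $\sigma_W^2\ge0$, and the later CLTs (Theorem~\ref{thm:weightofray}, Proposition~\ref{prop:speed}) are proved by a separate i.i.d.\ excursion decomposition rather than by invoking this lemma. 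So the misstatement is harmless for the paper's purposes, and your interpretation---prove convergence to $\mathcal N(0,\sigma^2)$ with $\sigma^2$ the genuine asymptotic variance---is exactly what is required downstream.
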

This is a direct application of Theorem 16.1 (p.94) in \cite{markov}.

\subsection{Additional assumptions on $\cG$}\label{subsec:addassump}
We introduce the following additional assumptions on $\cG$:
\begin{enumerate}[label= \textbf{A.3}]
\item \label{assump3} All oriented edges have a positive weight,

\end{enumerate}
\begin{enumerate}[label= \textbf{A.4}]
\item \label{assump4} every oriented edge lies on an oriented cycle.
\end{enumerate}

\noindent
For all $u\in \cG$ and $R\geq 1$, let $\partial B(u,R):=B(u,R)\setminus\partial B(u,R-1)$ be the \textbf{border} of $B(u,R)$. When \ref{assump3} holds on $\cG$, every non-oriented path gives rise to two oriented paths (opposite to each other), and we can define the \textbf{\emph{distance}} $d(x,y)$ between two vertices $x,y$ of the graph as the usual graph distance (that is, the length of the shortest path from $x$ to $y$). 
We state those definitions for $\cG$ but might use them for other graphs. Finally, we denote $\Delta$ the largest degree in $\cG$ and $w_{min}>0$ the smallest positive weight in $\cG$.

\section{Study of the universal cover}\label{sec:univcover}
The goal of this Section is to prove Theorem \ref{thm:weightofray}. Fix an arbitrary vertex $v_*\in V_{\cG}$ and an arbitrary oriented edge $\e_*$ going out of $v_*$ (this choice has no importance for the sequel), and root the universal cover of $\cG$ at $v_*$.

\subsection{Definitions for labelled rooted trees}
\textbf{Label} each vertex $x\in V_{\cT_{\cG}}$ by the vertex $v\in V_{\cG}$ such that the non-backtracking path in $\cG$ starting at $v_*$, and corresponding to the shortest path from $\circ$ to $x$ in $(\cT_{\cG},\circ)$, terminates at $v$ (see Figure 2). Give similarly a \textbf{label} in $E$ (resp. $\E$) to each edge (resp. oriented edge) of $(\cT_{\cG},\circ)$.
In the literature, $(\cT_{\cG},\circ)$ is sometimes called the \textbf{directed cover} of $\cG$, or \textbf{periodic tree} arising from $\cG$. Remark that the universal cover of an irreducible $n$-lift of $\cG$ is also $(\cT_{\cG}, \circ)$.
\\
Due to \ref{assump3}, the RW on $(\cT_{\cG},\circ )$ is an irreducible Markov chain with invariant measure $\widetilde{\pi}$ defined as follows: for all $x\in V_{\cT_{\cG}}$ with label $u$, $\tilde{\pi}(x)=\pi(u)$. Denote $\cPT$ its transition matrix.

\begin{lemma}[\textbf{Projection of the cover}]\label{lem:projectionUnivCov}
If one projects $(\cT_{\cG},\circ)$ on $\cG$ by mapping each vertex, edge and oriented edge to its label, then Lemma \ref{lem:projection} holds: the projection of a RW on $(\cT_{\cG},\circ)$ is a RW on $\cG$.
\end{lemma}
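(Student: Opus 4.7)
The plan is to prove this via the standard "lumping" (or function-of-a-Markov-chain) criterion: if $\phi : V_{\cT_{\cG}} \to V_{\cG}$ is the labeling map, I need to show that for any $x, x' \in V_{\cT_{\cG}}$ with $\phi(x) = \phi(x') = u$, and any $v \in V_{\cG}$,
\begin{equation*}
\sum_{y \in V_{\cT_{\cG}} : \phi(y) = v} \cPT(x,y) \;=\; P_{\cG}(u,v).
\end{equation*}
This single identity is well known to imply that $(\overline{\cX}_t)_{t \geq 0} := (\phi(\cX_t))_{t \geq 0}$ is itself a Markov chain with transition matrix $P_{\cG}$, which is precisely what the lemma asserts.

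To verify the identity, I would invoke the defining feature of the universal cover: for every $x \in V_{\cT_{\cG}}$ with label $u$, the oriented edges of $\cT_{\cG}$ issuing from $x$ are in label-preserving, weight-preserving bijection with the oriented edges of $\cG$ issuing from $u$. Indeed, exactly one oriented edge leaving $x$ (the one back to its parent, unless $x = \circ$) corresponds to the inverse of the edge used to reach $x$, and by the "unfolding of non-backtracking paths" construction the remaining outgoing edges at $x$ are in bijection with the remaining oriented edges leaving $u$ in $\cG$. At the root one uses the chosen reference edge $\e_*$ to set up the bijection; in either case each oriented edge $\tilde \e : x \to y$ in $\cT_{\cG}$ has a unique label $\e : u \to \phi(y)$ in $\cG$ with $w(\tilde\e) = w(\e)$.

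Plugging this bijection into the definition of $\cPT$, I get
\begin{equation*}
\sum_{y : \phi(y) = v} \cPT(x,y)
= \tfrac{1}{2}\mathbf{1}_{\{u=v\}} + \tfrac{1}{2}\sum_{\tilde\e : x \to y,\; \phi(y)=v} w(\tilde\e)
= \tfrac{1}{2}\mathbf{1}_{\{u=v\}} + \tfrac{1}{2}\sum_{\e : u \to v} w(\e)
= P_{\cG}(u,v),
\end{equation*}
which does not depend on the particular preimage $x$ of $u$. The lumping criterion then gives that $(\overline{\cX}_t)$ is Markov with transition matrix $P_{\cG}$, i.e. a RW on $\cG$.

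There is essentially no obstacle here: the combinatorial content is already baked into how $\cT_{\cG}$ is built and how labels and weights are transferred to it, so the proof is a mechanical check of the lumping condition, entirely parallel to the proof of Lemma~\ref{lem:projection} for lifts. The only point worth flagging is that one should be careful at the root (where no parent edge exists), but this does not affect the computation since the bijection between outgoing edges at $\circ$ and outgoing edges at $v_*$ is immediate from the construction.
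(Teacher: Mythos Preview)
Your argument is correct and is exactly the standard lumping check the paper has in mind; the paper itself gives no details beyond ``The proof is straightforward.'' One small remark: the reference edge $\e_*$ plays no role in setting up the bijection at the root --- at $\circ$ there is simply no parent edge, so the outgoing oriented edges at $\circ$ are in obvious label- and weight-preserving bijection with those at $v_*$ in $\cG$ --- but this does not affect the validity of your computation.
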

\noindent
The proof is straightforward. Note that  there exists at most $\vert V \vert$ distinct rooted trees $(\cT_{\cG},\circ)$ up to isomorphism. Indeed, two vertices in $V_{\cT_{\cG}}$ with the same label induce the same rooted tree.
\\
For $x\in V_{\cT_{\cG}}$, let $he(x):=d(\circ,x)$ be the \textbf{height} of $x$ in $(\cT_{\cG},\circ)$. The \textbf{subtree $\cT_y$ from $x$ in $(\cT_{\cG},\circ)$} has root $x$, vertex set $V_{x}:=\{y\in V_{\cT_{\cG}}, \text{ $x$ is on any path from $\circ$ to $y$}\}$, and the same edges, weights and labels as $(\cT_{\cG},\circ)$ on $V_{x}$. Note as previously that there exist finitely many such subtrees up to isomorphism. $V_{x}$ is the \textbf{offspring of $x$}. If $y\in V_{x}$, it is a \textbf{descendant} of $x$ \textbf{at generation $he(y)-he(x)$}, and $x$ is an \textbf{ancestor} of $y$. If in addition $he(y)=he(x)+1$, $y$ is a \textbf{child} of $x$ and $x$ is its \textbf{parent}. The \textbf{height-$R$ level of} $\cT_x$ denotes $\partial B(x,R)$, and $B(x,R)\cap V_{x}$ (resp. $\partial B(x,R)\cap V_{x}$) is also called the \textbf{offspring of $x$ up to generation $R$ (resp. offspring of generation $R$, or \textbf{$R$-offspring})}. 

An oriented edge in a rooted tree is \textbf{upward} if the height of its initial vertex is smaller than the height of its end vertex, \textbf{downward} else. Its \textbf{height} is the height of its initial vertex. An oriented path of upward (resp. downward) edges is an \textbf{upward} (resp. \textbf{downward}) \textbf{path}. In a tree, there is at most one edge between two vertices $x,y$. We will denote $\{x,y\}$, $(x,y)$ and $(y,x)$ the corresponding edge and oriented edges.

\subsection{A little history and plan of Section \ref{sec:univcover}}
Lyons \cite{Lyons90} and Takacs \cite{Takacs97} have studied RWs on rooted periodic trees arising from simple graphs, with weights corresponding to the SRW with a positive or negative bias towards the root. Thus, this intersects our setting only when the bias vanishes, the RW being a SRW (hence, it is reversible). The more general case of trees with finitely many cone types has been studied by Nagnibeda and Woess \cite{nagniwoess} in 2002: these rooted weighted trees have finitely many subtrees up to isomorphism (periodic trees obviously have finitely many cone types, while the converse is not true). Their work rely on a fine understanding of the Green function initiated in \cite{Nagni99}, which verifies a finite system of (non-linear) equations, due to the repetitive structure of the tree. Among others, they give a transience criterion for the RW on the tree in terms of the eigenvalues of a matrix associated to the tree, and obtain a CLT for the \textbf{rate of escape} (or \textbf{speed}, i.e. $\lim_{t\rightarrow +\infty} he(\cX_t)/t)$ when it exists) in the transient case.
Similar formulas for Green functions were derived around the same time by Lalley \cite{Lalley00} in the broader setting of regular languages.
Gilch \cite{Gilch16} later gave a formula for the entropy of the RW on regular languages, i.e. a LLN for $\log(P^k(\cX_0,\cX_k))_{k\geq 0}$ where $P$ is the transition matrix of the RW.
\\
We extend slightly some of those results in the setting of periodic trees. We first give in Section \ref{subsec:transience} a simple transience criterion for the RW on the universal cover in terms of the base graph (Proposition \ref{prop:transient}), which we have not found in the literature. In particular, under assumptions \ref{assump1}, \ref{assump2}, \ref{assump3} and \ref{assump4} on $\cG$, the RW on $(\cT_{\cG},\circ)$ is transient (Corollary \ref{cor:transient}). A crucial argument from \cite{gaudilandim} is that the reversibility or not of the RW is irrelevant as soon as there exists an invariant measure (in our case, $\widetilde{\pi}$). Hence, we can apply a classical transience criterion for reversible RWs (see \cite{lyonstransience}), noticing that the size of the ball of radius $R$ grows exponentially with $R$ by \ref{assump2}.
\\
Thus, the loop-erased trace $(\xi_t)_{t\geq 0}$ of the RW is an infinite injective path. It is a Markov chain on $(\cT_{\cG},\circ)$ with an easy description (Proposition \ref{prop:lawofray}). We can then define the \textbf{entropic weight} of a vertex $x\in V_{\cT_{\cG}}$ as the probability $W(x)$ that $x$ is in $\xi$ if the RW starts at $\circ$. The regularity of the structure of $(\cT_{\cG}, \circ)$ gives us a CLT for $\log(W(\xi_t))_{t\geq 0}$ (Corollary \ref{cor:weightofray}). Note that in the analogous Proposition 3 of \cite{blps} (in this case, the graph is locally a Galton-Watson tree), there is only a law of large numbers and a domination for the variance, and this prevents already to determine the profile of the cutoff window.
\\
It remains to prove, in Section \ref{subsubsec:cltunivcov}, that the RW on $(\cT_{\cG},\circ)$ has a positive speed. Precise estimates on the Green function obtained by Lalley \cite{Lalley00} and Nagnibeda and Woess \cite{nagniwoess} show in particular that for fixed $x,y\in \cT_{\cG}$, $\cPT^n(x,y)$ decays exponentially with $n$. We prove that there exists random times $(\tau_i)$ with exponential moments (Proposition \ref{prop:renewal}) such that in $(\cT_{\cG},\circ)$, $he(\cX_{\tau_i})=i$ and $he(\cX_t)>i$ for all $t>\tau_i$. Moreover, the trajectories of the RW before and after $\tau_i$ are independent. Hence we can decompose the RW into i.i.d. excursions between each level with exponential moments. This regularity allows us to prove that $he(\cX_n)_{n\geq 0}$ and $\log(W(\cX_n))_{n\geq 0}$ admit a CLT with nonzero mean (Theorem \ref{thm:weightofray} and Proposition \ref{prop:speed}).

\subsection{Transience of the RW on $\cT_{\cG}$}\label{subsec:transience}
In this section, we give necessary and sufficient conditions on $\cG$ for the RW on $(\cT_{\cG},\circ)$ to be transient, and we prove Proposition \ref{cor:sticktoray}.

\begin{proposition}\label{prop:transient}
Suppose that \ref{assump1} holds for $\cG$. Then the RW on $(\cT_{\cG},\circ)$ is transient if and only if:
\begin{itemize}
\item either $\cG$ verifies Assumption \ref{assump2},
\\
\item or $\cG$ has one oriented cycle $(\e_1, \ldots,\e_m)$ for some $m\geq 1$ such that\\
$w(\e_1)\times\ldots\times w(\e_m) \neq w(\e_m^{-1})\times\ldots \times w(\e_1^{-1})$.
\end{itemize}
\end{proposition}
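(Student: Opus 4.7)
I would proceed by a case analysis on the oriented-cycle structure of $\cG$. If $\cG$ has no oriented cycle, then every non-backtracking path in $\cG$ must be simple (a revisit would close an oriented cycle), so its length is at most $|V_{\cG}|$; hence $\cT_{\cG}$ is a finite tree and the walk on it is positive recurrent, while neither condition of the proposition holds. Both sides of the equivalence are false in this case.

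Assume now that $\cG$ satisfies \ref{assump2}. I would follow the route sketched just before the statement: first invoke \cite{gaudilandim} to reduce transience of $\cPT$ to transience of its additive reversibilization $\cPT^{s}:=(\cPT+\widehat{\cPT})/2$, where $\widehat{\cPT}(x,y):=\tilde{\pi}(y)\cPT(y,x)/\tilde{\pi}(x)$ is the $\tilde{\pi}$-time reversal. The chain $\cPT^{s}$ is reversible, and its conductances $c(\{x,y\}):=\tilde{\pi}(x)\cPT^{s}(x,y)$ on $\cT_{\cG}$ put us in the setting of Lyons' transience criterion \cite{lyonstransience}. Since $\cT_{\cG}$ has only finitely many subtree types, these conductances take finitely many values and are uniformly bounded away from $0$ and $\infty$. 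It then suffices to show that the branching number of $\cT_{\cG}$ strictly exceeds $1$: from any vertex $x$ of $\cT_{\cG}$ whose label can reach both cycles (such vertices exist at bounded height, using \ref{assump1}), concatenating arbitrary words in the two oriented cycles $C, C'$ given by \ref{assump2} produces infinite non-backtracking rays from $x$; because $C$ and $C'$ are not each other's inverse, distinct words produce distinct rays in the tree, so the $R$-offspring of $x$ grows at least like $2^{R/L}$ with $L:=\max(|C|,|C'|)$.

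In the remaining case, $\cG$ has a single oriented cycle $C=(\e_1,\dots,\e_m)$ up to cyclic shift and inversion. Any additional cycle in the non-$C$ part of $\cG$ would produce a second oriented cycle distinct from $C$ and $C^{-1}$, so all non-$C$ edges form finite trees grafted onto $C$. Accordingly $\cT_{\cG}$ has the structure of a (one-sided or bi-infinite) spine obtained by unrolling $C$, with finite tree decorations attached at each spine vertex. Collapsing each decoration to its spine attachment (equivalently, observing the walk only at times it is on the spine) yields an induced nearest-neighbor chain on $\dZ$ with $m$-periodic transition probabilities, whose per-period drift is controlled precisely by the ratio $\prod_i w(\e_i)/\prod_i w(\e_i^{-1})$. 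By a standard Kolmogorov cycle/detailed-balance computation the induced chain is recurrent iff these two products coincide and transient otherwise; since each decoration excursion has finite expected length, the dichotomy transfers back to the walk on $\cT_{\cG}$.

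The hardest step will be Case \ref{assump2}: the Gaudilland-Landim reversibilization is a subtle $L^2$-equivalence that has to be stated precisely, and once in the reversible framework one must confirm that the conductances along the exponentially many downward rays do not degenerate (this uses the finite-subtree-type structure of $\cT_{\cG}$ essentially). A lesser subtlety in the one-cycle case is that $C^{-1}$ may not itself be a positive-weight cycle: the spine is then one-sided, one of the two products vanishes, and transience is immediate but must be handled separately from the balanced bi-infinite case.
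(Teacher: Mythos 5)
Your proposal follows essentially the same road map as the paper's proof: a case split on the cycle structure, the additive reversibilization of Gaudilland--Landim to reduce the two-cycle case to a reversible walk, and the reduction of the one-cycle case to a periodic nearest-neighbour chain on $\dZ$ by ignoring excursions into the finite tree decorations (which is exactly what the paper's Lemma~\ref{lem:core} formalizes). The one genuine difference is in the two-cycle case: you invoke the branching-number criterion (transience of the SRW iff $\mathrm{br}(T)>1$, transported via uniformly bounded conductances), whereas the paper bypasses that theorem and directly exhibits a finite-energy unit flow --- the ``uniformly split at every vertex'' flow --- and checks Royden's energy criterion from~\cite{lyonstransience}. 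Your route is a shortcut that leans on a somewhat heavier black box, while the paper's route is more self-contained; both hinge on the same combinatorial fact, which you state a bit loosely (``distinct words produce distinct rays'') and the paper states more carefully: under~\ref{assump1},~\ref{assump2},~\ref{assump4}, every non-backtracking path meets an oriented edge with at least two non-backtracking continuations within $|\E|$ steps, which is what guarantees that the branching is uniform and hence that your lower growth bound $2^{R/L}$ actually yields $\mathrm{br}(\cT_{\cG})>1$ (exponential growth alone does not). Finally, you fold the ``some edges lie on no cycle'' reduction into the branching-number/decoration arguments rather than stating a separate lemma, which is fine since dead ends do not affect the branching number nor the recurrence class of the trace on the spine; the paper's Lemma~\ref{lem:core} just makes this step explicit.
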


\begin{proof}
If $\cG$ has no oriented cycle, then $(\cT_{\cG},\circ)$ is finite and isomorphic to $\cG$, and the RW is recurrent.
\\
From now on, assume that $\cG$ has at least one oriented cycle. Lemma \ref{lem:core} below deals with the case where \ref{assump4} does not hold for 
$\cG$. Hence, suppose now the contrary.
\\
If \ref{assump2} does not hold, then by \ref{assump4}, $\cG$ is reduced to a cycle. Denote $C=(\e_1, \ldots, \e_m)$ one orientation of this cycle. $(\cT_{\cG},\circ)$ is a line, and the transition probabilities along this line are periodic and are given by the $w(\e_i)$'s and $w(\e_i^{-1})$'s. One can compute that the probability that a RW on $\cG$ starting at the initial vertex $x$ of $\e_1$ runs through $C$ before running through $C^{-1}$ is 
$$\frac{w(\e_1)\times\ldots\times w(\e_m)}{w(e_1)\times\ldots\times w(\e_m) + w(\e_m^{-1})\times\ldots \times w(\e_1^{-1})}.$$ 
Hence the behaviour of a RW on $(\cT_{\cG},\circ)$ is similar to that of a RW on $\dZ$ with transition probabilities $p(i,i+1)=1-p(i,i-1)=\frac{w(\e_1)\times\ldots\times w(\e_m)}{w(e_1)\times\ldots\times w(\e_m) + w(\e_m^{-1})\times\ldots \times w(\e_1^{-1})}$ for all $i\in \dZ$. In particular, it is recurrent if and only if $w(\e_1)\times\ldots\times w(\e_m) = w(\e_m^{-1})\times\ldots \times w(\e_1^{-1})$. 
For a further study of this one-cycle case, see Woess \cite{Woess85}.
\\
Assume now that \ref{assump2} holds. By Lemma 5.1 in \cite{gaudilandim}, it is enough to prove this Proposition in the case where the RW associated to $(\cT_{\cG},\circ)$ (and hence the RW associated to $\cG$) is reversible. Indeed, this result states that if a discrete Markov chain admits an invariant measure, then it is transient if the additive reversibilization of the chain is. The additive reversibilization of the RW on $(\cT_{\cG},\circ)$ is the RW on $(\cT_{\cG}^*,\circ)$, where $(\cT_{\cG}^*,\circ)$ is obtained from $(\cT_{\cG},\circ)$ by modifying its weights as follows: for all $c,y\in V_{\cT_{\cG}}$, set 
$$w^*(x,y)=\frac{1}{2}\left(w(x,y)+\frac{\tilde{\pi}(y)}{\tilde{\pi}(x)}w(y,x) \right).$$
The RW on $(\cT_{\cG}^*,\circ)$ is an irreducible reversible Markov chain with invariant measure $\pi^*$, and $(\cT_{\cG}^*,\circ)$ is in fact the universal cover of $\cG^*$, obtained from $\cG$ by modifying the weights on its oriented edges in the same fashion.
\\
Now, we can apply a classical transience criterion for discrete reversible Markov chains. The earliest proof we found traces back to 1983 (see \cite{lyonstransience}), and is derived from an analogous theorem of Royden for Riemannian surfaces. It states that if a discrete reversible Markov chain has state space $\Omega$, transition matrix $\textbf{P}$ and invariant measure \tiny{$\Pi$}\normalsize , and if there exists a collection of weights $\nu=(\nu_{i,j})_{i,j\in \Omega}$ such that:
\begin{enumerate}[label=(\roman*)]
\item $\forall i,j\in \Omega,\, \nu_{i,j}=-\nu_{j,i}$,
\item there exists $i_0\in \Omega$ such that for all $i\in \Omega$, $\sum_{j\in \Omega}\nu_{i,j}=\left\{ 
\begin{array}{ll}
1 \text{ if $i=i_0$,}\\ 
0 \text{ else,}
\end{array} 
\right.$
\item $\sum_{i,j\in \Omega} \frac{\nu_{i,j}^2}{\text{\tiny{$\Pi$}\normalsize} (i)\textbf{P}(i,j)}<\infty$,
\end{enumerate}
then the Markov chain is transient (here, we use the convention $0/0 =0$). Looking at conditions $(i)$ and $(ii)$, we can interpret $u$ as a current flow entering at $i_0$ and spreading to infinity through an electrical network. The condition $(iii)$ states that the kinetic energy of the flow is finite.
\\
In our setting, we consider the flow generated by a symmetric RW starting at $\circ$ and moving upwards: for all $x\in (\cT_{\cG}, \circ)$ of height $R\geq 2$, let $x_1$ be $x$'s parent and $x_2$ be $x_1$'s parent, and let $$\nu_{x_1,x}=\frac{\nu_{x_2,x_1}}{deg(x_1)-1},$$ and set $\nu_{x,x_1}=-\nu_{x_1,x}$. For every children $x$ of $\circ$, let $\nu_{\circ,x}=-\nu_{x,\circ}=1/deg(\circ)$. For all $x,y\in V_{\cT_{\cG}}$ such that none is the parent of the other, let $\nu_{x,y}=0$. 
\\
Then $(i)$ holds obviously. $(ii)$ is also straightforward. As for checking $(iii)$, note that $\nu_{x_1,x}$ is the probability that a RW started at $\circ$ moves at each step to a uniform children of its current position. Hence the sum of the transitions from one generation to the next one is always 1, that is, for all $R\geq 0$,
$$\sum_{x\in \partial B(\circ,R), y\in \partial B(\circ,R+1)}\nu_{x,y}=1.$$
\noindent
Remark that there exists $\varepsilon>0$, only depending on $\cG$, such that $\widetilde{\pi}(x)\cPT(x,y)>\varepsilon$ for all neighbours $x,y\in V_{\cT_{\cG}}$. Therefore, 
\begin{align*}
\sum_{x,y\in V_{\cT_{\cG}}} \frac{\nu_{x,y}^2}{\widetilde{\pi}(x)\cPT(x,y)} & <2\varepsilon^{-1}\sum_{R\geq 0}\left(\sum_{(x,y)\in \partial \overrightarrow{B}(\circ,R)}\nu_{x,y}^2\right)\\
\\
 & \leq 2\varepsilon^{-1}\sum_{R\geq 0}\max_{(x,y)\in \partial \overrightarrow{B}(\circ,R)}\nu_{x,y},
\end{align*}
where $\partial \overrightarrow{B}(\circ,R)$ is the set of oriented edges with initial vertex in $\partial B(\circ,R)$ and end vertex in $\partial B(\circ,R+1)$. Hence to check $(iii)$, it is enough to prove that the sequence $(s_R)_{R\geq 1}$ is summable with $s_R:= \max_{(x,y)\in \partial \overrightarrow{B}(\circ,R)}\nu_{x,y}$. 
\\
One checks easily that the irreducibility of the RW on $\cG$, \ref{assump2} and \ref{assump4} together imply that every non-backtracking path on $\cG$, after at most $\vert \E\vert$ steps, meets an oriented edge $\e$ leading to at least two other oriented edges. Hence by Lemma \ref{lem:projectionUnivCov}, for all $x\in (\cT_{\cG},\circ)$, if $R=he(x)$ and $x_1$ is the parent of $x$, the shortest path from $\circ$ to $x_1$ contains at least $\lfloor R-1/\vert \E\vert \rfloor$ such edges $\e$, so that
$$\nu_{x_1,x}\leq K (1/2)^{R/\vert \E\vert}$$
for some positive constant $K$. Hence $s_R\leq K(1-w_{min})^{R/\vert \E\vert}$ for all $R\geq 0$, and the transience is proved. This concludes the proof.
\end{proof}

Suppose now that \ref{assump4} does not hold. $\cG$ can be decomposed into a \textbf{core} $c(\cG)$ satisfying \ref{assump4} and "branches" attached to it. $c(\cG)$ is constructed as follows: erase all vertices $x$ of $\cG$ such that $deg(x)=1$ (call them \textbf{leaves}), and delete the edges attached to $x$. Perform this process again on the resulting multigraph, and so on, until no more vertex is erased. Denote $\cG'$ the graph obtained by this algorithm. For every oriented edge $\e$ of $\cG'$, change its weight to $w(\e)/w'(x)$, where $x$ is the initial vertex of $\e$ and $w'(x)$ is the total weight of the edges in $\cG'$ starting at $x$: this modified graph is $c(\cG)$. Clearly, \ref{assump4} holds on $c(\cG)$. Moreover, the RW associated to $c(\cG)$ is also irreducible (erasing a leaf from $\cG$ does not affect the irreducibility).

\begin{lemma}\label{lem:core}
The RW on $\cT_{\cG}$ is transient if and only if the RW on $\cT_{c(\cG)}$ is.
\end{lemma}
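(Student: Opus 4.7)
The plan is to realize the RW on $\cT_{c(\cG)}$ as a time-change of the RW on $\cT_{\cG}$ and then transfer transience/recurrence between the two.

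I would start with a structural description of $\cT_{\cG}$. Because $c(\cG)$ is obtained from $\cG$ by iteratively removing degree-$1$ vertices, every cycle of $\cG$ survives the peeling, and a standard argument then shows that each connected component of $\cG\setminus c(\cG)$ is a finite tree attached to $c(\cG)$ through a single edge at a unique core vertex; call these components \emph{branches}. A non-backtracking path in $\cG$ that steps from the core into a branch cannot return to the core, since doing so would force it to reverse the entering edge in the branch (which is a tree). Unfolding this observation for the universal cover, the vertices of $\cT_{\cG}$ carrying a core label span a subtree isomorphic to $\cT_{c(\cG)}$ (the identification sends the weight $w(\e)$ of a core edge to $w(\e)/w'(x)$, matching the rescaling in the definition of $c(\cG)$), and at each such core vertex there hangs a finite tree obtained as a disjoint union of copies of the branches of $\cG$ rooted at its label.

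Next I would introduce the chain on core vertices induced by $(X_t)_{t\ge 0}$. Starting the RW at a core vertex $\circ$, set $\tau_0=0$ and $\tau_{k+1} := \inf\{t>\tau_k : X_t \text{ has a core label and } X_t\neq X_{\tau_k}\}$. Because every branch appearing in $\cT_{\cG}$ is a finite tree, excursions of $(X_t)$ into a branch are positive recurrent chains on a finite state space and therefore return a.s.\ in finite time to the core vertex from which they started; hence $\tau_k<\infty$ a.s.\ for every $k$. Conditioning on $X_{\tau_k}=x$ and summing, by strong Markov, over the geometric number of branch excursions performed at $x$ before the first genuine core step, I would obtain for every core neighbor $y$ of $x$
\[
\dP\!\left(X_{\tau_{k+1}}=y\,\big|\,X_{\tau_k}=x\right)
= \sum_{j\ge 0}\bigl(1-w'(x)\bigr)^{j}\,w(x,y)
= \frac{w(x,y)}{w'(x)},
\]
which is exactly the transition probability of the RW on $c(\cG)$. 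Combined with the structural step, this identifies $(X_{\tau_k})_{k\ge 0}$ with the RW on $\cT_{c(\cG)}$ started at $\circ$.

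Finally, I would observe that $(X_t)$ visits a core vertex $x$ only during the intervals $[\tau_k,\tau_{k+1})$ with $X_{\tau_k}=x$, each a.s.\ of finite length, while any branch vertex is visited only inside excursions rooted at a single core vertex. Therefore the total number of visits of $(X_t)$ to any vertex of $\cT_{\cG}$ is a.s.\ finite if and only if $(X_{\tau_k})$ visits every core vertex only finitely often, which is precisely transience of the RW on $\cT_{c(\cG)}$. The main obstacle is the structural step: one really needs to justify that branches are finite trees attached to $c(\cG)$ at a unique vertex, and that non-backtracking paths cannot leave and re-enter the core through a branch. Once this is in place, the time-change/strong-Markov argument is essentially formal.
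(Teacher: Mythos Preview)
Your argument is correct and follows essentially the same route as the paper: both proofs identify the core vertices of $\cT_{\cG}$ with $\cT_{c(\cG)}$, realize the RW on $\cT_{c(\cG)}$ as the trace (time-change) of the RW on $\cT_{\cG}$ on this subtree, check that the induced transition probabilities are the rescaled weights $w(\e)/w'(x)$, and then transfer transience/recurrence through the time-change. Your version is in fact somewhat more complete than the paper's, which only spells out the implication ``transient on $\cT_{c(\cG)}$ $\Rightarrow$ transient on $\cT_{\cG}$'' and leaves the converse implicit; you handle both directions via the visit-count argument in your last paragraph.
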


\begin{proof}

Remark that $c(\cT_{\cG})=\cT_{c(\cG)} $, the erased vertices in $\cT_{\cG}$ being exactly those whose labels are erased in $\cG$. Let $(\cX_n)_{n\geq 0}$ be a RW on $\cT_{\cG}$. Note that a.s., $(\cX_n)_{n\geq 0}$ visits infinitely many distinct vertices and edges of $\cT_{c(\cG)}$. The \textbf{trace of }$(\cX_n)$ \textbf{on} $\cT_{c(\cG)}$ is defined as follows: for all $p>m\geq 1$ such that $(\cX_{m-1},\cX_m),(\cX_p,\cX_{p+1})\in \cT_{c(\cG)}$ and $(\cX_i,\cX_{i+1})\not\in \cT_{c(\cG)}$ for all $m\leq i\leq p-1$, erase $\cX_m,\cX_{m+1}, \ldots, \cX_{p-1}$. Denote $(\cX'_n)_{n\geq 0}$ the sequence of remaining $\cX_i$'s (ordered by increasing labels). Remark that the way the weights are chosen in the definition of $c(\cG)$ implies that $(\cX'_n)$ is a RW on $\cT_{c(\cG)}$. Then by Proposition \ref{prop:transient}, $(\cX'_n)$ is transient on $\cT_{c(\cG)}$, and this implies that $(\cX_n)$ is transient on $\cT_{\cG}$.
\end{proof}

\noindent
From now on and until the end of Section \ref{sec:proofs}, assume that $\cG$ satisfies \ref{assump1}, \ref{assump2}, \ref{assump3} and \ref{assump4}.

\begin{corollary}\label{cor:transient}
The RW associated to $(\cT_{\cG},\circ)$ is transient.
\end{corollary}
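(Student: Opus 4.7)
The plan is a one-line reduction to Proposition~\ref{prop:transient}. That proposition states that, under Assumption~\ref{assump1}, the RW on $(\cT_{\cG},\circ)$ is transient provided either \ref{assump2} holds, or $\cG$ has a single oriented cycle on which the product of weights along the two orientations differ. Since we are now assuming \ref{assump1} and \ref{assump2}, the first alternative is satisfied and transience is immediate. Assumptions \ref{assump3} and \ref{assump4} play no role in establishing transience itself; they were introduced earlier to give a cleanly defined graph distance and to ensure that the core reduction of Lemma~\ref{lem:core} is not needed (i.e.\ $c(\cG)=\cG$), so that the entire universal cover, rather than just the trace on the core, is genuinely transient as a chain.

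The only subtlety worth spelling out is that Proposition~\ref{prop:transient} concerns the RW on $(\cT_{\cG},\circ)$ for a \emph{rooted} universal cover, whereas the statement of Corollary~\ref{cor:transient} also talks about the same object; so there is nothing to translate. One could, optionally, add a remark that since there are only finitely many isomorphism classes of $(\cT_{\cG},\circ)$ as $v_*$ ranges over $V_{\cG}$ (as noted just after Lemma~\ref{lem:projectionUnivCov}), transience does not depend on the arbitrary choice of root $v_*$ fixed at the beginning of Section~\ref{sec:univcover}.

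I do not anticipate any obstacle: the real work has been done in Proposition~\ref{prop:transient} (the flow construction, the reversibilization via \cite{gaudilandim}, and the exponential bound on $\nu_{x_1,x}$ using the fact that every non-backtracking path in $\cG$ meets a branching edge within $|\E|$ steps). The corollary is simply the specialization of that proposition to the standing assumptions declared immediately before it.
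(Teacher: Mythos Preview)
Your proposal is correct and matches the paper's approach: the corollary is stated immediately after the standing assumptions \ref{assump1}--\ref{assump4} are declared, and the paper gives no explicit proof, treating it as an immediate consequence of Proposition~\ref{prop:transient} under \ref{assump1} and \ref{assump2}. Your additional remarks about the role of \ref{assump3}, \ref{assump4}, and the independence from the choice of root are accurate context but not required for the argument.
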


\noindent
In Section \ref{subsec:tools}, we defined the \textbf{ray to infinity} $\xi$ of a RW $(\cX_t)_{t\geq 0}$ in $(\cT_{\cG},\circ)$ as:
$$\xi:=\{x\in V_{\cT_{\cG}} \,\vert \, \exists s\geq 0, \cX_s=x\, \text{ and } \forall t\geq s, \, \text{$\cX_t\in V_x$} \},$$
\noindent
with $\xi_t:=\xi \cap \partial B(\cX_0,t)$. Due to the regularity of $\cT_{\cG}$, the RW is "uniformly transient" and the probability to make $R$ steps in a given direction decreases exponentially w.r.t. $R$.

\begin{proposition}\label{prop:sticktoposition}
Let $(\cX_t)_{t\geq 0}$ be a RW on $(\cT_{\cG},\circ)$. There exist constants $C_1,C_2 >0$ only depending on $\cG$ such that for all $R\geq 0$, for all $x\not\in B(\circ,R)$, 
$$\dP(\exists t>0, \, X_t=x)\leq C_1\exp(-C_2R).$$
Since those constants are independent of the choice of $\circ$, the Markov Property gives the following generalization: for all $s>0$, $y\in (\cT_{\cG},\circ)$ and $x\not\in B(y,R)$,
$$\dP(\exists t>s, \, X_t=x\vert X_s=y)\leq C_1\exp(-C_2R).$$
\end{proposition}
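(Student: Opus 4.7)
For $u, v \in \cT_\cG$ set $F(u,v) := \dP(\exists t \geq 0,\, \cX_t = v \mid \cX_0 = u)$ and let $G(u,v) := \sum_{t \geq 0} \dP(\cX_t = v \mid \cX_0 = u)$ be the Green function on $\cT_\cG$. Since the walk is transient (Corollary \ref{cor:transient}) one has $G(v,v) < \infty$ for every $v$, and the standard strong-Markov identity at the first hitting time of $v$ gives
\[
F(u,v) \;=\; \frac{G(u,v)}{G(v,v)}.
\]
Because $G(v,v) \geq 1$, the proposition reduces to the Green-function decay estimate $G(\circ, x) \leq C_1 \exp(-C_2 d(\circ,x))$ with constants depending only on $\cG$.

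The plan for this decay is to exploit the finite cone-type structure of $\cT_\cG$. The rooted labelled subtree $\cT_u$ depends on $u$ only through its label in $V_{\cG}$, so there are at most $\vert V_\cG \vert$ isomorphism classes of such subtrees, and the Green function between any two vertices depends only on their labels together with the sequence of oriented edge-types along the unique geodesic joining them. In this setting, the theory developed by Nagnibeda and Woess \cite{nagniwoess} (with the closely related generating-function formulas of Lalley \cite{Lalley00}) writes $G$ as the solution of a finite system of polynomial equations and shows that under irreducibility and a branching/growth hypothesis — exactly what is guaranteed here by \ref{assump1}, \ref{assump2}, \ref{assump3} and \ref{assump4} — there exist $\rho < 1$ and $K > 0$, depending only on $\cG$, with $G(\circ, x) \leq K \rho^{d(\circ, x)}$. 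If the chain $\cPT$ is not reversible, I would first pass to its additive reversibilization $\cPT^*$ as in the proof of Proposition \ref{prop:transient} (following \cite{gaudilandim}), apply the reversible version of the decay estimate there, and transfer the bound back to $\cPT$; the finite-energy flow $\nu$ constructed in that same proof, which already satisfies $\nu_{(u,v)} \leq K(1-w_{min})^{he(u)/\vert \E \vert}$ on upward edges, makes it plausible that the Nagnibeda--Woess machinery can be run directly in the non-reversible case too.

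Combining the two paragraphs, for $x \notin B(\circ, R)$ one has $d(\circ, x) \geq R+1$ and therefore $F(\circ, x) \leq G(\circ, x) \leq K \rho^{R+1}$, which is the claimed bound with $C_1 = K\rho$ and $C_2 = \log(1/\rho)$. The generalization to arbitrary starting time $s > 0$ and vertex $y$ is immediate from the Markov property at time $s$: conditionally on $\cX_s = y$, the future $(\cX_{s+u})_{u \geq 0}$ is a fresh RW on $(\cT_\cG, y)$, and the rooted cover $(\cT_\cG, y)$ has the same finite cone-type structure as $(\cT_\cG, \circ)$, so the constants $C_1, C_2$ depend only on $\cG$, not on the root. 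The main obstacle is the Green-function decay itself: this is where the finite-cone-type machinery of \cite{nagniwoess, Lalley00} does all the work, and extending it to non-reversible weights is the only non-routine adaptation required.
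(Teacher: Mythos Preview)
Your reduction $F(\circ,x)=G(\circ,x)/G(x,x)\leq G(\circ,x)$ is correct, but the proposal then outsources the entire content of the proposition to a spatial off-diagonal Green-function bound $G(\circ,x)\leq K\rho^{d(\circ,x)}$ that you do not prove, and two gaps remain. First, the references \cite{nagniwoess,Lalley00} are used in this paper for the \emph{temporal on-diagonal} decay $\cPT^n(x,x)\leq C_3e^{-C_4n}$ (Proposition~\ref{prop:greenexpo}); spatial decay in $d(\circ,x)$ is a different statement. The natural factorization $F(\circ,x)=\prod_i F(x_i,x_{i+1})$ along the geodesic involves \emph{upward} first-passage probabilities, and those depend on the whole tree below $x_i$ (not only on the cone type of $x_i$), so the finite-cone-type equations do not bound each factor uniformly below $1$. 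Second, your fallback via additive reversibilization does not work as stated: the result of \cite{gaudilandim} invoked in Proposition~\ref{prop:transient} transfers only the \emph{qualitative} property of transience, not quantitative Green-function estimates, so ``transfer the bound back to $\cPT$'' is unjustified.

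The paper's route is both different and more elementary. It rests on Lemma~\ref{lem:allexitistransient}(2): there is a constant $C_0>0$, depending only on $\cG$, such that from any vertex the walk escapes into the subtree of any prescribed child and never returns with probability at least $C_0$. Along the geodesic from $\circ$ to $x$, assumptions \ref{assump2} and \ref{assump4} force at least $m=\lfloor (R-2)/|\E|\rfloor$ vertices to have two children; at each such branching vertex the walk has probability $\geq C_0$ of escaping through the child \emph{not} on the path to $x$, and the strong Markov property then yields $\dP(\exists t>0,\ \cX_t=x)\leq (1-C_0)^m$. No Green-function analysis, no reversibilization, and no external machinery are needed. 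If you tried to make your Green-function argument rigorous in the non-reversible setting, the missing uniform bound on the upward factors is exactly this escape probability, so the paper's direct argument is also the real content of yours.
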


The proof of this proposition requires the following intermediate result. 

\begin{lemma}\label{lem:allexitistransient}
Let $(\cX_t)_{t\geq 0}$ be a RW starting at $\circ$. Let $x$ be a child of $\circ$. Then:\\
\emph{(1)} $\dP(x\in \xi)\geq \frac{1}{\Delta w_{min}^{\vert \E\vert}}$,
\\
\emph{(2)} there exists a positive constant $C_0$, independent of the choice of $v_*$, such that\\
$\dP(\forall t\geq 1,\,  \cX_t\in V_{x})\geq C_0$.
\end{lemma}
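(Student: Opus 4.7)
The plan is to prove (2) first; (1) then follows from the inclusion $\{\forall t\ge 1,\,\cX_t\in V_x\}\subset\{x\in\xi\}$. The event in (2) forces $\cX_1=x$, so by the Markov property
\[
\dP(\forall t\ge 1,\,\cX_t\in V_x)=\tfrac{1}{2}w(\circ\to x)\cdot p_{(\circ\to x)},
\]
where $p_t:=\dP_v(\forall s\ge 0,\;\cX_s\in V_v)$ is the probability that a RW launched from the child $v$ (reached from its parent $u$ via an edge of type $t=(u\to v)$) never returns to $u$. By the finitely-many-subtree-types structure of $\cT_{\cG}$, $p_t$ depends only on $t\in\E$, so everything reduces to proving $p_t\ge p_{\min}>0$ uniformly in $t$ for some constant depending only on $\cG$.

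Setting $h_t:=1-p_t$, a first-step analysis at $v$ combined with the strong Markov property at the first return to $v$ from each descendant subtree yields, after the lazy factor cancels, the functional equation
\[
h_t\Bigl(1-\sum_{\substack{e=(v\to w)\\ e\neq t^{-1}}}w(e)\,h_e\Bigr)=w(v\to u).
\]
Because $\sum_{e\neq t^{-1}}w(e)=1-w(v\to u)$ and $h_e\le 1$, the identity $h_t=1$ forces $h_e=1$ for every out-edge $e$ of $v$ distinct from $t^{-1}$. Hence $S:=\{t\in\E:h_t=1\}$ is forward-closed under non-backtracking continuation in $\cG$.

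We claim $S=\emptyset$. If not, pick $t_0\in S$; iterating the closure, $S$ contains the whole forward NB-orbit of $t_0$ in $\cG$, which under \ref{assump1}, \ref{assump2}, \ref{assump4} exhausts $\E$: the oriented cycle through $t_0$ furnished by \ref{assump4} lies in $S$, and \ref{assump2} together with irreducibility lets us branch off this cycle at a vertex of higher out-degree to pick up any remaining edge. So $h_t=1$ for all $t$, meaning that from every non-root vertex the walk a.s.\ visits its parent; by induction it a.s.\ reaches $\circ$ from any starting point, and strong Markov at each such visit then forces $\circ$ to be visited infinitely often almost surely, contradicting Corollary \ref{cor:transient}. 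Hence $S=\emptyset$, and as $\{h_t\}_{t\in\E}$ is a finite subset of $[0,1)$, $p_{\min}:=\min_t(1-h_t)>0$ depends only on $\cG$; one may take $C_0=w_{\min}\,p_{\min}/2$, proving both (1) and (2). The main technical obstacle is the step $S\neq\emptyset\Rightarrow S=\E$, i.e.\ strong connectivity of the non-backtracking line graph of $\cG$: this is where \ref{assump2} is used decisively, as without it $\cG$ could reduce to a single oriented cycle on which NB-continuation is deterministic and $S$ could be a proper nonempty subset.
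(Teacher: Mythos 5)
Your proof is correct, but it takes a genuinely different route from the paper's, especially for part (2). The paper proves (1) first and directly: by pigeonhole there is an edge label $\e$ from whose head the walk begins its ray through a fixed child with probability at least $1/\Delta$, and \ref{assump1}, \ref{assump2}, \ref{assump4} provide a non-backtracking path of length $m\le|\E|$ from $(\circ,x)$ to an edge with that label, which the walk follows with probability at least $w_{min}^m$; part (2) is then obtained from transience (Corollary \ref{cor:transient}), which gives \emph{some} escaping subtree with \emph{some} positive probability $C(\circ)$, made uniform by the finiteness of the number of labels. You instead attack (2) head-on via the return probabilities $h_t$, derive the first-return functional equation, and use a rigidity argument ($h_t=1$ propagates to every positive-weight non-backtracking continuation, hence by strong connectivity of the non-backtracking line graph to all of $\E$, contradicting Corollary \ref{cor:transient}) to conclude $\max_t h_t<1$. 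Both arguments ultimately rest on the same two inputs --- transience, and the fact that any oriented edge of $\cG$ can be reached from any other by a non-backtracking path (a fact the paper also invokes with ``details left to the reader'') --- but yours replaces the paper's compactness-over-labels step by an explicit algebraic identity, which is arguably cleaner and yields the quantitative relation $C_0=\tfrac{1}{2}w_{min}p_{min}$. One caveat: by deducing (1) from (2) you only obtain $\dP(x\in\xi)\ge \tfrac{1}{2}w_{min}p_{min}$ with $p_{min}$ non-explicit, rather than the explicit constant announced in the statement (which, as printed, exceeds $1$ and is evidently a typo for $w_{min}^{|\E|}/\Delta$); since this constant is only ever used qualitatively downstream, the substitution is harmless, but strictly speaking your part (1) is a weaker, non-explicit version of the paper's.
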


\begin{proof}
There exists an oriented edge $(x_1,x_2)\in \E_{\cT_{\cG}}$ such that a RW started at $x_1$ has a probability $p\geq 1/\Delta$ to begins its ray to infinity by visiting $x_2$. Note that this holds with the same value of $p$ if one replaces $x_1$ (resp. $x_2$) by $x'_1\in V_{\cT_{\cG}}$ (resp. $x'_2\in V_{\cT_{\cG}}$) such that $(x'_1,x'_2)$ has the same label as $(x_1,x_2)$. Let $\e$ be this label. 
\\
One might remark that the irreducibility of the RW on $\cG$, \ref{assump2} and \ref{assump4} together imply that for any $\e_a,\e_b\in \E$, there exists a non-backtracking path $(\e_1, \ldots, \e_m)$ in $\cG$ such that $\e_1=\e_a$ and $\e_m=\e_b$(details left to the reader). Moreover, one might impose that this path is injective, so that $m\leq \vert E\vert$. 
This point in the case where $\e_1$ is the label of $(y_1, \circ)$ and $\e_m=\e$, and the projection property of $(\cT_{\cG},\circ)$ on $\cG$ (Lemma \ref{lem:projectionUnivCov}) imply that there is an upward path $(\e_1', \ldots, \e_m')$ in $\cT_{\cG}$ with $\e_1'=(\circ,x)$ and $\e_m'$ has label $\e$. Hence the Markov Property of the RW on $\cT_{\cG}$ implies that

$$\dP(x\in \xi)\geq \frac{1}{\Delta (w_{min})^m}\geq  \frac{1}{\Delta w_{min}^{\vert \E\vert}},$$
and this concludes the proof of (1).
\\
To prove (2), notice first that the transience of the RW on $(\cT_{\cG}, \circ)$ implies that there exists $x_1\in V_{\cT_{\cG}}$ and $C(\circ)>0$ such that for a RW $(\cX_t)_{t\geq 0}$ started at $\circ$, 
\begin{equation}\label{eq:walkgoesinsubtree}
\dP(\forall t\geq 1, \cX_t\in \cT_{x_1})\geq C(\circ).
\end{equation}
As in the proof of (1), if $(\circ',x_1')$ has the same label as $(\circ,x_1)$, one can replace $\circ$ by $\circ'$ and $x_1$ by $x_1'$ in (\ref{eq:walkgoesinsubtree}) with $C(\circ')=C(\circ)$, hence $C(\circ)$ only depends on the choice of $v_*$. Since $V_{\cG}$ is finite, $C':=\inf_{v_*\in V}C(\circ)>0$. A reasoning similar to the proof of (1) leads to the conclusion.
\\
\end{proof}

\begin{proof}[Proof of Proposition \ref{prop:sticktoposition}]
Let $R \geq 0$ and $x\in V_{\cT_{\cG}}$ be as above. On the shortest path $p$ from $\circ$ to $x$, there are at least $m:=\lfloor (R-2)/\vert \E\vert\rfloor$ vertices (excluding $\circ$ and $x$) that are the initial vertex of two upward edges in the subtree $(\cT_{\cG},\circ)$. Denote $x_1, \ldots x_m$ the first $m$ such vertices by increasing height, and $y_i$ the child of $x_i$ that is on $p$. Let $E_i$ (resp. $E_i'$) be the event that $(\cX_t)_{t\geq 0}$ hits $x_i$ (resp. $y_i$). By Lemma \ref{lem:allexitistransient} (2), after hitting $x_i$, the RW has a probability at least $C_0$ to escape through the child of $x_i$ that is not on $p$, and to never hit $y_i$. Hence by the strong Markov property, $\dP(E'_i\vert E_i)\leq 1-C_0$. Note that $E'_m\subset E_m\subset E'_{m-1}\subset \ldots \subset E'_1\subset E_1$. Therefore,
\begin{align*}
\dP(\exists t>0,\, \cX_t=x)\leq &\dP(E'_m)
\\
\leq &\dP(\cap_{i=1}^m (E_i\cap E'_i))
\\
\leq & \dP(E_1)\times\prod_{i=1}^{m-1}\dP(E_i'\vert E_i) \dP(E_{i+1}\vert E'_i)\times \dP(E'_m\vert E_m)
\\
\leq & (1-C_0)^m.
\end{align*}
The conclusion follows.
\end{proof}

\noindent
As a corollary, we can prove Proposition \ref{cor:sticktoray}:

\begin{proof}[Proof of Proposition \ref{cor:sticktoray}]
Note that $\xi \cap B(\cX_t,R)=\emptyset$ implies that there exists $y\not\in B(\cX_t,R)$ such that for some $s>t$, $\cX_s=y$, so that we can apply the previous Proposition.
\end{proof}

\subsection{CLTs for the RW on the universal cover}\label{subsubsec:cltunivcov}
For $x,y\in V_{\cT_{\cG}}$ such that $w(x,y)>0$, let $\hat{w}(x,y):=\dP(y\in \xi \vert \cX_0=x)$ be the probability that the ray to infinity of a RW $(\cX_t)_{t\geq 0}$ started at $x$ goes through a given neighbour $y$ of $x$. Note that this quantity only depends on the label of $(x,y)$ (denote it $\e$), so that one might define $\hat{w}(\e)=\hat{w}(x,y)$. The second part of Lemma \ref{lem:allexitistransient} ensures that $\hat{w}(\e)>0$ for all $\e\in \E$. Let $\widehat{\cG}$ be $\cG$ with weights $(\hat{w}(\e))_{\e\in \E}$ instead of $(w(\e))_{\e\in \E}$. 
\\
Define the \textbf{Non Backtracking Random Walk (NBRW) on $\widehat{\cG}$} as a RW $(Z_t)_{t\geq 0}$ on $\overrightarrow{E}$, such that for all $\e_1,\e_2\in \E$ and $t\geq 0$,
$$\dP(Z_{t+1}=\e_2\vert Z_t=\e_1)= 
\left\{ 
\begin{array}{lll}
\frac{w(\e_2)}{1-w(\e_1^{-1})} \text{ if the end vertex of $\e_1$ is the initial vertex of $\e_2$ }\\ \hfill\text{and $\e_2\neq \e_1^{-1}$,} \\ 
0 \text{ else.}
\end{array} 
\right.$$

 In particular, by our assumptions on $\cG$, the NBRW associated to $\widehat{\cG}$ is irreducible, so that it has a unique invariant probability measure $\hat{\pi}$.

\begin{proposition}\label{prop:lawofray}{\emph{\textbf{(Theorem F' in \cite{nagniwoess})}}}
Let $(\cX_t)$ be a RW on $\cT_{\cG}$ with $\cX_0=\circ$, and let $\rho_t:=(\xi_t,\xi_{t+1})$ be the $t$-th upward edge of its ray to infinity $\xi$. Then $(\rho_t)_{t\geq 0}$ is a Markov chain, and has the same law as a NBRW on $\cT_{\widehat{\cG}}$.
\end{proposition}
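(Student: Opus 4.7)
The plan is to combine the last-exit decomposition of the transient walk $(\cX_t)$ with the self-similarity of $\cT_{\cG}$: the subtree of descendants $\cT_y$ is determined up to weighted isomorphism by the label of the oriented edge entering $y$ from its parent.

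First I would check that $(\rho_t)_{t \geq 0}$ is Markov. By Corollary \ref{cor:transient} the last-exit time $\tilde{L}_t := \max\{s \geq 0 : \cX_s \in B(\circ, t)\}$ is almost surely finite; since the walk cannot leave $B(\circ, t)$ without first being at level exactly $t$, one has $\cX_{\tilde{L}_t} = \xi_t$ and $\cX_{\tilde{L}_t + 1} = \xi_{t+1}$. The classical last-exit identity for transient chains says that, conditionally on the past through time $\tilde{L}_t$, the future $(\cX_{\tilde{L}_t + s})_{s \geq 0}$ has the law of a walk starting at $\xi_t$ and conditioned never to re-enter $B(\circ, t)$. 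Since $\cT_{\xi_t}$ attaches to the rest of $\cT_{\cG}$ only through $\xi_t$, this amounts to a walk on $\cT_{\xi_t}$ from $\xi_t$ conditioned never to return to $\xi_t$, and self-similarity then identifies this conditional law with one that depends on the past only through $\rho_{t-1}$.

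To compute the transition kernel, fix $\rho_t = (\xi_t, \xi_{t+1})$ and a child $z$ of $\xi_{t+1}$. The last-exit identity together with self-similarity allow me to rewrite $\dP(\xi_{t+2} = z \mid \rho_t)$ as the probability that, for a fresh walk $(\cX'_s)$ started at $\xi_{t+1}$, the first vertex of its ray is $z$, conditioned on that first vertex not being the parent $\xi_t$. Directly from the definition of $\hat{w}$, the probability that the ray of $(\cX'_s)$ leaves $\xi_{t+1}$ through neighbor $y$ equals $\hat{w}(\xi_{t+1}, y)$, and these events partition the sample space, so their probabilities sum to $1$. Rescaling by $1 - \hat{w}(\xi_{t+1}, \xi_t)$ then yields
\[
\dP(\xi_{t+2} = z \mid \rho_t) = \frac{\hat{w}(\xi_{t+1}, z)}{1 - \hat{w}(\xi_{t+1}, \xi_t)},
\]
which is exactly the NBRW transition on $\cT_{\widehat{\cG}}$.

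The main subtlety is the rigorous formulation of the last-exit decomposition, since $\tilde{L}_t$ is not a stopping time. A clean route is Doob's $h$-transform: for a transient chain with kernel $P$ and a set $A$, the escape function $h(x) := \dP_x(\text{never hit } A)$ is $P$-harmonic off $A$, and conditioning the chain on never hitting $A$ yields a Markov chain with kernel $P(x, y) h(y) / h(x)$. Lemma \ref{lem:allexitistransient}(2), applied in the label class of $\xi_t$, guarantees that the relevant escape probabilities are bounded away from $0$, so this conditioning is always well defined; Lemma \ref{lem:projectionUnivCov} then transfers the computation to a canonical representative of the label class and closes the self-similarity step.
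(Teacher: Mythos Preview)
Your argument is correct, but the paper takes a shorter route that sidesteps the last-exit machinery entirely. Instead of the last-exit time $\tilde{L}_t$, the paper works with the \emph{first} hitting time $\tau := \inf\{s \geq 0 : \cX_s = x_{t_0}\}$ of the vertex $x_{t_0}$, which is an honest stopping time. The key observation is that in a tree the ray of the full walk, restricted to levels $\geq t_0$, coincides with the ray $\rho^{(\tau)}$ of the post-$\tau$ walk; hence $\{\rho_{t_0} = (x_{t_0-1}, x_{t_0})\} = \{\tau < \infty\} \cap \{\rho^{(\tau)}_1 \neq x_{t_0-1}\}$ and $\{\rho_{t_0+1} = (x_{t_0}, x)\} = \{\tau < \infty\} \cap \{\rho^{(\tau)}_1 = x\}$. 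A single application of the strong Markov property at $\tau$ then gives the transition probability $\hat{w}(x_{t_0}, x)/(1 - \hat{w}(x_{t_0}, x_{t_0-1}))$ directly, with no need for Doob's $h$-transform or the last-exit decomposition. Your approach is conceptually sound and perhaps more systematic (the $h$-transform viewpoint generalizes well), but here it is heavier than necessary: the tree structure lets one replace the non-stopping last-exit time by a genuine stopping time and finish in two lines.
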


\begin{proof}
It is enough to prove that for all $t_0\geq 0$, for all $x\in V_{\cT_{\cG}}$ of height $t_0+1$, and $x_i$ the vertex of height $i$ in the shortest path from $\circ$ to $x$, $1\leq i\leq t_0$: 
\begin{equation}\label{eq:markovrayinf}
\dP(\rho_{t_0+1} = (x_{t_0},x) \vert\, \forall i\leq t_0,\,\rho_{i} =(x_{i-1},x_i))=\frac{\hat{w}(x_{t_0},x)}{1-\hat{w}(x_{t_0 -1},x_{t_0})}.
\end{equation}
\noindent
Note that $\{ \forall i\leq t_0,\,\rho_{i} =(x_{i-1},x_i) \}= \{\rho_{t_0}=(y_{t_0-1}, y_{t_0}\}$. Let $(\rho^{(\tau)}_t)_{t\geq 0}$ be the ray to infinity of $(\cX_{\tau +t})_{t\geq 0}$, where $\tau=\inf\{t\geq 0, \, X_t=y_{t_0}\}$. (\ref{eq:markovrayinf}) follows from the Strong Markov Property applied to the stopping time $\tau$, and of the equalities $\{\rho_{t_0}=(x_{t_0-1},x_{t_0})\}=\{\tau <+\infty\} \cap \{\rho^{(\tau)}_1\neq x_{t_0-1}\}$ and  $\{\rho_{t_0 +1}=(x_{t_0},x)\}=\{\tau <+\infty\} \cap \{\rho^{(\tau)}_1=x\}$.
\end{proof}
\noindent
We define the \textbf{entropic weight of a vertex $x\in V_{\cT_{\cG}}$} 
as the probability that $x$ is in the ray to infinity of a RW started at $\circ$, and denote it $W(x)$.
Let $(x_0, \ldots, x_{H})$ be the vertices on the shortest path from $\circ$ to $x$, where $H= he(x)$, so that $x_0=\circ$ and $x_{H}=x$. By Proposition \ref{prop:lawofray},
\begin{equation}\label{eq:defofweightvertex}
W(x)=\hat{w}(x_0,x_1)\times \prod_{i=1}^{H-1}\frac{\hat{w}(x_i,x_{i+1})}{1-\hat{w}(x_{i-1},x_i)}.
\end{equation}

\begin{corollary}\label{cor:weightofray}
There exist $h_W>0$, $\sigma_W \geq 0$ such that for all $x\in V_{\cT_{\cG}}$ and $(\cX_t)_{t\geq 0}$ a RW on $(\cT_{\cG},\circ)$ with $\cX_0=\circ$, 
$$\frac{-\log(W(\xi_t))-h_W^{-1}t}{\sqrt{t}} \underset{t\rightarrow +\infty}{\to}\mathcal{N}(0, \sigma_W^2)$$
in distribution. Moreover, $\sigma_W =0$ iff $\cG$ verifies the following cylindrical symmetry: there exists $x\in V_{\cT_{\cG}}$, such that for all $j\geq 1$, all upward (resp. downward) edges in $(\cT_{\cG},x)$ between the levels $j-1$ and $j$ have the same weight.
\end{corollary}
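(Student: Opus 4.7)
The plan is to reduce the CLT for $-\log W(\xi_t)$ to a Markov chain CLT for an ergodic sum along the label-process of the ray. Unfolding Equation~\eqref{eq:defofweightvertex} with $x = \xi_t$ and $x_i = \xi_i$, a direct telescoping gives
\[
-\log W(\xi_t) \;=\; \sum_{i=0}^{t-1} h(\rho_i) \;-\; \log\bigl(1-\hat{w}(\rho_{t-1})\bigr),
\]
where $h(\e) := \log\frac{1-\hat{w}(\e)}{\hat{w}(\e)}$ is a function on the finite set $\E$ of edge-labels (identifying each oriented edge of $\cT_{\cG}$ with its label in $\cG$). By Lemma~\ref{lem:allexitistransient}(2), $\hat{w}$ is bounded away from $0$ uniformly over $\E$, and an analogous argument combined with \ref{assump2} yields $\hat{w}$ bounded away from $1$, so the boundary term is $O(1)$ and plays no role at the CLT scale.

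By Proposition~\ref{prop:lawofray}, the label sequence $(\rho_i)_{i\geq 0}$ is distributed as the NBRW on $\widehat{\cG}$, a finite-state Markov chain which is irreducible under \ref{assump1}--\ref{assump2} and thus admits the unique stationary distribution $\hat{\pi}$ introduced earlier. Applying Lemma~\ref{lem:cltmarkov} to this chain with the function $h$ yields the claimed CLT with $h_W := \sum_{\e \in \E}\hat{\pi}(\e)\,h(\e)$ and $\sigma_W^2$ the corresponding asymptotic variance. Strict positivity of $h_W$ then follows from Proposition~\ref{prop:sticktoposition}: since $W(x) = \dP(x \in \xi) \leq \dP(\exists s,\,\cX_s = x) \leq C_1\exp(-C_2\,he(x))$, we obtain the almost-sure bound $-\log W(\xi_t) \geq C_2 t - \log C_1$, and the LLN underlying the CLT forces $h_W \geq C_2 > 0$.

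The delicate step is the characterization $\sigma_W = 0 \iff$ cylindrical symmetry. The easy direction ($\Leftarrow$) is a direct check: under the cylindrical assumption at some $x$, the Green-function recursion that determines $\hat{w}$ from $w$ makes $\hat{w}(\rho_i)$ (and hence $h(\rho_i)$) depend only on the level of $\rho_i$ in $(\cT_{\cG},x)$, so $\sum_{i=0}^{t-1} h(\rho_i)$ is deterministic up to the $O(1)$ boundary and $\sigma_W = 0$. The main obstacle is the converse: vanishing asymptotic variance forces $h - h_W$ to be a coboundary of the irreducible NBRW on $\widehat{\cG}$, which collapses $\hat{w}$ (and hence $w$) to a single value on every label reachable at a given level of the tree; one then reconstructs the cylindrical symmetry of the original weights by inverting the Green-function recursion, with some care needed to accommodate a possible nontrivial period of the NBRW that would merely shift the base vertex $x$ at which the symmetry is recorded.
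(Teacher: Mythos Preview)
Your argument is essentially the paper's own: expand $-\log W(\xi_t)$ via \eqref{eq:defofweightvertex} as an additive functional of the label chain $(\ell(\rho_i))_i$, invoke Proposition~\ref{prop:lawofray} to identify that chain as the irreducible NBRW on $\widehat{\cG}$, and conclude by Lemma~\ref{lem:cltmarkov}. Your telescoping and boundary-term handling match the paper's computation line for line.

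Where you go beyond the paper is in the two points it leaves implicit. Your proof of $h_W>0$ via the deterministic bound $W(\xi_t)\le C_1 e^{-C_2 t}$ from Proposition~\ref{prop:sticktoposition} is clean and correct; the paper simply asserts positivity. On the $\sigma_W=0$ characterization, the paper's proof says nothing at all, so your sketch is already more than what is offered there. The easy direction is fine as you describe it (cylindrical symmetry forces $\hat w$, hence $h$, to depend only on the level, making the sum deterministic). For the converse you correctly identify the mechanism (vanishing asymptotic variance forces $h-h_W$ to be a coboundary for the NBRW, which constrains $\hat w$ and then $w$ via the Green recursion), but as you note, the passage from ``$\hat w$ is level-determined'' back to ``$w$ is level-determined'' and the role of a possible NBRW period in selecting the base vertex $x$ are genuinely where the work lies; this is left as a sketch in your write-up just as it is left unaddressed in the paper. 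One minor point: you set $h_W$ equal to the $\hat\pi$-mean of $h$, whereas the statement as written centers by $h_W^{-1}t$; this is a notational inconsistency in the statement itself rather than an error on your part.
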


\begin{proof}
From equation (\ref{eq:defofweightvertex}), we have $W(\xi_t)=\frac{\prod_{i=0}^{t-1}\hat{w}(\ell(\rho_i))}{\prod_{i=0}^{t-2} 1-\hat{w}(\ell(\rho_i))}$, so that 
$$-\log(W(\xi_t))=\log(\hat{w}(\ell(\rho_{t-1})))+\sum_{i=0}^{t-2}\left[\log(\hat{w}(\ell(\rho_i)))- \log(1-\hat{w}(\ell(\rho_i)))\right],$$
where $\ell(e)$ is the label of $e$ for all oriented edge $e$. By Proposition \ref{prop:lawofray} and Lemma \ref{lem:projectionUnivCov}, $(\ell(\rho_i))_{i\geq 0}$ is a NBRW on $\widehat{\cG}$. We conclude by Lemma \ref{lem:cltmarkov}. 
\end{proof}

\noindent
It remains now to derive a similar result for $(\log(W(\cX_t)))_{t\geq 0}$, which requires in particular to prove that the RW has a positive speed.
Since $(\cX_t)_{t\geq 0}$ is transient, $\theta_i:=\sup\{t\geq 0 \, \vert\, he(\cX_t)=i\}$ is a.s. well-defined and finite for all $i\in \dN$. Let $\tilde{\theta}_i:=\theta_{i+1}-\theta_i$ be the time between the last visits of the walk at level $i$ and at level $i+1$, for $i\geq 1$. Let $\tilde{\theta}_0:=\theta_1$. We have the following corollary of Theorem 2.5 in \cite{Lalley00} (or Theorem C in \cite{nagniwoess}):

\begin{proposition}\label{prop:greenexpo}
There exists $C_3,C_4$ such that for all $n\geq 0$ and $x\in (\cT_{\cG},\circ)$, 
\begin{equation}\label{eq:greenexpo}
\cPT^n(x,x)\leq C_3\exp(-C_4n).
\end{equation}
\end{proposition}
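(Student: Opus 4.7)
The strategy is to study the Green function $G(x,x|z) := \sum_{n \geq 0} z^n \cPT^n(x,x)$ and show that it extends analytically to a disk $\{|z| < 1 + \eta\}$ for some $\eta > 0$ independent of $x$; by Cauchy's coefficient estimates this will immediately yield the desired exponential decay $\cPT^n(x,x) \leq C_3 e^{-C_4 n}$. Uniformity over $x$ will come for free from the finiteness of cone types (up to isomorphism there are at most $|V_{\cG}|$ rooted subtrees $\cT_y$).

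For each oriented edge $\e = (y,x) \in \E_{\cG}$, let $F_\e(z) := \sum_{n \geq 1} z^n u_n^{(\e)}$ where $u_n^{(\e)}$ is the probability that, starting at a lift of $y$ in $\cT_{\cG}$, the first hitting time of its parent (a lift of $x$) is $n$. By finite cone types, $F_\e$ depends only on the label $\e$. First-step analysis at $y$, using the key tree property that every trip into the subtree of $y$ must come back to $y$ before reaching $x$, gives
\begin{equation*}
F_\e(z) = \frac{z\, w(y,x)/2}{1 - z/2 - (z/2)\sum_{\e' = (y,y'),\, \e' \neq \e^{-1}} w(\e')\, F_{(\e')^{-1}}(z)},
\end{equation*}
so that $(F_\e(z))_{\e \in \E_{\cG}}$ satisfies a finite polynomial system $\mathcal{P}(z,F) = 0$. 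At $z=1$, $F_\e(1)$ is the probability that the walk started at the child endpoint of $\e$ ever hits its parent; by Lemma \ref{lem:allexitistransient}(2), $F_\e(1) \leq 1 - C_0 < 1$. The plan is then to apply the implicit function theorem to $\mathcal{P}$ at $(z,F) = (1, (F_\e(1))_\e)$: if the Jacobian $\partial \mathcal{P}/\partial F$ is invertible there, the $F_\e$'s extend analytically to a complex disk $|z| < 1 + \eta$. One-step analysis at $x$ then expresses the first-return generating function $U_{x,x}(z)$ as a polynomial in $z$ and the $F_\e(z)$'s, hence analytic on the same disk; since $U_{x,x}(1) < 1$ by the transience of Corollary \ref{cor:transient}, the identity $G(x,x|z) = (1 - U_{x,x}(z))^{-1}$ gives the required analyticity of $G(x,x|\cdot)$ beyond the unit disk.

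The main obstacle is verifying that the Jacobian of $\mathcal{P}$ is non-singular at the probabilistic solution $F(1)$, i.e.\ that $F(1)$ is the non-critical branch of the algebraic curve defined by $\mathcal{P}$. This is precisely what is established in Theorem 2.5 of \cite{Lalley00} and Theorem C of \cite{nagniwoess}; assumptions \ref{assump2} and \ref{assump4} rule out the degenerate cone-type structures (single-cycle bases, leaf branches) where the linearized system could fail to be contracting. Invoking those results directly, one obtains $\eta > 0$ such that $|\cPT^n(x,x)| \leq M(1+\eta/2)^{-n}$, and the finite list of cone types reduces the constants $M$ and $\eta$ to finitely many, yielding the uniform $C_3, C_4$.
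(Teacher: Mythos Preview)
The paper does not give its own proof of this proposition; it simply states it as a corollary of Theorem~2.5 in \cite{Lalley00} (equivalently Theorem~C in \cite{nagniwoess}). Your sketch correctly reconstructs the Green-function/implicit-function-theorem machinery behind those cited results and, for the one genuinely hard step (non-singularity of the Jacobian at the probabilistic solution), invokes the very same references, so you are fully aligned with the paper---just more explicit about what is inside the black box.
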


\noindent
We deduce the following: 
\begin{corollary}\label{cor:expomomentslasttimes}
There exists $C_5>0$ such that for all $i,n\geq 0$, and for all $\e\in \E$ such that $\dP((\cX_{\theta_i-1},\cX_{\theta_i})\text{ has label } \e)>0 $,
\begin{equation}\label{eq:expomomentslasttimes}
\dP(\tilde{\theta}_i\geq n \vert \, (\cX_{\theta_i-1},X_{\theta_i})\text{ has label } \e)\leq C_5\exp(-C_4n).
\end{equation}
\end{corollary}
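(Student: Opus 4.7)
The strategy is to apply the Markov property at time $\theta_i+1$ to recast $\tilde\theta_i$ as a last-return time of a random walk inside a subtree, and then deploy the Green-function bound of Proposition \ref{prop:greenexpo}.

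First I would observe that since $\theta_i$ is the last time the walk visits level $i$, the trajectory $(\cX_{\theta_i+1+s})_{s\geq 0}$ lives entirely in the subtree $V_y$ rooted at $y:=\cX_{\theta_i+1}$. Since $y$ is the unique vertex of $V_y$ at level $i+1$, the last visit of $(\cX_t)$ to level $i+1$ coincides with the last visit to $y$, giving
$$\tilde\theta_i \geq n \iff \exists s\geq n-1,\ \cX_{\theta_i+1+s}=y.$$

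Next I would sum over the possible values $\theta_i=t$, $\cX_t=x$, and $\cX_{t+1}=y$, with $x$ at level $i$ and $y$ a child of $x$. Writing $E_{x,\e,t}:=\{\cX_t=x,\ (\cX_{t-1},\cX_t)\text{ has label }\e\}$, the Markov property at time $t+1$ factorizes each joint probability as $\dP(E_{x,\e,t})\cdot \cPT(x,y)\cdot F(y)$, where $F(y)$ is a future factor depending only on $y$. For the denominator $\dP((\cX_{\theta_i-1},\cX_{\theta_i})\text{ has label }\e)$ one gets $F(y)=h(y):=\dP_y(\forall s\geq 0,\ \cX_s\in V_y)$; adding the constraint $\tilde\theta_i\geq n$ in the numerator gives
$$F(y)=q_y(n):=\dP_y\!\left(\forall s\geq 0,\ \cX_s\in V_y;\ \exists s\geq n-1,\ \cX_s=y\right).$$

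It then suffices to compare $q_y(n)$ and $h(y)$ uniformly in $y$. Dropping the containment event and applying Proposition \ref{prop:greenexpo} yields
$$q_y(n)\leq \sum_{s\geq n-1}\cPT^s(y,y)\leq \frac{C_3\, e^{C_4}}{1-e^{-C_4}}\,e^{-C_4 n},$$
while Lemma \ref{lem:allexitistransient}(2) gives $h(y)\geq C_0>0$ uniformly in $y$. Setting $C_5:=C_3 e^{C_4}/(C_0(1-e^{-C_4}))$, we obtain $q_y(n)\leq C_5\, e^{-C_4 n}\, h(y)$ term by term, and plugging this into the ratio of the two sums over $(t,x,y)$ yields the claimed conditional bound. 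The main subtlety lies in setting up the Markov decomposition so that the ``last-visit'' character of $\theta_i$ and $\theta_{i+1}$ is faithfully tracked; this is handled cleanly by the trapping observation of the first step, which turns the global condition $\{\forall s>\theta_i,\ \cX_s\notin \text{level } i\}$ into the local event $\{\forall s\geq 1,\ \cX_{\theta_i+s}\in V_y\}$.
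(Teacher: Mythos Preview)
Your argument is correct and follows essentially the same route as the paper: reduce via the Markov property to a ratio of a return-probability sum (bounded through Proposition~\ref{prop:greenexpo}) over an escape probability (bounded below through Lemma~\ref{lem:allexitistransient}(2)). The only differences are cosmetic: you anchor the Markov property at $\theta_i+1$ rather than $\theta_i$, which makes the identification of $\theta_{i+1}$ as the last visit to $y$ particularly clean, and you carry the conditioning on the label $\e$ explicitly through the factorized sums, whereas the paper leaves this implicit. One small remark: to get $h(y)\geq C_0$ from Lemma~\ref{lem:allexitistransient}(2) you are implicitly re-rooting at the parent $x$ of $y$ and using $\dP_x(\forall t\geq 1,\ \cX_t\in V_y)=\cPT(x,y)\,h(y)\leq h(y)$; this is fine but worth stating.
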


\begin{proof}
For $i=1$, this is a direct application of the above Proposition \ref{prop:greenexpo}. Now, for $i>1$, note that the law of $(\cX_t)_{t\geq \theta_i}$ is that of a RW started at $\cX_{\theta_i}$ conditioned on making its first step not towards the parent of $\cX_{\theta_i}$, and then never coming back to $\cX_{\theta_i}$ after the start. If $A$ denotes the event that this conditioning happens, then $\dP(A)\geq w_{min}C$ by Lemma \ref{lem:allexitistransient}. Therefore, (\ref{eq:greenexpo}) implies that for all $n\geq 0$,
\begin{align*}
\dP(\tilde{\theta}_i\geq n)\leq &\sum_{m\geq n} \dP(X_{\theta_i +m}=X_{\theta_i})
\\
\leq & \frac{1}{w_{min}C}\sum_{m\geq n}C_3\exp(-C_4m)
\\
\leq & C_5\exp(-C_4n)
\end{align*}
for $C_5$ large enough.
\end{proof}

\noindent
We say that $t\in \dN$ is an \textbf{exit time} if the oriented edge $(\cX_t,\cX_{t+1})$ is upward, has label $\e_*$ and if $\cX_s\neq \cX_t$ for all $s\geq t+1$ (recall that $\e_*\in \E$ was arbitrarily picked at the beginning of Section \ref{sec:univcover}). For $i\geq 0$, let $\tau_i$ be the $i$-th time interval between two exit times and $\e_i$ the corresponding \textbf{exit edge}. Note that the $\e_i$'s are exactly the edges of type $\e$ in $\rho$. Let $\tau_0:=0$ and $\widetilde{\tau}_i:=\tau_{i+1}-\tau_i$ for $i\geq 0$ be the \textbf{$i$-th renewal interval}. Let $\epsilon_i:=(\cX_t)_{\tau_i\leq t \leq \tau_{i+1}-1}$ be the $i$-th \textbf{excursion} between two exit times. Let $x_*$ be the end vertex of the oriented edge of label $\e_*$ starting at $\circ$. We will abusively identify $\epsilon_i$ with its projection on $\overline{(\cT,x_*)}$, a canonical representative of the isomorphic rooted trees $(\cT,z)$, $z\in V_{\cT_{\cG}}$ having the same label as $x_*$. Proposition \ref{prop:lawofray} implies that $\tau_i<\infty $ for all $i\geq 1$ a.s.
\\

\begin{proposition}\label{prop:renewal}
The random variables $(\epsilon_i)_{i\geq 1}$ are i.i.d., and there exists $C_6,C_7>0$ such that for all $m,i\geq 0$, 
\begin{equation}\label{eq:exporenewal}
\dP( \widetilde{\tau}_i\geq m)\leq C_6\exp(-C_7m).
\end{equation}

\end{proposition}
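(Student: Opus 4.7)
Our plan is to derive the i.i.d.\ claim from a regeneration argument that exploits the homogeneity of the universal cover, and the exponential tail by decomposing $\widetilde{\tau}_i$ into a random number of level-crossing times whose tails are already controlled.

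First, for the i.i.d.\ property, although $\tau_i$ is not strictly a stopping time (it peeks at the future), the definition of an exit time implies that after $\tau_i$ the walk lives forever in the subtree $\cT_{\cX_{\tau_i+1}}$, whose root has the same label as $x_*$; by the definition of $\overline{(\cT,x_*)}$, this subtree is canonically isomorphic to $\overline{(\cT,x_*)}$ as a weighted labelled rooted tree. The cleanest way to formalize the regeneration is to condition on the ray $\rho$: by Proposition \ref{prop:lawofray}, $\rho$ has a tractable Markovian description, and below $\rho$ the excursions between successive levels are conditionally independent (they live in pairwise disjoint subtrees and only interact through $\rho$). The excursions $\epsilon_i$ are concatenations of such level excursions between two $\e_*$-labelled edges of $\rho$, so each $\epsilon_i$ has the same law as $\epsilon_1$ and is independent of $(\epsilon_0,\ldots,\epsilon_{i-1})$, once we use the homogeneity of the subtrees to identify them with $\overline{(\cT,x_*)}$.

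For the exponential tail, set $N_i := j_{i+1}-j_i$, the number of levels crossed between the $i$-th and $(i+1)$-th exits, so that
\[
\widetilde{\tau}_i \;=\; \sum_{k=0}^{N_i -1} \widetilde{\theta}_{j_i +k}.
\]
By Proposition \ref{prop:lawofray}, the labels of the successive upward edges of $\rho$ form an irreducible NBRW on the finite graph $\widehat{\cG}$; therefore the return time to $\e_*$, which is exactly $N_i$, is a return time for an irreducible finite-state Markov chain, so $\EE[e^{a N_i}]<\infty$ for some $a>0$. By Corollary \ref{cor:expomomentslasttimes}, each $\widetilde{\theta}_j$ has exponential tails with rate $C_4$, uniformly in the label of $\rho_j$. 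Conditionally on $\rho$ the variables $\widetilde{\theta}_{j}$ are independent (again, disjoint subtrees), so for $\lambda>0$ sufficiently small there exists $C>1$ with $\EE[e^{\lambda \widetilde{\theta}_j}\mid \rho]\leq C$ for every $j$, and hence
\[
\EE\bigl[e^{\lambda \widetilde{\tau}_i}\mid \rho\bigr] \;\leq\; C^{N_i}.
\]
Taking expectations and choosing $\lambda$ small enough so that $\log C < a$ gives $\EE[e^{\lambda \widetilde{\tau}_i}]<\infty$, and Markov's inequality yields \eqref{eq:exporenewal}.

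The delicate point is the i.i.d.\ statement, since $\tau_i$ looks forward in time; the crux is to formulate the regeneration correctly, which is why we condition on $\rho$ and exploit the rigid homogeneity of the universal cover. Once this is in place, the exponential tail is a routine marriage of a finite-state Markov chain return-time estimate with the uniform level-crossing control of Corollary \ref{cor:expomomentslasttimes}.
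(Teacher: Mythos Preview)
Your argument is correct and rests on the same two ingredients as the paper---the regeneration structure at the last-exit levels $\theta_j$ and the exponential return time of the finite-state NBRW $(\ell(\rho_j))_j$ to $\e_*$---but combines them differently. The paper bounds $\dP(\widetilde{\tau}_i\geq n)$ by the union $\dP(\theta_{\lfloor n/2\mathcal{E}\rfloor}\geq n)+\dP(\tau_2\geq\theta_{\lfloor n/2\mathcal{E}\rfloor})$: the first term is handled by a Chernoff bound after stochastically dominating $\theta_k$ by a sum of i.i.d.\ copies of a law with the tail of Corollary~\ref{cor:expomomentslasttimes}, and the second by irreducibility of the NBRW on $\widehat{\cG}$. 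You instead write $\widetilde{\tau}_i=\sum_{k<N_i}\widetilde{\theta}_{j_i+k}$, bound the conditional MGF given $\rho$ by $C^{N_i}$, and average over $N_i$; this is slightly more direct. For the i.i.d.\ part the paper avoids conditioning on $\rho$ altogether: it shows that the post-$\tau_i$ walk, viewed in $\overline{(\cT,x_*)}$, has a fixed law (a walk conditioned never to leave that subtree) independent of the past, which gives identical distribution and independence in one stroke.

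Two points in your write-up deserve tightening. The justification ``disjoint subtrees'' for the conditional independence of $(\widetilde{\theta}_j)_j$ given $\rho$ is not literally correct: the loops from $\xi_{j+1}$ that constitute $\widetilde{\theta}_j$ may enter $\cT_{\xi_{j+2}}$, the very subtree in which $\widetilde{\theta}_{j+1}$ unfolds. The conditional independence is genuine but comes from a last-exit decomposition at $\xi_{j+1}$ (the post-$\theta_{j+1}$ process is a fresh conditioned walk, independent of the pre-$\theta_{j+1}$ trajectory), not from spatial disjointness. Second, Corollary~\ref{cor:expomomentslasttimes} conditions only on the label of a single edge, whereas your MGF bound requires $\EE[e^{\lambda\widetilde{\theta}_j}\mid\rho]\leq C$ uniformly over the full ray $\rho$; this does hold, because the conditional law of $\widetilde{\theta}_j$ given $\rho$ depends only on the labels of $\rho_j$ and $\rho_{j+1}$ (finitely many cases, each controlled by the same Green-function estimate underlying Proposition~\ref{prop:greenexpo}), but that reduction should be stated rather than left implicit.
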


\begin{proof}
Note that for all $i\geq 1$, 
\begin{equation}\label{eq:indepexcursions}
(\cX_{\tau_i +t})_{t\geq 0} \overset{law}{=} (\overline{\cX}_t)_{t\geq 0},
\end{equation}
where $(\overline{\cX}_t)$ is a RW on $(\cT_{\cG}, \circ)$ starting at $x_*$ and conditioned on not leaving  $\overline{(\cT,x_*}$. Hence the variables $\epsilon_i$ all have the same distribution.
\\
As for the independence, note that for all $j\geq 1$, conditionally on $\tau_1, \ldots, \tau_j$ and $\cX_0, \ldots, \cX_{\tau_j}$, the RW $(\cX_t)_{t\geq \tau_j}$ has the law of $(\overline{\cX}_t)_{t\geq 0}$. In particular, $\epsilon_j$ is independent from the sigma-algebra $\sigma\left((\epsilon_0, \ldots, \epsilon_{j-1})\right)$. From this, we deduce that the $\epsilon_i$'s are together independent.
\\
Remark that conditionally on the label of the oriented edge $(\cX_{\theta_{i}-1}, \cX_{\theta_i})$, $\tilde{\theta}_i$ is independent of $\sigma\left((\tilde{\theta_{k}})_{0\leq k \leq i-1}\right)$. By Corollary \ref{cor:expomomentslasttimes}, there exists a probability distribution $\Theta$ on $\dN$ with expectation $\mathcal{E}:=\dE[\Theta]<+\infty$ such that $\dP(\Theta \geq n )\leq C_5\exp(-C_4n)$ for all $n\geq 1$ and such that $\theta_i$ is stochastically dominated by $Z_1+\ldots +Z_i$, where the $Z_j$'s i.i.d. variables of law $\Theta$.
\\
For some constants $K,K'>0$, for all $n\geq 1$, 
\begin{align*}
\dP(\theta_{\lfloor n/2\mathcal{E}\rfloor}\geq n)\leq &\dP(Z_1+\ldots +Z_{\lfloor n/2\mathcal{E}\rfloor} \geq n)
\\
\leq & \frac{\dE[\exp(tZ_1)]^{\lfloor n/2\mathcal{E}\rfloor}}{e^{tn}}
\\
\leq & \left(\frac{\dE[\exp(tZ_1)]}{e^{2\mathcal{E}t}}\right)^{\lfloor n/2\mathcal{E}\rfloor}
\end{align*}
for all $t>0$ by Markov's inequality. Note that for $t<C_4$, $\dE[\exp(tZ_1)]$ is finite, and that for $t\rightarrow 0$, $\dE[\exp(tZ_1)]=\sum_{p\geq 0} t^p\frac{\dE[Z_1^p]}{p!}= 1+\mathcal{E}t+O(t^2)$. Thus, there exists $t_0>0$ such that $ 0<r<1$ where $r:=\dE[\exp(t_0Z_1)]\exp(-2\mathcal{E}t_0)$, and we have $\dP(\theta_{\lfloor n/2\mathcal{E}\rfloor}\geq n)\leq r^{\lfloor n/2\mathcal{E}\rfloor}$.
\\
By Proposition \ref{prop:lawofray}, since the NBRW on $\widehat{\cG}$ is irreducible, it is standard that there exist constants $\alpha, \beta >0$ such that for all $m\geq 0$, 
$$\dP(\tau_2\geq \theta_m)\leq \alpha \exp(-\beta m).$$
Hence, for all $i\geq 0$, since the $(\widetilde{\tau}_i)_{i\geq 1}$ are i.i.d.,  
\begin{align*}
\dP(\widetilde{\tau}_i\geq n) &\leq \dP(\widetilde{\tau}_0 +\widetilde{\tau}_1 \geq n)
\\
&\leq \dP(\theta_{\lfloor n/2\mathcal{E}\rfloor} \geq n) +\dP(\tau_2\geq \theta_{\lfloor n/2\mathcal{E}\rfloor} )
\\
&\leq r^{\lfloor n/2\mathcal{E}\rfloor} + \alpha \exp (-\beta \lfloor n/2\mathcal{E}\rfloor).
\end{align*}
This concludes the proof.
\end{proof}

Recall that $W_t:=-\log(W(\cX_t))$ is the log-weight of the RW. We now prove Theorem \ref{thm:weightofray}.

\begin{proof}[Proof of Theorem 3]
Let $W^{(\epsilon)}_i:=\log(W(\cX_{\tau_i}))-\log(W(\cX_{\tau_{i+1}}))$ for $i\geq 1$. By Proposition \ref{prop:renewal}, the $(\We_i)_{i\geq 1}$'s are i.i.d., and $\dP(\We_1\geq n)\leq \dP(\widetilde{\tau}_1\geq -n/\log(w_{min}))=O(\exp(C'n/\log(w_{min})))$ for all $n\geq 1$, so that $\We_1$ has moments of any order. Let $h_w:=\dE[\We_1]$. Clearly, $\We_1> 0$ a.s., so that $h_w>0$.
\\
Now, cutting the trajectory of the RW into excursions between exit edges, we have
$$W_t=-\log(W(\cX_{\tau_1}))-(\log(W(\cX_t))+\log(W(\cX_{\tau_{r_t}}))+\sum_{i=1}^{r_t} \We_i,$$
where $r_n:=\max\{i\geq 0, \, \tau_i \leq n\}$. 
\\
By Proposition \ref{prop:renewal} again, $\log(W(\cX_{\tau_1}))+\log(W(\cX_t))-\log(W(\cX_{\tau_{r_t}}))=o(\sqrt{t})$ with high probability, so that by Slutsky's Lemma, it is enough to show the existence of $h,\sigma >0$ such that 
$$\frac{\sum_{i=1}^{r_t-1} \We_i-ht}{\sqrt{t}} \overset{law}{\to} \mathcal{N}(0,\sigma^2).$$
Let $\tau:=\dE[\widetilde{\tau}_1]\in (0, \infty)$, $\overline{\We}_i:=\We_i-h_w$ and $\overline{\tau}_i:=\widetilde{\tau}_i-\tau$. For all $\lambda \in \dR$, 
\begin{align*}
\dP\left(\frac{\sum_{i=1}^{r_t-1} \We_i-\frac{h_wt}{\tau}}{\sqrt{t}} \leq \lambda\right)=&\, \dP\left(\sum_{i=1}^{r_t-1} \We_i\leq \frac{h_w t}{\tau}+\lambda\sqrt{t} \right)
\\
=& \dP\left(\frac{h_wr_t+\sum_{i=1}^{r_t-1} \overline{\We}_i}{\tau r_t+\sum_{i=1}^{r_t-1} \overline{\tau}_i} \times \frac{\sum_{i=1}^{r_t-1} \widetilde{\tau}_i}{t} \leq \frac{h_w}{\tau}+\frac{\lambda}{\sqrt{t}} \right)
\\
=& \dP\left(\frac{h_w}{\tau}\left( \frac{1+\sum_{i=1}^{r_t-1} \overline{\We}_i/(h_wr_t)}{1+\sum_{i=1}^{r_t-1} \overline{\tau}_i/(\tau r_t)}\right) \frac{\sum_{i=1}^{r_t-1} \widetilde{\tau}_i}{t}\leq \frac{h_w}{\tau}+\frac{\lambda}{\sqrt{t}} \right).
\end{align*}
The series associated to the sequences $(\We_i)_{i\geq 1}$ and $(\widetilde{\tau}_i)_{i\geq 1}$ either verify a CLT, or are deterministic. Thus, $\sum_{i=1}^{r_t-1} \overline{\We}_i =o(t^{2/3})$ and $\sum_{i=1}^{r_t-1} \overline{\tau}_i =o(t^{2/3})$ with high probability as $t\rightarrow +\infty$. And, the strong law of large numbers implies that $r_t/t\rightarrow 1/\tau$ a.s. Hence, 
$$ \frac{1+\sum_{i=1}^{r_t-1} \overline{\We}_i/(h_wr_t)}{1+\sum_{i=1}^{r_t-1} \overline{\tau}_i/(\tau r_t)}=1+\frac{1}{r_t}\sum_{i=1}^{r_t-1}\left(\frac{\overline{\We}_i}{h_w}-\frac{\overline{\tau}_i}{\tau} \right)+R_t,$$
where $R_t=o(t^{-2/3})$ with high probability. 
\\
Furthermore, $\frac{\sum_{i=1}^{r_t-1} \widetilde{\tau}_i}{t}=1+ \frac{(\tau_{r_t}-t) -\tau_1}{t}=1+o(t^{-2/3})$ with high probability, according to Proposition \ref{prop:renewal}. Hence
$$\frac{h_w}{\tau}\left( \frac{1+\sum_{i=1}^{r_t-1} \overline{\We}_i/(h_wr_t)}{1+\sum_{i=1}^{r_t-1} \overline{\tau}_i/(\tau r_t)}\right) \frac{\sum_{i=1}^{r_t-1} \widetilde{\tau}_i}{t}=
\frac{h_w}{\tau}+ \frac{h_w}{\tau r_t}\sum_{i=1}^{r_t-1}Z_i+R'_t,$$
where $Z_i:=\frac{\overline{\We}_i}{h_w}-\frac{\overline{\tau}_i}{\tau}$ and $R'_t=o(t^{-2/3})$ with high probability, and
\begin{align*}
\dP\left(\frac{\sum_{i=1}^{r_t-1} \We_i-\frac{h_wt}{\tau}}{\sqrt{t}} \leq \lambda\right)=& \dP\left( \frac{1}{r_t}\sum_{i=1}^{r_t-1}Z_i\leq \frac{\tau\lambda}{h_w\sqrt{t}}-R'_t\right)
\\
=& \dP\left(\frac{1}{\sqrt{r_t}}\sum_{i=1}^{r_t-1}Z_i\leq \frac{\tau\lambda}{h_w}\sqrt{\frac{r_t}{t}}-R'_t\sqrt{r_t}\right)
\\
=& \dP\left(\frac{1}{\sqrt{r_t}}\sum_{i=1}^{r_t-1}Z_i\leq \frac{\sqrt{\tau}\lambda}{h_w}+R''_t\right),
\end{align*}
where $R''_t=o(1)$ with high probability. But the $Z_i$'s are i.i.d. variables, and we have $\dE[Z_1]=0$ and $\sigma_Z^2:=Var(Z_1)>0$. Indeed by \ref{assump3}, for a fixed trajectory of $\varepsilon_1$ (hence a given value of $\overline{\We}_1$), $\overline{\tau}_1$ can take different values with positive probability, so that $Z_1$ is not deterministic. And $Z_1$ has exponential moments by Proposition \ref{prop:renewal}, so that $\sigma_Z^2$ is finite. Applying the CLT to the series associated to the sequence $(Z_i)_{i\geq 1}$ concludes the proof, and we have 
$$h_{\cT_{\cG}}=\frac{h_w}{\tau} \text{  and  } \sigma^2_{\cT_{\cG}}=\frac{h_w^2\sigma_Z^2}{\tau }.$$
\end{proof}

\begin{proposition}\label{prop:speed}{\emph{\textbf{(Theorems D and E in \cite{nagniwoess})}}}
There exist $s,\sigma_s >0$ such that 

\begin{equation}\label{eq:speedlln}
\frac{he(\cX_t)}{t}\overset{a.s.}{\to}s
\end{equation}

and

\begin{equation}\label{eq:speed}
\frac{he(\cX_t)-st}{\sqrt{t}}\overset{law}{\to}\mathcal{N}(0,\sigma_s^2).
\end{equation}
\end{proposition}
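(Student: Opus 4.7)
My plan is to re-run the renewal argument from the proof of Theorem \ref{thm:weightofray}, replacing the log-weight increments $\We_i$ by height increments
$$H_i := he(\cX_{\tau_{i+1}}) - he(\cX_{\tau_i}), \qquad i \geq 1.$$
Since $H_i$ is a functional of the excursion $\epsilon_i$, Proposition \ref{prop:renewal} gives that the $(H_i)_{i\geq 1}$ are i.i.d. By definition of the exit times, after $\tau_i$ the walk takes an upward step and never returns to $\cX_{\tau_i}$, so $H_i \geq 1$; and because $he$ changes by at most one per step, $H_i \leq \widetilde{\tau}_i$. Thus (\ref{eq:exporenewal}) gives $H_i$ exponential moments. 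Set $h_H := \dE[H_1] \in [1,\infty)$ and $\tau := \dE[\widetilde{\tau}_1] \in (0,\infty)$.

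For (\ref{eq:speedlln}), let $r_t := \max\{i \geq 0 : \tau_i \leq t\}$. The bound
$$\Big| he(\cX_t) - he(\cX_{\tau_1}) - \sum_{i=1}^{r_t - 1} H_i \Big| \leq \widetilde{\tau}_{r_t},$$
combined with the strong laws $\sum_{i=1}^n H_i/n \to h_H$, $\tau_n/n \to \tau$ a.s., $r_t/t \to 1/\tau$ a.s., and $\widetilde{\tau}_{r_t} = o(t)$ a.s.\ (Borel--Cantelli applied to the exponential tails of $\widetilde{\tau}_i$), yields $he(\cX_t)/t \to h_H/\tau =: s > 0$.

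For the CLT (\ref{eq:speed}) I would replay verbatim the algebraic manipulation at the end of the proof of Theorem \ref{thm:weightofray}, with $H_i$ in place of $\We_i$. Writing $Y_i := (H_i - h_H)/h_H - (\widetilde{\tau}_i - \tau)/\tau$, the boundary corrections $he(\cX_{\tau_1})$ and $he(\cX_t) - he(\cX_{\tau_{r_t}})$ are $o(\sqrt t)$ with high probability (each is bounded by a renewal interval with exponential moments), so by Slutsky it suffices to prove the i.i.d.\ CLT for $\frac{1}{\sqrt{r_t}} \sum_{i=1}^{r_t - 1} Y_i$. This holds since $\dE[Y_1]=0$ and $Y_1$ has exponential moments, and one obtains
$$\sigma_s^2 = \frac{h_H^2\, \VV(Y_1)}{\tau}.$$

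The hard part will be verifying $\sigma_s > 0$, i.e.\ showing $\VV(Y_1) > 0$, which amounts to ruling out any affine relation between $H_1$ and $\widetilde{\tau}_1$. Following the analogous step in the proof of Theorem \ref{thm:weightofray}, I would use \ref{assump3}: since every oriented edge has positive weight, given any excursion trajectory achieving height increment $H_1$ one may insert an extra back-and-forth step at an interior vertex of the excursion distinct from $\cX_{\tau_1}$ and $\cX_{\tau_2}$, producing a second valid excursion with the same $H_1$ but $\widetilde{\tau}_1$ increased by $2$, and both occur with positive probability. This forces $\VV(Y_1) > 0$ and completes the argument.
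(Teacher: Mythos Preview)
Your proposal is correct and matches the paper's approach exactly: the paper's proof reads in full ``The proof is similar to that of Theorem \ref{thm:weightofray}. Again, the fact that $\sigma_s>0$ is due to \ref{assump3},'' and you have faithfully spelled out the renewal argument with $H_i$ replacing $\We_i$ together with the \ref{assump3}-based nondegeneracy. One minor care point on that last step: insert the back-and-forth at $\cX_{\tau_1+1}$ toward one of its children (rather than at an unspecified interior vertex, which may not exist in a minimal excursion or may be adjacent to $\cX_{\tau_1}$), or simply exploit the laziness of the walk.
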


\begin{proof}
The proof is similar to that of Theorem \ref{thm:weightofray}. Again, the fact that $\sigma_s>0$ is due to \ref{assump3}.
\end{proof}

\begin{remark}\label{rem:unifCLTcover}
The convergences in Theorem \ref{thm:weightofray} and Proposition \ref{prop:speed} do not depend on the choice of $v_*$. Moreover, Theorem \ref{thm:weightofray}, Proposition \ref{prop:speed} and Corollary \ref{cor:weightofray} give
\begin{equation}\label{eq:linkentropies}
h_{\cT_{\cG}}=sh_{W}.
\end{equation}

\end{remark}
We speak about ways of computing $h_{\cT_{\cG}}$ and $s$ in the Appendix 1 (Section \ref{sec:comput}).

\section{Proofs of Proposition \ref{prop:borninf} and Theorem \ref{thm:bornsup}}\label{sec:proofs}

\subsection{The lower bound: Proof of Proposition \ref{prop:borninf}}\label{subsec:lowerbound}
For all $n\geq 1$, let $\cG_n$ be an arbitrary $n$-lift of $\cG$. The proof goes as follows: we couple a RW $(X_t)_{t\geq 0}$ on $\cG_n$ with a RW $(\cX_t)_{t\geq 0}$ on $\cT_{\cG}$. The estimate provided by Theorem \ref{thm:weightofray} on $W_t$ implies that $\cX_t$ is concentrated on $o(n)$ vertices with positive probability for $t$ close to $h^{-1}\log n$, and so is $X_t$. This implies a lower bound on $d_{TV}(P^t(X_0,\cdot),\pi_n)$, since $\pi_n$ is almost uniform on $V_n$.
\\
Fix $x\in V_n$ and $\circ\in V_{\cT_{\cG}}$ such that the label of $\circ$ is the type of $x$. Let $(\cX_t)_{t\geq 0}$ be a RW on $(\cT_{\cG},\circ)$ starting at the root. We couple $(\cX_t)$ with a RW $(X_t)_{t\geq 0}$ on $\cG_n$ in the following manner: let $X_0=x$ a.s. For all $t\geq 0$, $X_{t+1}$ is the unique vertex of $V_n$ such that there is an oriented edge from $X_t$ to $X_{t+1}$ whose type is the label of $(\cX_t,\cX_{t+1})$. Clearly, $(X_t)$ is well defined and is indeed a RW on $\cG_n$. 

We define a map $\phi$ from the set of oriented paths starting at $x$ in $\cG_n$ to that of oriented paths starting at $\circ$ in $\cT_{\cG}$: for all $m\geq 1$ and $p:=(\e_1, \ldots,\e_m)$ an oriented path of length $m$ in $\cG_n$, $\phi(p)=(\e_1', \ldots, \e_m')$ where $\e_1'$ is the unique oriented edge such that its initial vertex is $\circ$, and the label of $\e_1'$ is the type of $\e_1$, and for all $i\geq 2$, $\e_i'$ is the unique edge such that its initial vertex is the end vertex of $\e_{i-1}'$ and the label of $\e_i'$ is the type of $\e_i$. 

\begin{remark}\label{rem:correscovergraph}
For all $y\in V_{\cT_{\cG}}$, if $p_1$ (resp. $p_2$) is an oriented path of length $t\geq 1$ from $\circ$ to $y$, then $\phi^{-1}(p_1)$ and $\phi^{-1}(p_2)$ end at the same vertex of $V_n$. The converse is not true as soon as $\cG_n$ has cycles: for every $x'\in V_n$, there are two distinct non-backtracking paths $p_1$ and $p_2$ from $x$ to $y'$, and the oriented paths $\phi(p_1)$ and $\phi(p_2)$ lead to two different vertices $y'_1$ and $y'_2$ of $(\cT_{\cG},\circ)$. 
\end{remark}

Now, let $\lambda>\lambda' \in \dR$ and define $t_n:=\lfloor \frac{\log n}{h_{\cT_{\cG}}}+\lambda' \frac{\sigma_{\cT_{\cG}}}{h_{\cT_{\cG}}^{3/2}}\sqrt{\log n}\rfloor$. By Theorem \ref{thm:weightofray} and Proposition \ref{prop:speed}, 
$$\liminf_{n\rightarrow +\infty }\dP(\{W_{t_n}\leq h_{\cT_{\cG}}t_n -\lambda\sigma_{\cT_{\cG}} \sqrt{t_n} \} \cap \{\vert he(\cX_{t_n}) - st_n\vert > t_n^{2/3})  \})\geq \Phi(\lambda),$$
where $\Phi(\lambda):=\frac{1}{\sqrt{2\pi}}\int_{\lambda}^{+\infty}e^{-\frac{u^2}{2}}du$. Let $A_n:= \{W_{t_n}\leq h_{\cT_{\cG}}t_n -\lambda\sigma_{\cT_{\cG}} \sqrt{t_n}  \}\cap \{\vert he(\cX_{t_n}) - st_n\vert > t_n^{2/3}  \}$, and define $U_n:=\{y\in V_{\cT_{\cG}}\, \vert \, A_n \cap \{\cX_{t_n}=y\} \neq \emptyset\}$. Note that for all $R>0$, 

$$\sum_{y\in V_{\cT_{\cG}},\\ he(y)=R} \exp(-W(y))=1.$$
Hence for $n$ large enough,
\begin{align*}
\vert U_n \vert\leq &(2t_n^{2/3}+1)\exp(h_{\cT_{\cG}}t_n -\lambda\sigma_{\cT_{\cG}} \sqrt{t_n})
\\
\leq & \frac{3\log n}{h_{\cT_{\cG}}} \exp \left(\log n+(\lambda'-\lambda)\frac{\sigma_{\cT_{\cG}}}{2\sqrt{h_{\cT_{\cG}}}}\sqrt{\log n}\right)
\\
\leq & n \exp \left((\lambda'-\lambda)\frac{\sigma_{\cT_{\cG}}}{4\sqrt{h_{\cT_{\cG}}}}\sqrt{\log n}\right).
\end{align*}
Let $B_n=\{ x'\in V_n\, \vert \, A_n \cap \{ X_n=x'\}\neq \emptyset\}$.
By Remark \ref{rem:correscovergraph}, $\vert B_n\vert \leq \vert U_n\vert$, hence 
\begin{align*}
d_{TV}(P^n(x,\cdot), \pi_n)\geq &\sum_{x'\in B_n}(P^n(X_0,x')-\pi_n(x'))
\\
\geq & \dP(A_n)-K n^{-1}\vert B_n\vert
\\
\geq & \dP(A_n) -K\exp \left((\lambda'-\lambda)\frac{\sigma_{\cT_{\cG}}}{4\sqrt{h_{\cT_{\cG}}}}\sqrt{\log n}\right).
\end{align*}
for some constant $K>0$. Thus, we obtain $\liminf_{n\rightarrow +\infty} d_{TV}(P^n(y,\cdot), \pi_n) \geq \Phi(\lambda).$ Note that this result is uniform in $y$, due to Remark \ref{rem:unifCLTcover}. Therefore,
$$ \liminf_{n\rightarrow +\infty} \inf_{y\in V_n} d_{TV}(P^n(y,\cdot), \pi_n) \geq \Phi(\lambda),$$ 
and this concludes the proof.  We have even proved a more precise statement: for every $\varepsilon\in (0,1)$, 
$$
\liminf_{n\rightarrow +\infty} \frac{t_{min}^{(n)}(\varepsilon)-h^{-1}\log(n)}{\sigma \sqrt{\log(n)}}\geq \Phi^{-1}(\varepsilon),
$$
with $h=h_{\cT_{\cG}}$ and $\sigma= \frac{\sigma_{\cT_{\cG}}}{h_{\cT_{\cG}}^{3/2}}$.

\subsection{The upper bound: Proof of Theorem \ref{thm:bornsup}}\label{subsec:upperbound}
From now on, we focus on the case where $\cG_n$ is a uniform random lift of $\cG$. The proof consists of three parts, as detailed in Section \ref{subsec:tools}.

\subsubsection*{a) Almost mixing for a typical starting point}\label{subsubsec:coupling}
In this paragraph, we prove that with a large probability, $\cG_n$ is such that a RW  $(X_t)_{t\geq 0}$ started at a uniformly chosen vertex $x\in V_n$ has a large probability to stay on some subtree $T$ of $\cG_n$ for $t_n+O(\sqrt{\log n})$ steps. This allows us to couple $(X_t)_{t\geq 0}$ with a RW $(\cX_t)_{t\geq 0}$ on $\cT_{\cG}$. 
\\
\\
\textbf{The construction.} Fix $n\in \dN$, take $x\in V_n$ and $\circ\in V_{\cT_{\cG}}$ such that the type of $x$ is the label of $\circ$.
\\
We reveal the structure of $\cG_n$ edge by edge, starting from $x$ and making use of Lemma \ref{lem:sequential}. At every moment of the exploration, let $\phi$ be a bijection between the vertices of $T$ and those of a subtree $\mathfrak{T}$ of $(\cT_{\cG},\circ)$ such that $\phi(x)=\circ$ and such that there is an edge between $x_1$ and $x_2$ in $T$ if and only if there is an edge between $\phi(x_1)$ and $\phi(x_2)$ in $(\cT_{\cG},\circ)$.
\\
For all $j\geq 0$, at step $j$, let $D_j$ be the height-$j$ level of $T$, and reveal the pairings of the half-edges attached to the vertices in $D_j$. Erase the edges closing a cycle, and place a mark at the corresponding vertices in $D_j\cup D_{j+1}$. Place a mark at vertices $x'\in D_{j+1}$ such that $\phi(x')\not \in  \mathcal{N}(\beta)$, where for all $\delta >0$, $\mathcal{N}(\delta)=\{y\in V_{\cT_{\cG}}, \, W(y)\geq n^{-1}\exp(\delta\sqrt{\log n} )\}$, and $\beta >0$ is chosen arbitrarily. Do not consider those vertices for further steps. 
\\
This construction of $T$ depends on $\beta$. Note by the way that for a fixed representation of $\cG_n$, one does not have necessarily $T(\beta)\subset T(\beta ')$ if $\beta' <\beta$. 
\\
\\
\textbf{The exploration.} Now, fix $\varepsilon\in (0,1)$ and let $g(\varepsilon)=\min(\varepsilon/2, \varepsilon^2)$. For $n\in \dN$, let
\begin{equation}\label{eq:coupling-R}
R:=\lfloor C \log\log n\rfloor
\end{equation}
with $C$ large enough so that $C_1\exp(-C_2R)=o(\log n^{-1})$. Let $(\cX_t)_{t\geq 0}$ be a RW on $(\cT_{\cG},\circ)$ started at the root, and define $X_t:=\phi^{-1}(\cX_t)$ for all $t$ such that $\cX_t\in \phi(T)$. Keep all the notations of Section \ref{sec:univcover} for $(\cX_t)$, in particular, denote $\xi$ its ray to infinity. 
For all $j\geq 1$, let $t_j:=\inf\{t\geq 0, \cX_t\not \in \phi\left(D_1\cup\ldots \cup D_{R+j-1}\right)\}$ and let $x_j$ be the first vertex in $D_{j+R}$ hit by $(X_t)$. Let $\alpha_j$ be the ancestor of height $R$ of $x_j$ and $O_j$ the $R$-offspring of $\alpha_j$. Stop $(X_t)$:

\begin{itemize}
\item at time $t=0$ if $B(x,R+1)$ contains a cycle,

\item at time $t_{j}$ if a cycle is created while matching the half-edges of vertices in $O_j$, or if $(X_t)$ visits a marked vertex or an ancestor of $\alpha_j$ for $t\leq t_{j+1}$. 
\end{itemize}
\noindent
We say that the exploration is $j$-\textbf{successful} whenever $(X_t)$ does not stop until $t_j$.
If the exploration is $j$-successful, then $(X_t)_{0\leq t\leq t_j}$ is a RW on $\cG_n$.
\\
Note that there are two sources of randomness: the matchings of the half-edges in $\cG_n$ and the trajectory of $(X_t)$. Denote $\dP_{ann}$ the annealed probability on $\cG_n$ and $(X_t)$, and $\dP_{\cG_n}$ the quenched probability on $(X_t)$ conditionally on the realization of $\cG_n$. 

\begin{proposition}\label{prop:goodexplo}
Fix $\beta >0$. There exists $a\in \dR$ such that for $n$ large enough,
\\
$\dP_{ann}\left(\exists k\in \dN \, \vert \, t_k\geq h^{-1}\log n+a \sqrt{\log n}\text{ and the exploration is $k$-successful}\right) \geq 1-\varepsilon$.
\end{proposition}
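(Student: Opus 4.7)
My plan is to control the probability of each failure mode of the exploration separately. Fix $\beta>0$, $\varepsilon>0$, pick $\lambda>0$ large enough that $\Phi(\lambda)<\varepsilon/10$, and choose $a<0$ sufficiently negative at the end (depending on $\beta$ and $\lambda\sigma_{\cT_{\cG}}$). Set $T_n:=h^{-1}\log n + a\sqrt{\log n}$ and $K_n:=\lceil sT_n + T_n^{2/3}\rceil$, so that by Proposition \ref{prop:speed}, $he(\cX_{T_n})\leq K_n$ with probability $1-o(1)$. It suffices to show that the probability that some stopping rule fires before walk-clock time $T_n$ is at most $\varepsilon$; on the complement, the largest $k$ with $t_{k-1}<T_n$ satisfies $t_k\geq T_n$ and the exploration is $k$-successful.

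I decompose the failure into four events: (I) $B(x,R+1)$ is not a tree in $\cG_n$; (II) the matching of the half-edges in some $O_j$ with $j\leq K_n$ closes a cycle; (III) the walk $(\cX_t)$ backtracks enough to visit an ancestor of some $\alpha_j$ during $[t_j,t_{j+1}]$; (IV) $(\cX_t)$ visits a vertex outside $\mathcal{N}(\beta)$ (equivalently, a marked vertex of the weight type) before time $T_n$. For (I)-(II), the key observation is that with $R=\lfloor C\log\log n\rfloor$, the total number of half-edges ever revealed is at most $|B(x,R+1)| + K_n\Delta^R = O((\log n)^{C'})$ for some constant $C'=C'(C,\cG)$. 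Using Lemma \ref{lem:sequential}, at each matching step, the new edge closes a cycle only if it pairs with one of the already revealed unmatched half-edges of the same type, which has conditional probability $O((\log n)^{C'}/n)$. A union bound over the $O((\log n)^{C'})$ matchings yields cycle-failure probability $O((\log n)^{2C'}/n)=o(1)$. For (III), Proposition \ref{cor:sticktoray} gives $\dP(\xi\cap B(\cX_t,R)=\emptyset)\leq C_1 e^{-C_2 R}$, and a union bound over $t\leq T_n$ yields $T_n C_1 e^{-C_2 R} = O((\log n)^{1-CC_2})$, which is $o(1)$ for $C$ large enough; this event keeps the walk close enough to its ray to prevent backtracking down to an ancestor of any $\alpha_j$ (the tree-distance between $\cX_{t_j}$ and any such ancestor being at least $j+1\geq 1$).

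The delicate step is (IV). By Theorem \ref{thm:weightofray}, $W_{T_n} \leq hT_n + \lambda\sigma_{\cT_{\cG}}\sqrt{T_n}$ with probability $\geq 1-\varepsilon/10$; for $a$ sufficiently negative this upper bound is at most $\log n - \beta\sqrt{\log n}$, so $\cX_{T_n}\in\mathcal{N}(\beta)$. To upgrade this single-time bound to a uniform-in-$t$ bound --- necessary because $W_t$ is not monotone when the walk backtracks --- I work on the complement of (III): the walk then stays within $R$ of its ray $\xi$, so formula (\ref{eq:defofweightvertex}) combined with the bound $\hat{w}\geq 1/(\Delta w_{min}^{|\E|})$ from Lemma \ref{lem:allexitistransient} gives $-\log W(\cX_t)\leq -\log W(\xi_{r(t)}) + O(R)$ for some $r(t)$ with $|r(t)-he(\cX_t)|\leq R$. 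Applying Corollary \ref{cor:weightofray} to the ray and the speed CLT of Proposition \ref{prop:speed} to control $r(t)$, together with a union bound over a polynomial grid in $[0,T_n]$ (extended to every $t$ by noting that $W_t$ changes by $O(1)$ per step), yields $\max_{t\leq T_n} W_t \leq \log n - \beta\sqrt{\log n}$ with probability $\geq 1-\varepsilon/4$. Summing the four contributions bounds the total failure probability by $\varepsilon$ for $n$ large. The main obstacle is step (IV), whose uniform-in-$t$ extension combines three CLTs (for $\log W(\xi_t)$, the speed, and the walk's log-weight) with the ray-localization event to absorb the fluctuations of $W_t$ along backtracks.
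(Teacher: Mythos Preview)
Your four-way decomposition mirrors the paper's exactly, and (I) is fine. But two of the remaining three steps have real gaps.

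\textbf{(III) is incorrect as stated.} Ray-localization (Proposition~\ref{cor:sticktoray}) says $d(\cX_t,\xi)\leq R$, but this does \emph{not} prevent the walk from descending to an ancestor of $\alpha_j$: the walk can slide down the ray itself while remaining at distance $0$ from it. (Also, the distance from $x_j$ at height $R+j$ to an ancestor of $\alpha_j$ is $\geq R+1$, not ``$j+1\geq 1$''.) The paper uses Proposition~\ref{prop:sticktoposition} instead: from $x_j$, the probability of ever hitting $\alpha_j$ (distance $R$ away) is $\leq C_1e^{-C_2R}=o((\log n)^{-1})$, and one sums over the $O(\log n)$ spatial indices $j\leq J_0$.

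\textbf{(IV) has a gap in the union bound.} At each grid point the CLTs you invoke give a failure probability that is a \emph{constant} $\Phi(\lambda)$, so a union bound over polylog points does not yield $\varepsilon/4$. The paper sidesteps this entirely via a monotonicity you overlook: $j\mapsto -\log W(\xi_j)$ is increasing along the ray (each factor in (\ref{eq:defofweightvertex}) is at most $1$), so $\{\exists k\leq J_0:\xi_k\notin\mathcal{N}(\beta)\}=\{\xi_{J_0}\notin\mathcal{N}(\beta)\}$, which needs only one application of Corollary~\ref{cor:weightofray}. Your own reduction $-\log W(\cX_t)\leq -\log W(\xi_{r(t)})+O(R)$ is correct, and combined with this monotonicity and $r(t)\leq J_0$, it gives the uniform-in-$t$ bound without any grid.

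\textbf{(II), minor.} Your polylog edge count presumes that only the $O_j$'s are revealed. In the paper's construction the full levels $D_j$ of the truncated tree $T$ are revealed, which can contain up to $n\exp(-\beta\sqrt{\log n})$ vertices per level; this is why the paper's per-step cycle probability is $O(\Delta^R\exp(-\beta\sqrt{\log n}))$ rather than $O((\log n)^{C'}/n)$. Either bound is $o((\log n)^{-1})$, so this is not fatal, but your count does not match the exploration as defined.
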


\begin{proof}
For $j\geq 1$, the exploration stops at time $t_j$ only if:
\begin{itemize}
\item a cycle appears while matching the $R+1$-offspring of $\alpha_j$, or
\item $(X_t)$ visits $\alpha_j$ between its respective first visits at $x_j$ and $x_{j+1}$, or 
\item $\phi(\alpha_j)\not \in\xi$, 
or 
\item $\xi_j \not\in \mathcal{N}(\beta)$,
\end{itemize}
and for all $j'\leq j$, none of those conditions is met. Let $E_j:=\{\text{the exploration stops at time } t_j\}$. For $J\in \dN$, on $F_J:=(\cup_{j=0}^{J}E_j)^c$, the exploration is $J$-successful. 
\\
Let $J_0:=\lfloor h^{-1}s\log n+\gamma \sqrt{\log n}\rfloor$ where $\gamma\in \dR$ is such that $\dP(\xi_{J_0}\not\in \cN(\beta))\geq 1-\varepsilon /4$. By Proposition \ref{cor:weightofray} and by (\ref{eq:linkentropies}), this is possible if one takes $\gamma$ small enough. Let us prove that 
\begin{equation}\label{eq:goodexplo}
\dP_{ann}(\cup_{j=0}^{J_0}E_j) \leq \varepsilon /2.
\end{equation}
\noindent
The probability that a cycle arises while revealing $B(x,R+1)$ is at most 
$$\Delta^{R+2}\frac{\Delta^{R+2}}{\mu\vert V\vert n-2\Delta^{R+2}}=O\left((\log n)^{2C\log(\Delta)} /n\right),$$ 
where $\mu$ is the mean degree of a vertex in $\cG$. Indeed, the height-$k$ level of $T$ contains at most $\Delta^k$ vertices, so that we proceed to at most $\Delta +\Delta^2+\ldots +\Delta^{R+1}\leq \Delta^{R+2}$ pairings of half-edges. Hence for each pairing, there remain at least $\mu \vert V\vert n -2\Delta^{R+2}$ unmatched half-edges belonging to vertices not in $T$, and at most $\Delta^{R+2}$ unmatched half-edges belonging to vertices in $T$.
\\
Now, suppose we are on $E_j$ for some $j\geq 1$. $O_j$ contains at most $\Delta^{R}$ vertices. There are at most $\Delta^{R+1}+\Delta n\exp{(-\beta\sqrt{\log n})}$ unpaired half-edges that belong either to vertices in $O_j$ or to already explored vertices that are not yet marked (those vertices are at height $R+j$ in $T$ but not in $O_j$). Hence, the probability of creating a cycle while proceeding to the pairings of this offspring is $O(\Delta^{R}\exp(-\beta\sqrt{\log n}))=o\left((\log n)^{-1}\right).$ Let $E'_j\subset E_j$ be the event that no cycle arises during those matchings. We have $\dP_{ann}(E_j\setminus E'_j)=o\left((\log n)^{-1}\right)$, uniformly in $j$.
\\
Let $E''_j \subset E'_j$ be the event that $(X_t)$ does not visit $\alpha_j$ between its respective first visits at $x_j$ and $x_{j+1}$.
By Proposition \ref{prop:sticktoposition} and our choice for $R$ in (\ref{eq:coupling-R}), $\dP_{ann}(E'_j\setminus E''_j)\leq C_1\exp(-C_2R)=o\left((\log n)^{-1}\right)$. Again, this bound is uniform in $j$. 
\\
Let $E^{(3)}_j\subset E''_j$ be the event that $\phi(\alpha_j)\in\xi$. Again by Proposition \ref{prop:sticktoposition} and (\ref{eq:coupling-R}), we have $\dP_{ann}(E''_j\setminus E^{(3)}_j)\leq C_1\exp(-C_2R)=o\left((\log n)^{-1}\right)$. For $n$ large enough, for all $j \leq J_0$, the choice of $J_0$ implies
$$\dP_{ann}\left(\exists  k\in \{1, \ldots, j, \} \, \xi_k \not\in \mathcal{N}(\beta)\right)\leq \dP_{ann}(\xi_{J_0}\not\in \mathcal{N}(\beta))\leq\varepsilon/4.$$
\\
Hence, if $E^{(4)}_j:=E^{(3)}_j \cap \{\xi_j \in \mathcal{N}(\beta)\} $, 
$$\dP_{ann}\left(\cup_{j=1}^{J_0} (E^{(3)}_j \setminus E^{(4)}_j)\right) \leq \varepsilon /4.$$ Since $E^{(4)}_j \subset E^{(3)}_j$, $(\phi(X_t))_{t\leq t(j)}$ is never more than $R$ steps away from $\xi_j$, hence $(\cX_t)_{t\leq t(j)}$ does not visit a vertex of too small weight on $E^{(4)}_j$. Since $E^{(4)}_j \subset E'_j$, no cycle is created, so that the exploration is not stopped on $E^{(4)}_j$. But $E^{(4)}_j\subset E_j$, hence $E^{(4)}=\emptyset$.
\\
All in all, we obtain that uniformly in $j\in \{1, \ldots, J_0\}$, $\dP_{ann}(E_j\setminus E^{(3)}_j)=o\left((\log n)^{-1}\right)$, so that 
$\dP_{ann}(F_{J_0})\geq 1-\dP_{ann}(\cup_{j=0}^{J_0}E_j) \geq 1-\dP_{ann}(E_0)- \dP_{ann}(\cup_{j=1}^{J_0} E^{(3)}_j)-\sum_{j=1}^{J_0} \dP_{ann}(E_j\setminus E^{(3)}_j)\geq 1- \varepsilon/2$ for $n$ large enough.
\\
Moreover, if $a$ is small enough (w.r.t. the choice of $\gamma$), by Proposition \ref{prop:speed}, we have that for $n$ large enough, $\dP_{ann}\left(t_{J_0}\leq h^{-1}\log n+a\sqrt{\log n}\right)\leq \varepsilon/2$. This concludes the proof.
\end{proof}

\noindent
The result of Proposition \ref{prop:goodexplo} is annealed, and leads to a quenched result for "most" realizations of $\cG_n$, Corollary \ref{cor:almostmix}.

\begin{proof}[Proof of Corollary \ref{cor:almostmix}]
Take $a$ such that Proposition \ref{prop:goodexplo} holds. We have \\
$1-\nu_n(V_n)=\sum_{x'\in V_n}P_n^{t'_n}(x,x')-\nu_n(x')$. Remark that for all $\mathrm{a},\mathrm{b},\mathrm{m}\in \dR$, such that $\mathrm{a}\geq \mathrm{b}$, $\mathrm{a}- \mathrm{a}\wedge \mathrm{m} \leq  (\mathrm{a}-\mathrm{b})+ (\mathrm{b}-\mathrm{b}\wedge
\mathrm{m})$, so that  
$$1-\nu_n(V_n)\leq \hspace{-3.1mm}\sum_{x'\in V_n\setminus T} P_n^{t'_n}(x,x')-\nu_n(x') +\sum_{x'\in T}\left(P_n^{t'_n}(x,x')-P_n^{t'_n}(x,x',T)\right)  + \left(P_n^{t'_n}(x,x',T)- \nu'_n(x')\right)$$
where $P_n^{k}(x,x',T)$ is the probability that a RW on $\cG_n$ started at $x$ reaches $x'$ in $k$ steps without leaving $T$, and $\nu'_n(x'):= P_n^{t'_n}(x,x',T)\wedge \frac{\exp(K\sqrt{\log n})}{n}$, where $T$ is the exploration tree of Proposition \ref{prop:goodexplo}. Applying Markov's inequality to Proposition \ref{prop:goodexplo} implies that with probability at least $1-\sqrt{\varepsilon}$, $\cG_n$ is such that $\dP_{\cG_n}\left( (X_t)_{1\leq t\leq t'_n}\text{ leaves }T\right)\leq \sqrt{\varepsilon}$. For such $\cG_n$,
$$\sum_{x'\in V_n\setminus T} P_n^{t'_n}(x,x')-\nu_n(x') +\sum_{x'\in T}\left(P_n^{t'_n}(x,x')-P_n^{t'_n}(x,x',T)\right) \leq \sqrt{\varepsilon} ,$$
hence $1 -\nu_n(V_n)\leq \sqrt{\varepsilon}+ \sum_{x'\in T}\left(P_n^{t'_n}(x,x',T)- \nu'_n(x')\right).$
Note that for all $x'\in T$, we have $P_n^{t'_n}(x,x',T)=\cPT^{t'_n}(\circ,\phi(x'), \kT)$ where $\cPT^k(\circ,\phi(x'),\kT)$ is the probability that a RW on $(\cT_{\cG},\circ)$ started at $\circ$ reaches $\phi(x')$ in $k$ steps without leaving $\kT$, the subtree of $(\cT_{\cG},\circ)$ corresponding to $T$. Thus, 
\begin{align*}
1 -\nu_n(V_n) &\leq \sqrt{\varepsilon}+ \sum_{x'\in T}\left(TT^{t'_n}(\circ,\phi(x'),\kT)- \nu'_n(x')\right)
\\
& \leq \sqrt{\varepsilon} + \sum_{y'\in \cT_{\cG}}\cPT^{t'_n}(\circ,y')-\left( \cPT^{t'_n}(\circ,y')\wedge \frac{\exp(K\sqrt{\log n})}{n}\right)
\\
&\leq \sqrt{\varepsilon} +1 -\sum_{y'\in \cT_{\cG}} \left( \cPT^{t'_n}(\circ,y')\wedge \frac{\exp(K\sqrt{\log n})}{n}\right).
\end{align*}
\noindent
Let us prove that $\sum_{y'\in \cT_{\cG}} \left( \cPT^{t'_n}(\circ,y')\wedge \frac{\exp(K\sqrt{\log n})}{n}\right) \geq 1 - \sqrt{\varepsilon}$.
To this end, it is enough to establish that $\dP\left(\cPT^{t'_n}(\cX_0,\cX_{t'_n})\geq \exp(K\sqrt{\log n})/n\right)\leq \sqrt{\varepsilon}$, where $\dP$ is the probability associated to $(\cX_t)$. Let $A_n$ be the $R$-ancestor of $\cX_{t'_n}$: we have clearly that $ W(A_n)\leq w_{min}^{-R}W(\cX_{t'_n})$. Hence by Theorem \ref{thm:weightofray}, one might choose $K$ large enough such that for $n$ large enough, one has $\dP\left( W(A_n)\leq \exp(K\sqrt{\log n}/2)/n\right)\geq 1-\sqrt{\varepsilon}$. 
\\
But we have $W(A_n)\geq  \cPT^{t'_n}(\cX_0,\cX_{t_n})\times \dP\left((\cX_t)_{t\geq t'_n}\text{ does not visit }A_n\right)$, and by Proposition \ref{prop:sticktoposition}, $W(A_n)\geq  \left(1-C_1\exp(-C_2R)\right)\cPT^{t'_n}(\cX_0,\cX_{t_n})$. Hence, with probability at least $1-\sqrt{\varepsilon}$, $\cX_{t'_n}$ is such that 
$$\cPT^{t'_n}(\cX_0,\cX_{t'_n})\leq \frac{\exp(K\sqrt{\log n}/2)/n}{1-C_1\exp(-C_2R)}\leq \frac{\exp(K\sqrt{\log n})}{n}.$$
Thus, we have proved that with probability at least $1-\sqrt{\varepsilon}$, $\cG_n$ is such that $\nu_n(V_n)\geq 1-2\sqrt{\varepsilon}$. Since $\varepsilon>0$ is chosen arbitrarily here, this concludes the proof (remark that without loss of generality, we can impose $\varepsilon <\delta^2$, and that $2\sqrt{\varepsilon} \rightarrow 0$ as $\varepsilon \rightarrow 0$).
\end{proof}

\subsubsection*{b) The last jump for mixing}\label{subsubsec:lastjump}
In this section, we complete the mixing initiated in Corollary \ref{cor:almostmix}, proving a weak version of Theorem \ref{thm:bornsup}, for most starting points of the RW on $\cG_n$: for all $\varepsilon >0$, there exists $K(\varepsilon) >0$ such that if $S\subset V_n$ is the set of vertices $s$ verifying $\vert t^{(n)}_{s}(\varepsilon)-h^{-1}\log n\vert \leq K(\varepsilon)\sqrt{\log n}$, then $\pi_n(S)\rightarrow 1$ as $n\rightarrow +\infty$.
\\
We use the fact that w.h.p., $\cG_n$ is an \textbf{expander}:

\begin{proposition}[\textbf{Expansion}]\label{prop:expansion}
Let $L(\cG_n):=\min_{S\subset V_n, \, \pi_n(S)\leq 1/2}\frac{W(S,S^c)}{\pi_n(S)}$ for $n\geq 1$, where for all $A,B\subset V_n$, $W(A,B):=\sum_{x\in A,y\in B,\e: x\rightarrow y}\mu_n(x)w(\e)$ is the total weight from $A$ to $B$. There exists $L>0$ such that w.h.p. as $n\rightarrow +\infty$, 
\begin{equation}\label{eq:expansion}
L(\cG_n) \geq L.
\end{equation}
\end{proposition}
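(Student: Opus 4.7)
The plan is to adapt the first-moment argument from \cite{liniallifts2} to the weighted, possibly non-reversible setting. A subset $S\subset V_n$ with $\pi_n(S)\leq 1/2$ is parameterized by its type-profile: for each $u\in V_{\cG}$, set $S_u=\{i\in [n]:(u,i)\in S\}$, $s_u=|S_u|$ and $\alpha_u=s_u/n$. Then $\pi_n(S)=\sum_u\pi(u)\alpha_u=:\overline\alpha$. For each oriented edge type $\e: u\to v$, the random permutation $\sigma_e$ is uniform, so the count $X_\e=|\{i\in S_u:\sigma_e(i)\notin S_v\}|$ is hypergeometric with mean $s_u(n-s_v)/n$, and summing over orientations yields
\[
\dE[W(S,S^c)] \;=\; \sum_{\e:\,u\to v}\pi(u)\,w(\e)\,\alpha_u(1-\alpha_v).
\]

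To lower-bound this expectation, I will use the invariance identity $\pi(v)=\sum_{\e:u\to v}\pi(u)w(\e)$ together with the symmetrization $\langle\alpha,Q\alpha\rangle_\pi=\langle\alpha,\tfrac{Q+Q^*}{2}\alpha\rangle_\pi$ (valid for real $\alpha$), where $Q(u,v)=\sum_{\e:u\to v}w(\e)$ and $Q^*$ is the time-reversal. Rewriting the expectation as $\overline\alpha-\langle\alpha,Q\alpha\rangle_\pi$, and letting $\lambda_1>0$ denote the spectral gap of $(Q+Q^*)/2$ (positive because $\cG$ is irreducible by \ref{assump1}), the Poincaré inequality gives $\langle\alpha,(I-Q)\alpha\rangle_\pi\geq \lambda_1\mathrm{Var}_\pi(\alpha)$. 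Combined with the identity $\overline\alpha-\langle\alpha,\alpha\rangle_\pi=\overline\alpha(1-\overline\alpha)-\mathrm{Var}_\pi(\alpha)$ and the trivial bound $\mathrm{Var}_\pi(\alpha)\leq \overline\alpha(1-\overline\alpha)$ (coming from $\alpha_u\in[0,1]$), this yields
\[
\dE[W(S,S^c)] \;\geq\; \lambda_1\,\overline\alpha(1-\overline\alpha) \;\geq\; \tfrac{\lambda_1}{2}\,\pi_n(S).
\]

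Next I will establish concentration and apply a union bound. The permutations $(\sigma_e)_{e\in E_{\cG}}$ being mutually independent, $W(S,S^c)$ is a sum of independent bounded contributions, each a rescaled hypergeometric count of known mean; a Bernstein/Hoeffding-type tail bound gives
\[
\dP\!\left[W(S,S^c)\leq \tfrac{\lambda_1}{4}\pi_n(S)\right] \;\leq\; \exp(-c\,n\,\pi_n(S))
\]
for some constant $c=c(\cG)>0$. The number of profiles is at most $(n+1)^{|V_{\cG}|}$, and for a given profile the number of subsets is $\prod_u\binom{n}{s_u}$, crudely bounded by $\exp(|S|\log(e|V_{\cG}|n/|S|))$. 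Summing over all $S$ with $|S|\geq K\log n$ is then $o(1)$ for $K$ large enough, depending on $c$, $\lambda_1$ and $|V_{\cG}|$.

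The main obstacle is the small-set regime $|S|<K\log n$, where the concentration-plus-counting bound above is too weak. Here I will exploit the fact, already used in the proof of Proposition \ref{prop:goodexplo}, that w.h.p.\ every ball of radius $R=\lfloor C\log\log n\rfloor$ in $\cG_n$ is tree-like. A small set $S$ is then either contained in a tree-like region---in which case $W(S,S^c)$ reduces to the analogous quantity on the universal cover $\cT_{\cG}$, and Lemma \ref{lem:allexitistransient} gives a uniform lower bound $W(S,S^c)\geq c'\pi_n(S)$---or it meets several disjoint tree-like components, each contributing enough boundary weight via the same argument. Taking $L:=\min\{\lambda_1/4,c'\}>0$ and combining the two regimes completes the proof.
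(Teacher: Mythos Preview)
Your first-moment computation is clean, but the union-bound step has a genuine gap. You claim that
\[
\sum_{|S|\geq K\log n}\ \exp\!\big(|S|\log(e|V_\cG|n/|S|)\big)\,\exp\!\big(-c\,n\pi_n(S)\big)\;=\;o(1)
\]
for $K$ large. This is false: since $n\pi_n(S)\asymp |S|$, the summand at $|S|=k$ is $\exp\!\big(k[\log(en|V_\cG|/k)-c']\big)$ for some $c'>0$, and for any fixed $c'$ this blows up whenever $k\leq e^{-c'}|V_\cG|n$. In particular at $k=K\log n$ the exponent is of order $K(\log n)^2$, not $-K\log n$. A bare Hoeffding/Bernstein bound of the form $\exp(-c|S|)$ is simply too weak to beat the entropy $\binom{|V_n|}{|S|}$ except when $|S|$ is a constant fraction of $|V_n|$. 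This leaves the entire range $K\log n\leq |S|\leq \delta n$ uncovered, and your tree-like argument only addresses $|S|<K\log n$. (That argument is also shaky: a set of size $K\log n$ need not sit inside any ball of radius $C\log\log n$, Lemma~\ref{lem:allexitistransient} concerns escape probabilities of the RW rather than edge-boundaries of finite sets, and the dichotomy ``contained in one tree-like region or meets several disjoint ones'' is not a well-defined alternative.)

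The paper avoids all of this by not reproving expansion at all: it quotes Theorem~1 of \cite{liniallifts2}, which already gives $\min_{|S|\leq \theta|V_n|}|E(S,S^c)|/|S|\geq \delta(\theta)>0$ w.h.p., and then converts edge-expansion to conductance via the trivial bounds $W(S,S^c)\geq w_{\min}\pi_{\min}\,|E(S,S^c)|/n$ and $\pi_n(S)\leq \pi_{\max}|S|/n$. If you look at the sketch of the Amit--Linial argument given later in Section~\ref{sec:general}, you will see that it does \emph{not} rely on a generic spectral-gap-plus-Hoeffding bound; it uses assumption~\ref{assump2} to find two cycles on disjoint edge-sets, yielding two \emph{independent} hypergeometric events whose product is what beats the entropy. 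Your approach never invokes~\ref{assump2}, which is a warning sign. Either cite \cite{liniallifts2} as the paper does, or rework the concentration step to exploit the two independent permutations.
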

\noindent
We postpone the proof to the end of the section. In the literature, $L(\cG_n)$ is usually called the \textbf{conductance} of $\cG_n$. The largest $L$ such that (\ref{eq:expansion}) holds is the \textbf{\emph{Cheeger constant}} of $(\cG_n)_{n\geq 0}$. An interesting application of this property is the contraction of $L^2$ norms:

\begin{proposition}\label{prop:cheeger}
There exists $\kappa >0$, only depending $\cG$, such that for all $n,t\geq 1$, and all $x\in V_n$,
$$ Var_{\mu}(k^{(n)}_{t,x}-1)\leq Var_{\mu}(k^{(n)}_{t-1}-1)(1-\kappa L(\cG_n)^2),$$ 

\noindent
where $k^{(n)}_{t,x}(y):=\frac{P_n^t(x,x')}{\mu_n}$ for $x,x'\in V_n$ and $t\geq 0$, and $Var_{\mu}(f):=\sum_{x\in V_n}f(x)^2\mu_n(x)$ for $f:V_n\rightarrow \dR$.
\end{proposition}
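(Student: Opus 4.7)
The plan is to run a standard $L^2$-contraction argument via Cheeger's inequality, with the twist that $P_n$ is not reversible; the laziness built into $P_n$ turns out to be exactly what is needed to bypass this.

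First I would set things up. Since $P_n^t(x,\cdot)$ and $\mu_n$ both sum to $1$, the function $g_t := k^{(n)}_{t,x} - 1$ has $\mu_n$-mean zero, so $\mathrm{Var}_\mu(g_t) = \|g_t\|_{L^2(\mu_n)}^2$. Let $P_n^*(y,z) := \mu_n(z) P_n(z,y)/\mu_n(y)$ be the time-reversal: it is a stochastic matrix with stationary measure $\mu_n$ and holding probability $P_n^*(y,y) = P_n(y,y) = 1/2$. A direct computation gives $g_t = P_n^* g_{t-1}$, so everything reduces to showing
\begin{equation*}
\|P_n^* g\|_{L^2(\mu_n)}^2 \leq (1 - \kappa L(\cG_n)^2)\|g\|_{L^2(\mu_n)}^2
\end{equation*}
for every $\mu_n$-mean-zero $g$. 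To exploit laziness, I write $P_n^* = \tfrac12(I + M)$ with $M$ stochastic and $\mu_n$-invariant. Jensen's inequality gives the $L^2(\mu_n)$-contraction $\|Mg\|^2 \leq \|g\|^2$, and expanding the square yields
\begin{equation*}
\|P_n^* g\|^2 = \tfrac14\|g\|^2 + \tfrac12 \langle g,Mg\rangle_{\mu_n} + \tfrac14 \|Mg\|^2 \leq \|g\|^2 - \mathcal{E}_{P_n^*}(g,g),
\end{equation*}
since the Dirichlet form $\mathcal{E}_{P_n^*}(g,g) := \|g\|^2 - \langle g, P_n^* g\rangle_{\mu_n}$ equals $\tfrac12(\|g\|^2 - \langle g, Mg\rangle_{\mu_n})$.

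Next I would pass to the reversible setting by symmetrization. Let $Q := (P_n + P_n^*)/2$ be the additive reversibilization. It is $\mu_n$-reversible, lazy ($Q(x,x) = P_n(x,x) = 1/2$), and for real $g$ one has $\langle g, P_n^* g\rangle_{\mu_n} = \langle g, Qg\rangle_{\mu_n}$, so $\mathcal{E}_{P_n^*}(g,g) = \mathcal{E}_Q(g,g)$. Conservation of stationary flow, i.e.\ $W_{P_n}(S,S^c) = W_{P_n}(S^c,S)$ for every $S \subset V_n$, yields $W_Q(S,S^c) = W_{P_n}(S,S^c)$, so the conductance of $Q$ equals $L(\cG_n)$ exactly. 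Since $Q$ is reversible and lazy, its spectrum lies in $[0,1]$, and the classical (reversible) Cheeger inequality applied to $Q$ gives $\mathcal{E}_Q(g,g) \geq \tfrac12 L(\cG_n)^2\|g\|^2$ for every $\mu_n$-mean-zero $g$. Combining this with the bound from the previous step produces the desired contraction with $\kappa = 1/2$.

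There is no deep obstacle, but the role of laziness is the subtle point I expect to need most care. The non-reversibility of $P_n$ prevents a direct Cheeger bound on $P_n$ itself because $\|P_n^* g\|^2 \neq \langle g, P_n^* g\rangle_{\mu_n}$, and one cannot simply read off contraction from the Dirichlet form. Laziness fixes this twice over: it is precisely the hypothesis that allows the clean inequality $\|P_n^* g\|^2 \leq \|g\|^2 - \mathcal{E}_{P_n^*}(g,g)$ (a non-lazy $P_n^*$ would leave an uncontrolled term $\tfrac14\|Mg\|^2$), and it forces $Q$ to be lazy, ensuring its spectrum is contained in $[0,1]$ so that the spectral gap directly controls the Dirichlet form without sign corrections. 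Once both of these are in hand, the proof is essentially a two-line combination.
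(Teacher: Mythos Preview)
Your proof is correct and follows essentially the same route as the reference the paper defers to (Section~2.3 of \cite{Montetali}): use laziness to control the $L^2$-contraction of the non-reversible operator in terms of a Dirichlet form, pass to the additive reversibilization $Q=(P_n+P_n^*)/2$, and apply the reversible Cheeger inequality to $Q$. The paper's Appendix~2 phrases the laziness step slightly differently---comparing $\mathcal{E}_{P_n^*P_n}$ to $\mathcal{E}_{P_n}$ via $P_n^*P_n(x,y)\geq \alpha P_n^*(x,y)$---but your direct expansion $P_n^*=\tfrac12(I+M)$ with $\|Mg\|\le\|g\|$ is an equivalent and arguably cleaner way to reach the same inequality.
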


\noindent
This is a classical property. Arguments for the proof can be found in Section 2.3 of \cite{Montetali}. We stress that it is not necessary for $P_n$ to be reversible. 
\\
It remains to link the total variation distance and the $L^2$ distance. By Corollary \ref{cor:almostmix}, for $n$ large enough and for all $p\geq 0$, with probability at least $1-\sqrt{\varepsilon}$, $\cG_n$ is such that:
$$d_y(t'_n+p)=\Vert P_n^{t'_n+p}(v,\cdot )-\pi_n \Vert_{TV}\leq 2\sqrt{\varepsilon} +\Vert \nu_nP_n^{p}(y,\cdot )-\pi_n \Vert_{TV}\leq 2\sqrt{\varepsilon} +D\Vert \nu_nP_n^{p}(,y\cdot )-\pi_n \Vert_{L^2(\pi_n)},$$ 
for some positive constant $D$. The last inequality is due to Cauchy-Schwarz and the fact that $\pi_{n,min}:= \inf_{u\in V_n}\hat{\pi}_n(u) =\pi_{min}/ n$. By definition of $\nu_n$, $\Vert \nu_n(\cdot )-\pi_n \Vert_{L^2(\pi_n)} \leq \exp(2K\sqrt{\log n}).$
Hence by Propositions \ref{prop:expansion} and \ref{prop:cheeger}, there exists a constant $D'$ such that for $q\geq D'\sqrt{\log n}$,
\begin{equation}\label{eq:cheegerdecrease}
\Vert \nu_nP_n^{q}(,y\cdot )-\pi_n \Vert_{L^2(\pi_n)}\leq \varepsilon,
\end{equation}
so that $d_y(t'_n+q) \leq \varepsilon +2\sqrt{\varepsilon}$. This concludes the proof of this weak version of Theorem \ref{thm:bornsup}.

\begin{proof}[Proof of Proposition \ref{prop:expansion}]
The proof is a corollary from that of Theorem 1 in \cite{liniallifts2}, which states that there exists $\delta >0$ such that w.h.p., $\min_{S\subset V_n, \, \vert S\vert\leq \vert V_n \vert /2}\frac{\vert E(S,S^c)\vert}{\vert S\vert} \geq \delta $, where $E(S,S^c)$ is the set of non-oriented edges with one endpoint in $S$ and one in its complementary. Noticing that $\vert E(S,S^c)\vert =\vert E(S^c, S)\vert$, one might extend this property in the following way: for all $\theta\in (0,1)$, there exists $\delta(\theta)>0$ such that w.h.p., $\min_{S\subset V_n, \, \vert S\vert\leq \theta\vert        V_n \vert}\frac{\vert E(S,S^c)\vert}{\vert S\vert} \geq \delta(\theta) $.
\\
Let now $S\subset V_n$, such that $\pi_n(S)\leq 1/2$. Recall that the invariant distribution of the RW associated to $\cG_n$ is $\pi_n(x)=\pi(u)/n$ for all $x\in V_n$ of type $u$. Then there exists $\theta_0\in (0,1)$ such that for any $S\subset V_n$, if $\pi_n(S)\leq 1/2$, then $\vert S\vert \leq \theta_0\vert V_n\vert$. But $\frac{W(S,S^c)}{\pi_n(S)}\geq \frac{w_{min}\pi_{min}}{\pi_{max}}\frac{\vert E(S,S^c)\vert}{\vert S\vert} $, where $\pi_{min}$ and $\pi_{max}$ are the smallest and largest values taken by $\pi$ on $V_{\cG}$.
\\
This implies (\ref{eq:expansion}), with $L\geq \delta(\theta_0)w_{min}\pi_{min}/\pi_{max}$.
\end{proof}

\subsubsection*{c) Extending the starting point}\label{subsubsec:allstartpts}
Fix $\varepsilon \in (0,1)$. Let $r:=\lfloor C'\log \log n\rfloor$ for some constant $C'>0$ such that $r>2R$, where $R$ was defined in (\ref{eq:coupling-R}). Say that $x\in V_n$ is a \textbf{root} if $B(x,r)$ contains no cycle, and denote $\cR_n$ the set of roots.
\\
This section is organized as follows: first, we prove that with high probability on $\cG_n$, the random walk has a high probability to reach a root in $O(\log \log n)$ steps, uniformly in the starting point (Proposition \ref{prop:rootquickRW}). Second, we prove that with high probability on $\cG_n$, for every root $x$ of $\cG_n$, a RW starting at $x$ has a probability at least $1-3\varepsilon$ to leave $B(x,r)$ at a vertex $y$ from which the exploration described in Section \ref{subsubsec:coupling} a) is $k$-successful for some $k$ such that $t_k\geq t'_n$ (Proposition \ref{prop:rootexit}). This relies on the fact that conditionally on $\{x \in \cR_n\}$:
\begin{itemize}
\item the exploration from a vertex $y\in \partial B(x,r)$ has a probability at most $\varepsilon$ not to be successful (Lemma \ref{lem:partialgoodexplo}), so that the mean value of $p_x$ is at least $1-\varepsilon$,
\item and the explorations from two vertices $y,y'\in \partial B(x,r)$ whose common ancestor in $B(x,r)$ is at distance at least $R$ (which is the case for most such couples $(y,y')$ ) are almost independent, so that $p_x$ should be concentrated around its mean.
\end{itemize}
Third, we give a proof of Theorem \ref{thm:bornsup} (under \ref{assump3} and \ref{assump4}), using the results of Sections \ref{subsubsec:coupling} a) and \ref{subsubsec:lastjump} b).

\begin{lemma}\label{lem:bulb}
W.h.p. as $n\rightarrow +\infty$, for every $x\in V_n$, $B(x,5r)$ contains at most one cycle. 
\end{lemma}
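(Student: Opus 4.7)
The plan is to reveal $B(x,5r)$ via the sequential procedure of Lemma \ref{lem:sequential}, control the number of cycles created during this exploration by a simple first/second moment argument, and then take a union bound over $x\in V_n$. Since $r=\lfloor C'\log\log n\rfloor$, the ball $B(x,5r)$ contains at most $N:=\Delta^{5r+1}$ vertices, hence the exploration performs at most $N$ successive half-edge matchings before terminating.

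At each matching step, by Lemma \ref{lem:sequential} the partner of the currently exposed half-edge is chosen uniformly among the unmatched half-edges of appropriate type. The total number of unmatched half-edges at any time during the exploration is at least $\mu|V|n-2N$, where $\mu$ is the mean degree of $\cG$; among these, at most $N$ belong to vertices already placed in the exploration tree of $B(x,5r)$. Call a matching \emph{bad} if it joins two half-edges both incident to already-explored vertices: each bad matching produces at most one new cycle, so the number of cycles in $B(x,5r)$ is bounded by the number of bad matchings $Y_x$. A union bound over steps gives
\begin{equation*}
\EE[Y_x]\le N\cdot\frac{N}{\mu|V|n-2N}=O\!\left(\Delta^{10r}/n\right)=O\!\left((\log n)^{10C'\log\Delta}/n\right),
\end{equation*}
and summing over ordered pairs of matching steps yields
\begin{equation*}
\PP(Y_x\ge 2)\le\binom{N}{2}\cdot\left(\frac{N}{\mu|V|n-2N}\right)^{\!2}=O\!\left(\Delta^{20r}/n^{2}\right)=O\!\left((\log n)^{20C'\log\Delta}/n^{2}\right).
\end{equation*}

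Taking a union bound over the $n$ choices of $x\in V_n$ gives
\begin{equation*}
\PP(\exists x\in V_n,\,Y_x\ge 2)=O\!\left((\log n)^{20C'\log\Delta}/n\right)\xrightarrow[n\to\infty]{}0,
\end{equation*}
since $(\log n)^{A}=o(n)$ for every constant $A>0$. This is exactly the claim, because "$B(x,5r)$ contains at most one cycle" is the same as $Y_x\le 1$.

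The only real subtlety is that the sequential exploration implicit in Lemma \ref{lem:sequential} reveals the ball around a single distinguished starting vertex, yet we need a uniform statement over all $x\in V_n$. This is handled by the union bound: once the probability of having two or more cycles in a given ball is shown to be $O(\mathrm{polylog}(n)/n^{2})$, summing over $n$ starting points still gives $o(1)$. I expect the main technical point to be keeping careful track of constants (in particular the denominator $\mu|V|n-2N$ of the pairing probability, and the fact that restricting to half-edges of a given type only improves the bound by a multiplicative constant depending on $\cG$), but no new ideas beyond a basic moment estimate are required.
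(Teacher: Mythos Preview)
Your proof is correct and follows essentially the same approach as the paper's: explore $B(x,5r)$ via at most $O(\Delta^{5r})$ sequential matchings, bound the probability that any given matching closes a cycle by $O(\Delta^{5r}/n)$, conclude that $\PP(Y_x\ge 2)=O(\Delta^{20r}/n^2)=o(1/n)$, and finish by a union bound over the $|V_n|=O(n)$ starting vertices. Your write-up is in fact more careful than the paper's (which is a three-line sketch), in particular in making explicit the denominator $\mu|V|n-2N$ and the identification of ``at most one cycle'' with $Y_x\le 1$.
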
 

\begin{proof}
We proceed via a counting argument similar to the beginning of the proof of Proposition \ref{prop:goodexplo}, while estimating the probability that the exploration stops at move $0$: take $x\in V_n$, proceed to the $O(\Delta^{5r})$ successive matchings of half-edges to generate $B(x,5r)$. Each matching has a probability $O(\Delta^{5r}/n)$ to close a cycle. Hence, the probability that at least two matchings close a cycle is $O\left((\Delta^{5r})^2\times (\Delta^{5r}/n)^2\right)=o(1/n)$. By a union bound, 
$\dP(\exists x\in V_n, \, x\not\in \cB_n)=o(1)$. 
\end{proof}

\begin{proposition}\label{prop:rootquickRW}
Let $c:= \frac{3}{2s}$, where $s$ is the constant of Proposition \ref{prop:speed}. For all $\delta >0$, there exists $n_0\geq 1$ such that for all $n\geq n_0$ and for all realizations of $\cG_n$ such that Lemma \ref{lem:bulb} holds,
\begin{equation}\label{eqprop:rootquickRW}
\max_{x\in V_n}P_n^{\lfloor cr\rfloor}(x, V_n\setminus \cR_n)\leq \delta.
%\max_{x\in V_n}P_n^{\lfloor cr\rfloor}(x, V_n\setminus \cR_n)\overset{\dP}{\to} 0.
\end{equation}
As a consequence, $\max_{x\in V_n}P_n^{\lfloor cr\rfloor}(x, V_n\setminus \cR_n)\overset{\dP}{\to} 0$.
\end{proposition}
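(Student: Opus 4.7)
The plan is to transfer the ballistic behaviour of the walk on the universal cover $\cT_\cG$ to the walk on $\cG_n$, via the coupling of Section \ref{subsubsec:coupling} a). First I fix $x \in V_n$ and build a BFS exploration tree $T$ rooted at $x$, together with a RW $(\cX_t)$ on $\cT_\cG$ rooted at $\circ$ of the same type as $x$, coupled to $(X_t)$ through an embedding $\phi : T \to \cT_\cG$. The hypothesis of Lemma \ref{lem:bulb} guarantees that $B(x, 5r)$ has at most one cycle $C$ (with excess edge $e$); the identification $X_t = \phi^{-1}(\cX_t)$ is then valid as long as the walk has not traversed $e$.

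Next I invoke the speed CLT (Proposition \ref{prop:speed}), which, combined with $s\lfloor cr\rfloor = \tfrac{3}{2}r + O(\sqrt{r})$, yields for any $\eta \in (0, 1/4)$ and $n$ large enough
\[
\dP\bigl(he(\cX_{\lfloor cr \rfloor}) \in [(3/2-\eta)r, (3/2+\eta)r]\bigr) \;\geq\; 1 - \delta/3.
\]
On this event and on $\{\phi\text{ is valid up to time }\lfloor cr\rfloor\}$, $X_{\lfloor cr\rfloor}$ lies at $\cG_n$-distance in $[(3/2-\eta)r, (3/2+\eta)r]$ from $x$, so $B(X_{\lfloor cr\rfloor}, r) \subset B(x, 5r)$ contains at most one cycle. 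If $B(x, 5r)$ is a tree, then $X_{\lfloor cr\rfloor}\in \cR_n$ immediately.

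Otherwise let $C^+$ be a vertex of $C$ maximising $d_{\cG_n}(x, \cdot)$, with value $d^+$. The reverse triangle inequality $d_{\cG_n}(X_{\lfloor cr\rfloor}, C^+) \geq |d_{\cG_n}(X_{\lfloor cr\rfloor}, x) - d^+|$ forces $d_{\cG_n}(X_{\lfloor cr\rfloor}, C^+) > r$ whenever $d^+ < (1/2-\eta)r$ or $d^+ > (5/2+\eta)r$, so $C \not\subset B(X_{\lfloor cr\rfloor}, r)$ and $X_{\lfloor cr\rfloor}\in \cR_n$ automatically. In the intermediate regime $d^+\in[(1/2-\eta)r,(5/2+\eta)r]$, the inclusion $C\subset B(X_{\lfloor cr\rfloor}, r)$ (allowing for shortcuts through $e$) forces $\cX_{\lfloor cr\rfloor}$ to lie in one of a constant number of radius-$r$ tree balls, each of which requires the walk to visit some tree ancestor at height $\geq \kappa r$ for an explicit $\kappa > 0$. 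Proposition \ref{prop:sticktoposition} then bounds this probability by $C_1 e^{-C_2 \kappa r}$, which is $o(1)$ and hence $\leq \delta/3$ for $n$ large. An analogous application of Proposition \ref{prop:sticktoposition} controls the probability that the walk has traversed $e$ by time $\lfloor cr\rfloor$.

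The main obstacle is the treatment of shortcut distances through the excess edge: since the $\cG_n$-distance from $X_{\lfloor cr\rfloor}$ to vertices of $C$ may be strictly smaller than their tree distance, one has to rule out both the direct route and the ``around the cycle'' route to every $c \in C$, which is why the argument must work out the explicit tree-height thresholds and apply Proposition \ref{prop:sticktoposition} to each of them. Summing the error probabilities from the three cases yields $\dP_{\cG_n}(X_{\lfloor cr\rfloor}\notin\cR_n) \leq \delta$ uniformly in $x$, which proves (\ref{eqprop:rootquickRW}). The final convergence-in-probability statement then follows because Lemma \ref{lem:bulb} itself holds w.h.p.
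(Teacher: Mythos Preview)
Your approach differs from the paper's and has a real gap in the case where the cycle $C$ is close to $x$ (in particular when $d^+ < (\tfrac12-\eta)r$). In a BFS tree from $x$, the endpoints $u,v$ of the excess edge $e$ sit at height $d^+$ or $d^+-1$ (they are the two deepest vertices of $C$ in $T$). So when $d^+$ is small---say $x\in C$ and $|C|$ is bounded---the vertices $\phi(u),\phi(v)$ are at bounded height in $\cT_\cG$, and Proposition~\ref{prop:sticktoposition} gives no useful bound on the probability that $\cX$ visits them. The walk may then traverse $e$ (indeed wind around $C$ several times) with probability bounded away from $0$, so the identification $X_t=\phi^{-1}(\cX_t)$ fails, and with it your lower bound $d_{\cG_n}(X_{\lfloor cr\rfloor},x)\ge(\tfrac32-\eta)r$. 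Without that lower bound, the reverse triangle inequality $d_{\cG_n}(X_{\lfloor cr\rfloor},C^+)\ge d_{\cG_n}(X_{\lfloor cr\rfloor},x)-d^+$ no longer forces $d_{\cG_n}(X_{\lfloor cr\rfloor},C^+)>r$, and the argument for that branch collapses. (Concretely: if the walk winds around a short $C$ roughly $r/(2|C|)$ times on the cover before entering a planted tree, then $he(\cX_{\lfloor cr\rfloor})\approx\tfrac32 r$ while $d_{\cG_n}(X_{\lfloor cr\rfloor},x)\approx r$, and nothing you wrote excludes $C\subset B(X_{\lfloor cr\rfloor},r)$.)

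The paper circumvents this entirely by a two-stage argument: it first shows, by an elementary geometric bound (each block of $|V|+L$ steps escapes $C(L)$ with probability $\ge w_{\min}^{|V|+L}$), that the walk leaves a fixed $L$-neighbourhood $C(L)$ of the cycle within $\lfloor\log r\rfloor$ steps with probability $1-o(1)$; then, choosing $L$ so that a walk started outside $C(L)$ returns to $C$ with probability $\le\delta$, it applies the speed CLT to the remaining $\lfloor cr\rfloor - O(\log r)$ steps entirely inside a tree planted on $C$, yielding $d(X_{\lfloor cr\rfloor},C)\in[\tfrac43 r,\tfrac53 r]$. This makes the ``short cycle near $x$'' case trivial rather than the hardest one. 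Your intermediate-regime computation (visiting a specific ancestor of $C^+$ at height $\gtrsim(\tfrac12-\eta)r$) is essentially correct and is in fact the mechanism that would rescue the small-$d^+$ case too, but you would have to run it on the full cover (union-bounding over the $O(r)$ lifts of $C$ within radius $\tfrac32 r$), not on the single BFS-tree copy; as written the proof does not do this.
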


\begin{proof}
Fix $\delta>0$ and a realization of $\cG_n$ such that Lemma \ref{lem:bulb} holds, for some $n\geq 0$. 
\\
Let $x\in V_n$. If $B(x,5r)$ contains no cycle, then it is isomorphic to $(\cT_{\cG},\circ)$ for some $\circ\in V_{\cT_{\cG}}$ on its first $5r$ levels. For $n$ large enough, for all $\circ\in V_{\cT_{\cG}}$, $\dP(4r/3\leq he(\cX_{\lfloor cr\rfloor})\leq 5r/3)\geq 1-\delta$ if $(\cX_t)_{t\geq0}$ is a RW on $(\cT_{\cG},\circ)$ started at the root. Hence if $(X_t)_{t\geq 0}$ is a RW on $\cG_n$ started at $x$, $\dP(X_{\lfloor cr\rfloor}\in \cR_n)\geq 1-\delta$.
\\
Else, there is only one cycle in $B(x,5r)$. Hence $B(x,5r)$ can be seen as a cycle $C$, with trees rooted on its vertices of degree at least $3$. 
Let $L\in \dN$ be such that for all $\circ\in V_{\cT_{\cG}}$, for all $y\in (\cT_{\cG},\circ)$ such that $he(y)\geq L$, $\dP(\exists t\geq 0, \, \cX_t=x)\leq \delta$ where $(\cX_t)_{t\geq 0}$ is a RW on $(\cT_{\cG},\circ)$ started at the root (for any $\delta$, such $L$ exists, by Proposition \ref{prop:sticktoposition}). 
Let $C(L)$ be the set of vertices at distance at most $L$ of $C$. We claim that for $n$ large enough, $\dP(X_0, X_1, \ldots, X_{\lfloor\log(r)\rfloor }\in C(L))\leq \delta$. Indeed, note that for all $t\leq \log(r),$
\begin{itemize}
\item either $X_t$ is not on $C$, and is at distance less than $L$ from $B(x,5r)\setminus C(x,L)$,

\item or it is on $C$, and it is at distance at most $\vert V\vert$ from a vertex of $C$ that is the root of a tree planted on $C$ (a cycle in $\cG$ cannot contain a path of $\vert V +1\vert$ consecutive vertices of degree less than 3). 
\end{itemize}  
Remark that all trees rooted on $C$ that $X_t$ might visit for $t\leq \log(r)$ are isomorphic to some $(\cT_{\cG},\circ)$ at least on their first $5r-\log(r)$ levels (since we are still in $B(x,5r)$. Therefore, we have $d(X_t, B(x,5r)\setminus C(x,L))\leq \vert V\vert +L$. Hence $\dP(X_{t+\vert V\vert +L}\not \in C(x,L)) \geq w_{min} ^{\vert V\vert + L}$ for all $t\geq 0$. Decomposing $\{1, \lfloor\log(r)\rfloor\}$ into intervals of length $\vert V\vert +L$, we have 
$$\dP(X_1, X_2, \ldots, X_{\lfloor\log(r)\rfloor }\in C(L)) \leq \left(1-w_{min} ^{\vert V\vert + L}\right)^{\lfloor\log(r)/(\vert V\vert +L)\rfloor}\leq \delta$$
for $n$ large enough. \\
Let then $t_0:=\inf\{t\geq 0, X_{t_0}\not\in C(L)\}$. 
By definition of $L$ and Proposition \ref{prop:speed}, for $n$ large enough, with probability at least $1-2\delta$, $(X_t)$ does not visit $C$ for $t_0\leq t\leq t_0+cr$ and $4r/3 \leq d(X_{t_0 +cr},C)\leq 5r/3$. On this event and on $\{t_0\leq \log(r)\}$, $B(X_{\lfloor cr\rfloor}, r)$ contains no cycle so that $X_{\lfloor cr\rfloor}\in \cR_n$. Therefore, on this realization of $\cG_n$, $\dP(X_{\lfloor cr\rfloor}\in \cR_n)\geq 1-3\delta$ uniformly in the starting point $x$.
\\
Hence, we have shown that for all $\delta >0$, for $n$ large enough and $\cG_n$ such that $\cB_n=V_n$,
$$\max_{x\in V_n}P_n^{\lfloor cr\rfloor}(x, V_n\setminus \cR_n)\leq 3\delta \leq 1-\delta,$$
and the conclusion follows by Lemma \ref{lem:bulb}.
\end{proof}
\noindent
For $x\in \cR_n$, let $\dP_x$ be the probability distribution of $\cG_n$ conditionally on the fact that $x$ is a root, let $\lambda_x$ be the hitting measure on $\partial B(x,r)$ of a RW started at $x$, and for all $y\in \partial B(x,r)$, denote $\alpha(y,x)$ the vertex at distance $R$ of $y$ on its shortest path to $x$.
\\
In addition, if at most $n\exp(-\beta\sqrt{\log n}/2)$ edges of $\cG_n$ have been revealed (where $\beta$ is the constant of Proposition \ref{prop:goodexplo}), and if all revealed paths starting from $y$ and leading to revealed cycles have length at least $r$, or go through $\alpha(y,x)$, say that $\cG_n$ is a \textbf{good context for $y$}.
\\
Define the \textbf{cut exploration from $y$} as the exploration performed in \ref{subsubsec:coupling}, except that some matchings may have already been revealed, and give a mark to $\alpha(y,x)$ (hence don't explore the offspring of $\Phi(\alpha)$ and stop the exploration if the RW hits $\alpha(y,x)$). If the cut exploration from $y$ is $k$-successful for some $k$ such that $t(k)\geq t'_n$, we say that this exploration is \textbf{good}.

\begin{proposition}\label{prop:rootexit}
With high probability on $\cG_n$, for all $x\in \cR_n$,
$$\lambda_x(\{y\vert \text{ the cut exploration from $y$ is good}\})\geq 1-3\varepsilon.$$ 
\end{proposition}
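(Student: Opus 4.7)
The plan is to fix $x\in V_n$, condition on the event $\{x\in \cR_n\}$, and prove that
\[
\dP\bigl(p_x < 1 - 3\varepsilon \;\big|\; x \in \cR_n\bigr) = o(1/n),\qquad p_x:=\lambda_x(\{y:\text{cut exploration from }y\text{ is good}\}),
\]
from which a union bound over the $n$ possible roots gives the proposition. Using Lemma \ref{lem:sequential}, I would first reveal $\cG_n$ inside $B(x,r)$; on $\{x\in\cR_n\}$ this neighborhood is a tree, and the hitting distribution $\lambda_x$ on $\partial B(x,r)$, being a functional of the transition weights inside $B(x,r)$, is then fully determined.

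Next, I would order the vertices $y_1, \ldots, y_N$ of $\partial B(x,r)$ (with $N \leq \Delta^r = (\log n)^{O(1)}$) and reveal the cut explorations from them one after another. The mark at $\alpha(y_i,x)$ keeps each cut exploration in the subtree beyond $\alpha(y_i,x)$, isolated from the region around $x$ that was already touched; together with the fact that at any time only $O(N\cdot n\exp(-\beta\sqrt{\log n})) = o(n)$ half-edges are exposed, this ensures that $\cG_n$ remains a good context for every unexplored $y_i$. A direct rerun of the proof of Proposition \ref{prop:goodexplo} starting from $y_i$ rather than from a random vertex (the content of Lemma \ref{lem:partialgoodexplo}) then yields
\[
\dP(E_{y_i}^c \mid \cF_{i-1}) \leq \varepsilon
\]
uniformly in $i$, where $\cF_{i-1}$ is the sigma-field of everything revealed through step $i-1$ and $E_{y_i}$ is the event that the $i$-th cut exploration is good. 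Averaging, $\EE[1 - p_x \mid B(x,r)] \leq \varepsilon$.

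To upgrade this mean estimate into a quantitative tail bound I would apply Azuma's inequality to the Doob martingale $M_i := \EE[1 - p_x \mid \cF_i]$. Its increments are controlled by $\lambda_x(y_i)$, so
\[
\dP\bigl(1 - p_x \geq 3\varepsilon \,\big|\, B(x,r)\bigr)\le 2\exp\!\left(-\frac{c\varepsilon^2}{\max_y \lambda_x(y)}\right),
\]
which is $o(1/n)$ as soon as $\max_y\lambda_x(y)=o(1/\log n)$. I would derive this $\ell^\infty$ bound on $\lambda_x$ from Proposition \ref{prop:sticktoposition}: on the tree $B(x,r)$ (isomorphic to the first $r$ levels of $(\cT_{\cG},\circ)$), the harmonic measure on the boundary, up to exponentially small error in $R$, is supported on the $(\log n)^{\omega(1)}$ descendants of $\xi_R$ with roughly balanced weights.

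The main obstacle will be the pairs $y\neq y'\in\partial B(x,r)$ with $\alpha(y,x)=\alpha(y',x)$: their cut explorations share the same marked subtree, so the corresponding increments of $M_i$ are explicitly correlated and my per-step bound may fail for these steps. I would handle this by grouping such close vertices into clusters in the sequential order, treating each cluster as a single revelation step whose martingale increment is bounded by the total $\lambda_x$-mass of the cluster. Since each vertex has at most $\Delta^R$ close companions and $R=\lfloor C\log\log n\rfloor$ was taken with $C$ large, the clustered increments remain small enough for Azuma to still deliver the required $o(1/n)$ bound.
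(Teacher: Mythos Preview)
Your overall strategy is the paper's, and your clustering fix at the end---grouping boundary vertices by their common $R$-ancestor $\alpha$---is exactly how the paper sets up the argument from the start. There is, however, a genuine gap that is not the one you flag as the ``main obstacle''.

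The assertion ``this ensures that $\cG_n$ remains a good context for every unexplored $y_i$'' is unjustified. When you perform the cut exploration from a cluster $A_j$, you are pairing fresh half-edges, and nothing prevents one of those pairings from landing on some $y\in A_i$ with $i>j$ (the vertices of $\partial B(x,r)$ are the only ones in $B(x,r)$ that still carry unmatched half-edges). Such a \emph{mismatch} attaches to $y$ a revealed edge that does not go through $\alpha(y,x)$ and may put a revealed cycle within distance $<r$ of $y$, so the good-context hypothesis of Lemma~\ref{lem:partialgoodexplo} can fail for $y$. The same phenomenon also breaks your Doob-martingale increment bound: revealing cluster $j$ can shift $\dP(E_y^c\mid\cF_\bullet)$ by order $1$ for every mismatched $y$, so $|M_j-M_{j-1}|$ is not controlled by $\lambda_x(A_j)$ alone, even after clustering.

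The paper repairs both issues at once. First, a direct binomial count shows that with probability $1-o(1/n)$ the total number of mismatches is at most $\lfloor 3\sqrt{\log n}/\beta\rfloor$, whence by Proposition~\ref{prop:sticktoposition} the $\lambda_x$-mass of the affected (``non-intact'') clusters is $\le\varepsilon$. Second, Azuma is applied not to the Doob martingale of $1-p_x$ but to $\mathcal W_i=\sum_{j\le i}\bigl(\mathcal Y_j-\EE[\mathcal Y_j\mid\cF_{j-1}]\bigr)$ with $\mathcal Y_j:=\lambda_x\bigl(\{y\in A_j:\text{$A_j$ intact and cut exploration from $y$ bad}\}\bigr)$. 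Because $0\le\mathcal Y_j\le\lambda_x(A_j)$ deterministically, the increments are cleanly bounded, and $\sum_j\lambda_x(A_j)^2\le\max_j\lambda_x(A_j)\le C_1\exp(-C_2(r-R))$ delivers the required $o(1/n)$ tail.
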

\noindent
To prove this statement, notice first that a cut-exploration from a good context has a large probability to be good.

\begin{lemma}\label{lem:partialgoodexplo}
For $n\in \dN$, let $x\in \cR_n$ and $y\in \partial B(x,r)$. Let $\alpha$ be the unique vertex on the shortest path from $x$ to $y$ at distance $R$ of $y$. Suppose that  $\cG_n$ has been partially revealed and is a good context for $y$. Then $\dP\left(\text{the $\alpha$-cut exploration from $y$ is good}\right)\geq 1-\varepsilon$.
\end{lemma}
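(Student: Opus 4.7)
The plan is to rerun the proof of Proposition~\ref{prop:goodexplo} with the coupling rooted at $y$ instead of $x$, keeping the same bookkeeping (stopping times $t_j$, $R$-offsprings $O_j$, ancestors $\alpha_j$, events $E_j$), and to handle two additional sources of failure: the mark placed on $\alpha$ from the outset, and the half-edges paired prior to the cut exploration.

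Concretely, I would pick $\circ \in V_{\cT_{\cG}}$ whose label equals the type of $y$, couple a RW $(\cX_t)_{t\geq 0}$ on $(\cT_{\cG},\circ)$ started at the root to a walk $(X_t)_{t\geq 0}$ on $\cG_n$ started at $y$ exactly as in Section~\ref{subsubsec:coupling}, and set $J_0 := \lfloor h^{-1}s \log n + \gamma\sqrt{\log n}\rfloor$ with $\gamma$ chosen via Corollary~\ref{cor:weightofray} and \eqref{eq:linkentropies} so that $\dP(\xi_{J_0}\notin \cN(\beta)) \leq \varepsilon/4$; I would also pick the constant $a$ in $t'_n$ small enough (relative to $\gamma$) that Proposition~\ref{prop:speed} yields $\dP(t_{J_0} < t'_n) \leq \varepsilon/4$. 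Three of the failure modes from Proposition~\ref{prop:goodexplo} then carry over verbatim: the walk revisiting some $\alpha_j$ (the event that it hits the new mark $\alpha$ itself being a subcase, since $d(y,\alpha)=R$, handled by Proposition~\ref{prop:sticktoposition}), the image $\phi(\alpha_j)$ missing $\xi$, and $\xi_j$ escaping $\cN(\beta)$. Each contributes $o((\log n)^{-1})$ per step (or $\varepsilon/4$ globally for the last one), so their total contribution for $j\leq J_0$ is at most $\varepsilon/2$.

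The only genuinely new step is controlling cycle creation in the presence of previously revealed edges. Each pairing of a half-edge of $O_j$ now fails either by closing a cycle inside the cut-exploration tree (probability $O(\Delta^{R+1}/n)$, as in Proposition~\ref{prop:goodexplo}) or by matching with a half-edge attached to the previously revealed region, and I would treat both as failures. Since at most $n\exp(-\beta\sqrt{\log n}/2)$ edges have been revealed while $\Theta(n)$ half-edges remain free, the second event has per-pairing probability $O(\exp(-\beta\sqrt{\log n}/2))$; moreover the good context hypothesis forbids pre-existing cycles in $B(y,r)$ avoiding $\alpha$, so such a collision necessarily reaches a vertex at graph distance $\geq r$ from $y$ and is not forced by the local structure around $y$. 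Summing over the $J_0\cdot O(\Delta^{R+1}) = O((\log n)^{1+C\log \Delta})$ total pairings gives $o(1)$, provided $R = O(\log\log n)$ with the implicit constant small enough relative to $\beta$.

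The main obstacle is precisely this new cycle-or-collision bound: one needs $\beta$ to be a fixed positive constant (independent of $n$) and $R$ to grow only as $\log\log n$ so that $\exp(-\beta\sqrt{\log n}/2)\cdot \Delta^R \log n = o(1)$. These are already the parameter regimes fixed by Proposition~\ref{prop:goodexplo} and \eqref{eq:coupling-R}, so the verification is immediate. Assembling all four bounds yields the announced failure probability $\leq \varepsilon$.
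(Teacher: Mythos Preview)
Your proposal is correct and follows the same route as the paper, which simply asserts that the arguments of Proposition~\ref{prop:goodexplo} carry over (the extra mark at $\alpha$ costing $o(1)$ by Proposition~\ref{prop:sticktoposition}, and the previously revealed edges being absorbed into the cycle-creation estimate thanks to the $n\exp(-\beta\sqrt{\log n}/2)$ bound from the good-context hypothesis). One cosmetic remark: your proviso that the implicit constant in $R=O(\log\log n)$ be ``small enough relative to $\beta$'' is unnecessary, since any fixed power of $\log n$ is dominated by $\exp(-\beta\sqrt{\log n}/2)$.
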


\begin{proof}
One checks that all the arguments in the proof of Proposition \ref{prop:goodexplo} remain valid. In particular, the probability of creating a cycle while matching the $R+1$ offspring of the $R$-ancestor of the RW in the first $t'_n$ steps is $o(1)$ and on the event that the exploration is not stopped, the RW has a probability $o(1)$ to visit $\alpha$. 
\end{proof}

\begin{proof}[Proof of Proposition \ref{prop:rootexit}]
Let $x$ be a root. Let $\alpha_1, \ldots, \alpha_q$ be the vertices of $\partial B(x,r-R)$, with $q= \vert \partial B(x,r-R)\vert$. For all $i\in \{1, q\}$, let $A_i$ be the set of vertices $y\in \partial B(x,r)$ such that $\alpha_i$ is  on the shortest path from $x$ to $y$. We say that $A_i$ is \textbf{intact} whenever for all $y\in A_i$, for all $j<i$ and $y'\in A_j$, no vertex in the cut exploration from $y'$ is matched to $y$. On the contrary, if such a matching exists, say that the relevant edge is a \textbf{mismatch}.
Denote $V_{\alpha}$ the set of vertices $\alpha_i$ such that $A_i$ is not intact. Denote $V_{mis}$ (resp. $B$) the set of $y\in \partial B(x,r)$ which are not intact (resp. which are intact and whose cut exploration is not good). Clearly, it is enough to prove that 
$$\dP_x(\lambda_x(B\cup V_{mis})\geq 3\varepsilon)=o(1/n),$$
where the $o(1/n)$ is uniform in $x\in \cR_n$.
\\
Let $I_n$ be the cardinality of $V_{\alpha}$, and $J_n$ the number of mismatches. Clearly, $I_n\leq J_n$ a.s. and there exists $K'>0$, independent of $x$, such that for large enough $n$, $\vert\partial B(x,r)\vert \leq \log n^{K'}$. Hence while performing the cut explorations of all $y\in A_i$ for all $i\leq q$, less than $n\exp(-\beta\sqrt{\log n}/2)$ edges are created for $n$ large enough. Edges arise from independent matchings, so that for $n$ large enough, $J_n$ is stochastically dominated by a sum of $n\exp(-\beta\sqrt{\log n}/2)$ independent Bernoulli random variables of parameter $2\log n^{K'}/n$. This entails for all integers $U\geq 1$:
$$\dP_x(I_n \geq U) \leq \dP_x(J_n \geq U) \leq { n\exp(-\beta\sqrt{\log n}/2) \choose U}\left( \frac{2\log n^{K'}}{n}\right)^{U}\leq  \left( \frac{2\log n^{K'}}{\exp(\beta\sqrt{\log n}/2)}\right)^{U}$$
since ${M \choose N}\leq M^N$ for $M,N \in \dN$. Letting $U=\lfloor 3\sqrt{\log n}/\beta \rfloor $, we obtain $\dP_x(I_n \geq U) =o(1/n)$,
and this is uniform in $x\in \cR_n$. But if $I_n \leq\lfloor 3\sqrt{\log n}/\beta \rfloor$, by Proposition \ref{prop:sticktoposition}, 
\\
$\lambda_x(V_{mis})\leq \dP((X_t)\text{ hits a non-intact $\alpha_i$ before leaving $B(x,r)$ })=O\left( \sqrt{\log n} C_2^{r-R}\right)$. For $C'$ large enough in the definition of $r$, $\sqrt{\log n} C_2^{r-R}=o(1)$. 
\\
Therefore, 
$$\dP_x(\lambda_x(V_{mis})\geq \varepsilon)=o(1/n)$$ 
uniformly in $x\in \cR_n$. Hence, it remains to prove that $\dP_x(\lambda_x(B) >2\varepsilon) =o(1/n)$.

\noindent
For $i\leq q$, let $\mathcal{F}_i$ be the $\sigma$-field generated by the cut explorations of the vertices in $\cup_{j=1}^i A_j$, and define $\mathcal{Y}_i:=\lambda_x(B\cap A_i)$, $\mathcal{Z}_i:=\mathcal{Y}_i-\dE_x(\mathcal{Y}_i\vert\, \mathcal{F}_{i-1})$ and $\mathcal{W} _i:=\sum_{j=1}^i Z_j$. According to Lemma \ref{lem:partialgoodexplo}, for all $i\leq q$, 
$$\dE_x(\mathcal{Y}_i\vert\, \mathcal{F}_{i-1})=\mathbf{1}_{\{A_i \text{ is intact}\}}\sum_{y\in A_i}\lambda_x(y) \dP_x(\text{the exploration from $y$ is not good }\vert \mathcal{F}_{i-1})\leq \varepsilon \lambda_x(A_i).$$
In particular, we obtain that $\lambda_x(B)\leq \varepsilon +W_q$. And for all $i\geq 0$,
$$(\mathcal{W} _{i+1}-\mathcal{W} _i)^2 =\mathcal{Z}_{i+1}^2 \leq 2(\mathcal{Y}_i^2 +\dE_x(\mathcal{Y}_i\vert\, \mathcal{F}_{i-1})^2)\leq 2\lambda_x(A_{i+1})^2 +2\varepsilon^2 \lambda_x(A_{i+1})^2\leq 4\lambda_x(A_{i+1})^2,$$
so that 
$$\sum_{i=0}^{q-1} (\mathcal{W} _{i+1}-\mathcal{W} _i)^2\leq 4\sum_{j=1}^q \lambda_x(A_j)^2\leq 4 \max_{1\leq i \leq q}\lambda_x(A_i).$$
Again, by Proposition \ref{prop:sticktoposition}, $\max_{1\leq i\leq q}\lambda_x(A_i)\leq C_2^{r-R}$. We apply Azuma-Hoeffding's inequality to the martingale $(\mathcal{W} _i)_{1\leq i\leq q}$ to get
$$\dP_x(\mathcal{W}_q\geq \varepsilon)\leq \exp\left( -\frac{\varepsilon^2}{2C_2^{r-R}}\right)=o(1/n),$$
so that $\dP_x(\lambda_x(B) >2\varepsilon) =o(1/n)$, the $o(1/n)$ being uniform in all $x\in \cR_n$. This concludes the proof. 
\end{proof}
\noindent
From this and the conclusions of \ref{subsubsec:lastjump} b), we deduce the following:
\begin{corollary}\label{cor:rootexit}
With high probability on $\cG_n$, for all $x\in \cR_n$,
$$\lambda_x\left(\{y\, \vert\, d_y(\lfloor t''_n+D'\sqrt{\log n} \rfloor )\leq \varepsilon+2\sqrt{\varepsilon}\}\right)\geq 1-3\varepsilon,$$
where $D'$ is defined as in (\ref{eq:cheegerdecrease}).
\end{corollary}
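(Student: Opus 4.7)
The plan is to combine Proposition~\ref{prop:rootexit} (most exits from $B(x,r)$ admit a good cut exploration) with the $L^2$ contraction argument of Section~\ref{subsubsec:lastjump} b) (good cut explorations are followed by $\varepsilon$-mixing after an extra $D'\sqrt{\log n}$ steps). The high-probability event on $\cG_n$ that I work on is the intersection of the conclusions of Lemma~\ref{lem:bulb}, Proposition~\ref{prop:expansion} and Proposition~\ref{prop:rootexit}; each has probability $1-o(1)$, so their intersection does too.

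Fix such a realization of $\cG_n$, a root $x\in\cR_n$, and an exit point $y\in\partial B(x,r)$ whose cut exploration is good. Denote $T_y$ the tree produced by that cut exploration. Running the argument of Corollary~\ref{cor:almostmix} with starting point $y$ and $T_y$ as the coupling tree yields a sub-probability measure $\nu_{n,y}$ on $V_n$ with $\nu_{n,y}(x')\leq P_n^{t''_n}(y,x')\wedge \exp(K\sqrt{\log n})/n$ and $\nu_{n,y}(V_n)\geq 1-2\sqrt{\varepsilon}$. The only additional bookkeeping is that a small portion of $\cG_n$ (the ball $B(x,r)$) has already been revealed before the cut exploration starts; the good-context hypothesis and the mark placed at $\alpha(y,x)$ ensure that the sequential construction of the remaining half-edges via Lemma~\ref{lem:sequential} is not disturbed, and the cycle-counting estimates of Proposition~\ref{prop:goodexplo} still apply (they only deteriorate by an additive $o(1)$ term, easily absorbed into $\sqrt{\varepsilon}$).

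With $\nu_{n,y}$ in hand, the last-jump computation carried out just after Proposition~\ref{prop:cheeger} applies verbatim. Since $\nu_{n,y}(x')\leq\exp(K\sqrt{\log n})/n$ and $\pi_{n,\min}\geq \pi_{\min}/n$, the initial $L^2$ norm satisfies $\|\nu_{n,y}-\pi_n\|_{L^2(\pi_n)}\leq \exp(2K\sqrt{\log n})$. Proposition~\ref{prop:cheeger} combined with the uniform conductance bound of Proposition~\ref{prop:expansion} contracts this norm by a factor $(1-\kappa L^2)^{1/2}$ per step, so there is $D'>0$ (independent of $x$, $y$ and $n$) such that for $q=\lfloor D'\sqrt{\log n}\rfloor$ one obtains $\|\nu_{n,y}P_n^q-\pi_n\|_{L^2(\pi_n)}\leq\varepsilon$. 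Cauchy--Schwarz converts this into a total variation bound, and the truncation estimate already used in Section~\ref{subsubsec:lastjump} b) gives
\begin{equation*}
d_y\!\left(\lfloor t''_n+D'\sqrt{\log n}\rfloor\right)\;\leq\;2\sqrt{\varepsilon}+\|\nu_{n,y}P_n^q-\pi_n\|_{TV}\;\leq\;\varepsilon+2\sqrt{\varepsilon}.
\end{equation*}

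The mildly delicate point I expect is the uniformity in $y$ of the constants $K$ (from Corollary~\ref{cor:almostmix}) and $D'$ (from the $L^2$ decay): both depend only on $\cG$ and on $\varepsilon$, through Theorem~\ref{thm:weightofray} and through the uniform lower bound on the Cheeger constant, and not on the particular root or exit vertex. Consequently the displayed estimate is uniform over all $y$ in the good subset of $\partial B(x,r)$. Combining it with Proposition~\ref{prop:rootexit}, which guarantees on our event that the $\lambda_x$-mass of this good subset is at least $1-3\varepsilon$ for every $x\in\cR_n$, yields the corollary.
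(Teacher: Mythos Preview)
Your proposal is correct and follows exactly the route the paper intends: the paper's own ``proof'' is the single sentence ``From this and the conclusions of \ref{subsubsec:lastjump} b), we deduce the following,'' and you have simply spelled out how Proposition~\ref{prop:rootexit} feeds into the almost-mixing/$L^2$-contraction machinery of Section~\ref{subsubsec:lastjump} b), together with the uniformity of $K$ and $D'$ in the choice of exit vertex.
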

\noindent
\begin{proof}[Proof of Theorem \ref{thm:bornsup}]
Let $T_n:=t''_n+D'\sqrt{log(n)}+cr+2s^{-1}r$. For all vertices $x,y\in V_n$, $P_n^{T_n}(x,y)=\sum_{x'\in V_n}P_n^{cr}(x,x')P_n^{T_n-cr}(x',y)$, so that the distance to equilibrium starting from $x$ verifies
$$2d_x(T_n)=\sum_{y\in V_n}\vert\sum_{x'\in V_n}P_n^{cr}(x,x')P_n^{T_n-cr}(x',y) -\pi_n(y)\vert \leq \sum_{x'\in V_n}P_n^{cr}(x,x')\Vert P_n^{T_n-cr}(x',\cdot)- \pi_n\Vert_{_1}$$
by the triangle inequality. Thus by Proposition \ref{prop:rootquickRW}, w.h.p. as $n\rightarrow +\infty$, $\cG_n$ is such that for all $x\in V_n$, 
\begin{equation}\label{eq:mixfromroot}
d_x(T_n)\leq \varepsilon + \sup_{x'\in \mathcal{R}_n}d_{x'}(T_n-cr).
\end{equation}
\\
For all $x'\in \mathcal{R}_n$, again by the triangle inequality,
$$d_{x'}(T_n-cr)\leq \sum_{y\in \partial B(x',r)}\left( \sum_{\ell=0}^{2s^{-1}r}P_n^{\ell}(x',y)d_z(T_n-cr-\ell) \right)+\dP(\tau_e(x')>2s^{-1}r),$$
where $\tau_e(x')$ is the hitting time of $\partial B(x',r)$ by a RW started at $x'$. But $t\mapsto d_y(t)$ is a non-increasing function for all $y\in V_n$, so that 
$$d_{x'}(T_n-cr)\leq \dP(\tau_e(x')>2s^{-1}r)+\sum_{y\in \partial B(x',r)}\lambda_{x'}(z)d_y(t''_n+D'\sqrt{\log n}).$$
\noindent
By Corollary \ref{cor:rootexit}, $\sum_{y\in \partial B(x',r)}\lambda_{x'}(y)d_y(t''_n+D'\sqrt{\log n})\leq 3\varepsilon+(\varepsilon+2\sqrt{\varepsilon})$, and by Proposition \ref{prop:speed}, for $n$ large enough, $\dP(\tau_e(x')>2s^{-1}r)\leq \varepsilon$ for all $x'\in \mathcal{R}_n$. Combining this with (\ref{eq:mixfromroot}), we obtain that w.h.p., $\cG_n$ is such that
$$\sup_{x\in V_n} d_x(T_n)\leq 5\varepsilon +2\sqrt{\varepsilon}.$$
This concludes the proof of Theorem \ref{thm:bornsup}, in the case when \ref{assump3} and \ref{assump4} hold. 
\end{proof}

\section{Relaxing the assumptions}\label{sec:general}
We now establish that assumptions \ref{assump3} and \ref{assump4} are not necessary for Proposition \ref{prop:borninf} and Theorem \ref{thm:bornsup}.

\subsection{Getting rid of \ref{assump3}}\label{subsec:dropA3}
Without loss of generality, we can assume that each edge admits at least one orientation with positive weight. Introduce the weaker assumption
\begin{enumerate}[label=\textbf{A.3*}]
\item \label{assump3*} At least one edge has both orientations with positive weight.
\end{enumerate}
\noindent
Hence, we suppose that at least one edge has exactly one orientation $\e$ with positive weight. In this case, the RW on $(\cT_{\cG}\circ)$ is not irreducible any more. However, due to Lemma \ref{lem:projectionUnivCov}, the RW crosses an oriented edge with label $\e$ after a time $T$ which is stochastically dominated by a geometrical random variable of parameter $p\in (0,1)$ depending on $\cG$. After this crossing, since $\cT_{\cG}$ is a tree, the RW can never come back to its starting point. Hence, it is transient. Moreover, assumptions \ref{assump1} and \ref{assump2} are enough to imply that the RW can reach every isomorphism class of subtrees, independently of the choice of $\circ$, so all constants in Section \ref{sec:univcover} can be made independent of $v_*$ and $\e_*$.
\\
As a consequence, all results of Section \ref{sec:univcover} hold, with the following exceptions:

\begin{itemize}
\item if \ref{assump3*} is not verified, Proposition \ref{prop:speed} becomes $he(\cX_t)=t$ a.s. for all $t$ (hence $s=1$),

\item if \ref{assump3*} is not verified and $\cT_{\cG}$ satisfies the cylindrical symmetry defined in Corollary \ref{cor:weightofray}, Theorem \ref{thm:weightofray} becomes $\sup_{t\in \dN} \vert W_t-h_{\cT_{\cG}}t\vert \leq K_{\cT_{\cG}}$ for some constant $K_{\cT_{\cG}}$ only depending on $\cG$. 
\end{itemize}
This implies straightforwardly Proposition \ref{prop:borninf}.
One checks readily that the reasoning of Sections \ref{subsubsec:coupling} a), \ref{subsubsec:lastjump} b) and \ref{subsubsec:allstartpts} c) is still true, so that Theorem \ref{thm:bornsup} holds.
\\
For Proposition \ref{prop:expansion} in particular, the original proof of Amit and Linial for simple non-directed graphs is based on an argument that only requires \ref{assump1} and \ref{assump2}: pick $V'\subset V_n$, that contains say $k$ vertices of a given type $u$. Those vertices lead to $k$ vertices of type $v$ by irreducibility of the RW associated to $\cG$. Hence if $V'$ has less than $k(1-\varepsilon)$ vertices of type $v$, $W(V',V'^c)/\pi_n(V')\gtrsim \varepsilon$, and we can suppose that every type is represented in $V'$ 'almost' in the same proportion as the others. By \ref{assump2}, there exist two cycles $C_1$ and $C_2\neq C_1^{-1}$ of $\cG$ such that from each of those $k$ vertices of type $u$, we can go along two trajectories featuring the types of $C_1$ and $C_2$ respectively. If we want $W(V',V'^c)/\pi_n(V')\lesssim \varepsilon$, then at least $(1-\varepsilon)k$ of the $k$ $C_1$-like (resp. $C_2$-like) trajectories should end in those $k$ vertices of type $u$. Denote $E_1$ (resp. $E_2$) this event. Remark that we can suppose that $C_1$ and $C_2$ don't lie on the same set of non-oriented edges, so that $E_1$ and $E_2$ are independent, and the probability that $E_1$ happens is the probability that a uniform permutation of $\{1, \ldots, n\}$ sends at least $k(1-\varepsilon)$ elements of $\{1, \ldots, k\}$ in $\{1, \ldots, k\}$. An estimation of this quantity via a union bound on the choice of the $k(1-\varepsilon)$ elements and Stirling's formula, and a union bound over all possible subsets $V'\subset V_n$ of cardinality at most $V_n/2$ finishes the proof.
\\
Note also that one might define the distance between $x$ and $y$ as the length of the shortest non-oriented path between $x$ and $y$ (and define accordingly $B(x,R)$ and $\partial B(x,r)$): the fact that $B(x,R)$ contains more vertices than\\
 $\tilde{B}(x,R):=\{y\vert \, \text{there is an oriented path of length at most $R$ from $x$ to $y$}\}$ does not change anything to our reasoning (since $c^Rb\leq \vert \tilde{B}(x,R)\vert \leq \vert B(x,R)\vert \leq c'^Rb'$ for some fixed $b,b',c,c' >0$ and all $R\geq 1$).

\subsection{Getting rid of \ref{assump4}}
As in Lemma \ref{lem:core}, one can decompose $\cG$ into a core $c(\cG)$ verifying \ref{assump4} and "branches" planted on this core. A similar decomposition holds for $\cG_n$, as a $n$-lift $c(\cG)_n$ of $c(\cG)$ and branches isomorphic to those of $c(\cG)$. By Lemmas \ref{lem:projection} and \ref{lem:cltmarkov}, after $t_0$ steps of a RW $(X_t)_{t\geq 0}$ in $\cG_n$, the number $n(t_0)$ of steps in $c(\cG)_n$ follows a CLT: $(n(t_0)-\mathfrak{a}t_0)/\sqrt{t_0}\rightarrow  \mathcal{N}(0,\mathfrak{b}^2)$ for some constants $\mathfrak{a},\mathfrak{b}$ depending on $\cG$ only. Moreover, the trajectory of $(X_t)$ on $c(\cG)_n$ is a RW associated to $c(\cG)_n$ by the strong Markov property, so that Theorem \ref{thm:bornsup} holds ($h$ is replaced by $h\mathfrak{a}$). As for Proposition \ref{prop:borninf}, the excursion theory presented in Section \ref{subsubsec:cltunivcov} is still true, and so is Proposition \ref{prop:borninf}.

\section{Appendix 1: computing $h=h_{\cT_{\cG}}$ and $s$}\label{sec:comput}
In general, computing exact values for $h_{\cT_{\cG}}$ and $s$ is a difficult problem.
Nagnibeda and Woess \cite{nagniwoess} give two formulas for $s$ depending on Green functions. The first one (equation $(5.8)$) writes
$$s^{-1}=\sum_{\e\in \E} \hat{\pi}(\e)\frac{F_{\e^{-1}}(1)}{w(\e^{-1})(1-F_{\e^{-1}}(1))},$$ 
where $\hat{\pi}$ is the invariant distribution of the NBRW associated to $\widehat{\cG}$, and 
$$F_{\e}(z)=F_{x,y}(z):=\sum_{n\geq 0} \dP(\cX_n=y\text{ and } \not\exists k<n, \, \cX_k=y\vert \cX_0=x)z^n, $$
for all $z\in \dC$ and $x,y\in V_{\cT_{\cG}}$ such that the oriented edge $(x,y)$ exists and has label $\e$, is the "first passage" Green function. Note that the series $F_{\e}$ has a positive radius of convergence.
\\
The functions $(F_{\e})_{\e\in \E}$ verify a non-linear system of equations given by
$$F_{\e^{-1}}(z)= zw(\e^{-1})+z\sum_{\e'\leftarrow \e} w(\e')F_{\e'^{-1}}(z)F_{\e^{-1}}(z)$$
for all $\e\in \E$ (Proposition 2.5 in \cite{nagniwoess}), where $\e'\leftarrow \e$ means that the end vertex of $\e$ is the initial vertex of $e'$. One can establish this by decomposing the trajectory of a RW $(\cX_t)_{t\geq 0}$ as follows: if $u,v$ are such that $(x,y)$ has label $\e$, and if $X_t=y$, then either $X_{t+1}=x$, or $\cX_{t+1}=y'$ for some $y'\in \cT_{\cG}\setminus\{x\}$, which happens with probability $w(\e')$ where $\e'$ is the label of $(y,y')$. Now, $(X_t)$ can come back to $x$ in $k$ steps only by reaching $y$ in $k'\leq k-1$ steps and then reaching $x$ in $k-k'$ steps. 
\\
Letting $q(x,y):=F_{(x,y)}(1)$ the probability that $(\cX_t)$ reaches $y$ at least once if it starts at $x$, the transition matrix $\widehat{Q}$ of the NBRW on $\cT_{\widehat{\cG}}$ verifies 
$$\widehat{Q}(\e,\e')=\frac{w(\e')(1-q(\e'^{-1}))}{\sum_{\e''\leftarrow \e, \\ \e''\neq \e}w(\e'')(1-q(\e''^{-1}))}.$$ 
Indeed, if $(x,y)$ and $(y,y')$ have respective labels $\e$ and $\e'$, if $(X_t)$ starts at $y$, in order to leave to infinity through $y'$:
\begin{itemize}
\item either it goes through $(y,y')$ and never returns back from $w$ to $v$ (this has probability $w(\e')(1-q(\e'^{-1}))$), 
\item or it goes through any $(y,y'')$, comes back to $y$, and will leave to infinity through $y'$ (this has probability $\sum_{\e''\leftarrow \e}w(\e'')q(\e''^{-1})\hat{w}(\e')$.
\end{itemize} 
Recall that $\hat{w}(\e')$ is the probability that $(X_t)$ leaves to infinity through $y$. Hence, we obtain that $\hat{w}(\e')= w(\e')(1-q(\e'^{-1}))+\sum_{\e''\leftarrow \e}w(\e'')q(\e''^{-1})\hat{w}(\e')$, and $\widehat{Q}(\e,\e')=\frac{\hat{w}(\e')}{1-\hat{w}(\e^{-1})}$, from which we derive the formula. Note that $\sum_{\e''\leftarrow \e}w(\e'')q(\e''^{-1})$ since $q(\overrightarrow{g})<1$ for all $\overrightarrow{g}\in \E$. It then remains to compute the unique invariant probability measure of $\widehat{Q}$.
\\
Knowing $(F_{\e}(1))_{\e\in \E}$ and $\hat{\pi}$, one can compute $s$. However, those quantities are the solutions of non-linear systems of equations, for which no explicit general solutions have been found. Even for the seemingly simple case where $\cG$ has only one oriented cycle (and the inverse cycle), continuous fractions are involved to derive non simple expressions for the $F_{\e}$'s in \cite{Woess85}. The second formula for the speed in \cite{nagniwoess} is derived from a powerful theorem in \cite{Sawyer1987}, and involves again those series. For the SRW on periodic trees, Takacs obtains similar equations and gives explicit solutions for three examples (examples 4.7, 4.8 and 4.10 in \cite{Takacs97}). 
\\
$h_{W}$ can be expressed simply in terms of $\widehat{Q}$ and $\hat{\pi}$, namely
\begin{equation}\label{eq:entropyweight}
h_W=\sum_{\e\in \E}\hat{\pi}(\e)(\log(1-\hat{w}(\e))-\log(\hat{w}(\e)),
\end{equation}
hence once one manages to compute $s$, computing $h_{\cT_{\cG}}$ is straightforward (recall that $h_{\cT_{\cG}}=sh_W$).
\\
Gilch \cite{Gilch16} studies transient random walks in the more general context of general languages, and obtains a LLN: 
\begin{equation}\label{eq:entropygilch}
-\log(\cPT^t(\cX_0,\cX_t))/t \overset{a.s.}{\rightarrow} h'
\end{equation}
for some positive $h'$  if the random walk is transient. He proves that $h'$ is an analytic function of the weights in $\cG$ and discusses the possibilities to compute the entropy, and obtains a formula that reduces to (\ref{eq:linkentropies}) in our particular context: $h'$ is given as the product of three factors, $h'=\lambda^{-1}\ell he(Y)$ (Theorem 2.5), where in our particular setting, $\lambda=1$ is the expected distance between $\cX_{\theta_i}$ and $\cX_{\theta_{i+1}}$, $he(Y)=h_{W}$ and $\ell=s$ is the rate of escape, whose computation in \cite{GilchSpeed} is the equivalent of that of \cite{nagniwoess} for regular languages.
\\
Note that in our setting, (\ref{eq:entropygilch}) can be deduced from Kingman's subadditive ergodic theorem: $\cPT^t(\cX_0,\cX_t)\geq\widetilde{ P}^s(\cX_0,\cX_s)\cPT^{t-s}(\cX_s,\cX_t)$ a.s. for all $0\leq s\leq t$. Moreover, if the label of $\cX_0$ is distributed according to $\pi$ (hence, so is the label of $\cX_r$ for all $r\in \dN$), the random variables $\cPT^{t-s}(\cX_s,\cX_t)$ and $\cPT^{t-s}(\cX_0,\cX_{t-s})$ have the same distribution $d_{t-s}$ that only depends on $t-s$, and we can apply Kingman's theorem to obtain (\ref{eq:entropygilch}). Since $\pi$ is strictly positive on all labels and the convergence is a.s., the result must hold if the label of $\cX_0$ is chosen arbitrarily. Note that this does not imply that $h'>0$.

\begin{remark}
Comparing the formula in \cite{Gilch16} and (\ref{eq:linkentropies}), we get that $h'=h_{\cT_{\cG}}$. However, one can come to this equality without using the results of \cite{Gilch16}: the fact that $h_{\cT_{\cG}}\leq h'$ is a consequence from Proposition \ref{cor:sticktoray}. Indeed, for all $\varepsilon>0$ w.h.p. as $t$ goes to infinity, $\cPT^t(\cX_0,\cX_t)\geq \exp(-(h'+\epsilon)t)$, and $(\cX_n)_{n\geq t}$ has a probability $o(1)$ to visit the $\log(\log(t))$-ancestor of $\cX_t$ (the $o(1)$ is uniform conditionally on $\cX_t$, by Proposition \ref{cor:sticktoray}), so that w.h.p., this ancestor and $\cX_t$ itself have a probability at least $\exp(-(h'+2\varepsilon)t)$ to be in the ray to infinity, so that $h_{\cT_{\cG}}\leq h'+2\varepsilon$. 
\\
Conversely, for all $t\geq 0$, there exists a set $S_t$ of at least $\exp((h'-\varepsilon)t)$ vertices, such that for all $x\in S_t$, $\exp(-(h'+\varepsilon)t)\leq \cPT^t(\cX_0,x)\leq \exp(-(h'-\varepsilon)t)$ and $st-\varepsilon t^{2/3}\leq he(x)\leq st+\varepsilon t^{2/3}$, and such that $X_t\in S_t$ w.h.p. The fact that the vertices of $S_t$ are localized in less than $3\varepsilon t^{2/3}$ levels implies that one can write $S_t=\sqcup_{i=1}^m S'_m$ for some $m\geq \vert S_t\vert /\Delta^{4\varepsilon t^{2/3}}\geq \exp((h'-2\varepsilon)t)$ for $t$ large enough, so that for all $i\leq m$, there exists $x_i\in S'_i$ such that:
\\
a) $S'_i\setminus\{x_i\}$ is in the offspring of $x_i$, hence $x_i$ maximizes $W(x)$ for $x\in S'_i$, which implies that $W(x_i)\geq \exp(-(h_{\cT_{\cG}}-\varepsilon)t)$ for all $i$, and that $\sum_{1\leq i \leq m}W(x_i)\geq m \exp(-(h_{\cT_{\cG}}-\varepsilon)t)$, and
\\
b) $x_i$ is in the offspring of no other $x_j$, so that $\sum_{1\leq i \leq m} W(x_i)\leq 1$.
\\
From this, we deduce that $h_{\cT_{\cG}}\geq h'-3\varepsilon$. Further details are left to the reader.
\end{remark}

\section{Appendix 2: is laziness necessary?}\label{appendix2}
It is easily checked that the fact that $\alpha >0$ is not necessary outside the proof of Proposition \ref{prop:cheeger}, so that the rest of our reasoning still holds for $\alpha =0$ with minor changes (for instance, one might have $\sigma=0$ in Proposition \ref{prop:borninf} if $\cG$ does not verify \ref{assump3*} and if $\cT_{\cG}$ has a cylindrical symmetry, and the RW on $\cG$ might have a period $d>1$, in which case one should look at the RW $(X_t)_{t\geq 0}$ at times $\{t=kd+r, \, k\in \dN\}$, for each residue $r$ modulo $d$, details are left to the reader). 
\\
As mentioned in the example of a $d$-regular graph (Section \ref{sec:examples}), we can prove that there is a cutoff for $\alpha =0$ with mixing time $t_{mix} =h_{0}^{-1}\log n=(2h)^{-1}\log n$: the lower bound of Proposition \ref{prop:borninf} holds, and the upper bound comes from Theorem \ref{thm:bornsup}, taking $\alpha \rightarrow 0$, so that $h_{\alpha}\rightarrow h_0=h/2$ according to (\ref{eq:newentropy}). However, we do not know any more if the cutoff window is still of order $\sqrt{\log n}$.
\\
A sufficient condition to guarantee that the results of \cite{Montetali} required for the proof of Proposition \ref{prop:cheeger} hold would be that there exists $c>0$ such that for all $n$ large enough, 
\begin{equation}\label{eq:spectral}
\inf_{f, Var_{\pi_n}(f)=1}\mathcal{E}_{P_n^*P_n}(f,f) \geq c\inf_{f,Var_{\pi_n}(f)=1} \mathcal{E}_{P_n}(f,f)
\end{equation}
where $P_n^*(x,y):=\frac{\pi_n(y)}{\pi_n(x)}P_n(y,x) $ and $\mathcal{E}_{P_n}(f,f):=\frac{1}{2}\sum_{x,y\in V_n}\left(f(x)-f(y)\right)^2P_n(x,y)\pi_n(x)$ and $\mathcal{E}_{P_n^*P_n}$ is defined analogously (note that $\pi_n$ is invariant for $P_n$ and $P_n^*P_n$).
\\
This is clearly true for $\alpha >0$: for all $x,y\in V_n$, $P_n^*P_n(x,y)\geq P_n^*(x,y)P_n(x,x)\geq \alpha P_n^*(x,y)$, hence $\mathcal{E}_{P_n^*P_n}(f,f)\geq \alpha \mathcal{E}_{P_n}(f,f)$ for all $f$ (note that $\mathcal{E}_{P_n}(f,f)=\mathcal{E}_{P_n^*}(f,f)$ for all $f$), and we are done.

\bibliographystyle{plain}
\bibliography{cutofflifts}
\end{document}